\newtheorem{theorem}{Theorem}[section]
\newtheorem{corollary}[theorem]{Corollary}
 \newtheorem{lemma}[theorem]{Lemma}
 \newtheorem{proposition}[theorem]{Proposition}
 \theoremstyle{definition}
 \newtheorem{definition}[theorem]{Definition}
 \theoremstyle{remark}
 \newtheorem{remark}[theorem]{Remark}
 \newtheorem{example}[theorem]{Example}
 \numberwithin{equation}{subsection}
\newcommand{\bz}{\mathbb Z}
\newcommand{\bk}{\mathbb K}
\newcommand{\bq}{\mathbb Q}
\newcommand{\sgn }{\text{sgn}}
    \newcommand{\lasu}{{\mathfrak{L}}}
\newcommand{\ad }{\text{ad}}
\newcommand{\im}{\text{Im}\,}
 \newcommand{\dalg}{d}
 \newcommand{\dlie}{\partial }
 \newcommand{\lib }{\mathbb{L}}
 \newcommand{\cyl}{\operatorname{\textrm{Cyl}\,}}
 \newcommand{\cinfinito}{\operatorname{{\mathcal C}^\infty}}
  \newcommand{\csubinfinito}{\operatorname{{\mathcal C}_\infty}}
\newcommand{\catcdga}{\operatorname{{\bf CDGA}}}
\newcommand{\catcono}{\operatorname{{\bf DGC_\infty}}}
\newcommand{\catco}{\operatorname{{\bf CDGC_\infty}}}
\newcommand{\catdga}{\operatorname{{\bf DGA}}}
\newcommand{\catdgl}{\operatorname{{\bf DGL}}}
\newcommand{\cobarinf}{\operatorname{{{\rm cobar}_\infty}}}
\newcommand{\catset}{\operatorname{{\bf Set}}}
\newcommand{\catsimpset}{\operatorname{{\bf SSet}}}
\newcommand{\catlinfinito}{\operatorname{{\bf L_\infty}}}
\newcommand{\aug}{\operatorname{\textit{Aug}\,}}
 \newcommand{\pl }{1}
 \newcommand{\lasumuno}{ \mathfrak{M}^{a_0}}
 \newcommand{\lasumdos}{ \mathfrak{M}^{a_1}}
 \newcommand{\lasum}{ \mathfrak{M}}
 \newcommand{\partialuno}{ \partial^{a_0}}
 \newcommand{\partialdos}{ \partial^{a_1}}
 \newcommand{\MC}{\operatorname{{\rm MC}}}
\newcommand{\mc}{{\MC}}
\newcommand{\basum}{ \mathfrak{B}}
\begin{document}
\title{Algebraic models of non-connected spaces and homotopy theory of $L_\infty$ algebras}
%about the article that should go on the front page should be
%placed here. General acknowledgments should be placed at the end of the article.}

%\titlerunning{Short form of title}        % if too long for running head

\author{Urtzi Buijs\footnote{Partially supported by the \emph{Ministerio de Educaci\'on y
Ciencia} grant MTM2010-15831, by the  grants FQM-213, 2009-SGR-119, and by the U-mobility grant 246550 of the European Union Seventh Framework Program.} $$ and Aniceto Murillo\footnote{Partially supported by the \emph{Ministerio de Educaci\'on y
Ciencia}  grant MTM2010-18089 and by the Junta de
Andaluc\'\i a grants FQM-213 and P07-FQM-2863. \vskip 1pt 2000 Mathematics Subject
Classification: 55P62.\vskip
 1pt
 Key words and phrases: Rational homotopy theory. Algebraic models of non-connected spaces. $L_\infty$-algebras. Maurer-Cartan set.}}

%\author[U. Buijs]{Urtzi Buijs}
%\address{Departament d'\`Algebra i Geometria, Universitat de Barcelona.
%Gran Via de les Corts Catalanes, 585, 08007 Barcelona, Spain}
%\email{ubuijs@ub.edu}
%\author[A. Murillo]{Aniceto Murillo}
%\address{Departamento de \'Algebra, Geometr\'{\i}a y Topolog\'{\i}a, Universidad de M\'alaga, Ap.\ 59, 29080 M\'alaga, Spain}
%\email{aniceto@agt.cie.uma.es}
%\authorrunning{Short form of author list} % if too long for running head

%\thanks{The first-named author was supported by the MEC-FEDER grant MTM2010-15831 . The second-named author was supported by the MEC-FEDER grant MTM2010-18089}
%\keywords{Rational homotopy; Lawrence-Sullivan interval; gauge
%relation} \subjclass[2010]{Primary:55P62; Secondary: 54C35}

%\date{\today}
% The correct dates will be entered by the editor

\maketitle

\begin{abstract}
We develop a homotopy theory of $L_\infty$ algebras based on the Law-rence-Sullivan construction, a complete differential graded Lie algebra which, as we show, satisfies the necessary properties  to become the right cylinder in this category. As a result, we obtain a general procedure to algebraically model the rational homotopy type of non-connected spaces.
\end{abstract}

\section*{Introduction}
It has been known for a long time, probably since the work of Gerstenhaber \cite{gers} in the first case and that of Quillen \cite{qui} in the second, that deformation functors on associative algebras and rational homotopy types of spaces  are governed by differential graded Lie algebras together with solutions of the classical Maurer-Cartan equation modulo gauge equivalence. Also, in this two contexts, $L_\infty$ algebras are proven to be the right objects to attack certain problems in which the rigidity of classical differential graded Lie algebras was an obstacle.

However, the available closed model structures on the categories of unbounded differential graded Lie algebras or $L_\infty$ algebras no longer fully reflect the homotopy properties of their realizations.

In this paper, having as goal shaping algebraically the rational homotopy type of non-connected spaces, we develop a precise and functorial  homotopy theory of $L_\infty$ algebras based   on a particular complete differential graded Lie algebra, namely the {\it Lawrence-Sullivan construction} \cite{lasu}.

 To this end we prove the following, which may also be of independent interest: first, we ``approximate'' the linear dual  of the standard acyclic algebra, model of a point, $\Lambda(t,dt)$, by a cocommutative differential graded coalgebra ${\mathfrak B}$; see Definition \ref{universal2}. Next,  we consider a simple acyclic differential vector space   $V=\langle y,z,c\rangle$ with $c$ of degree one, $y$ and $z$ of degree zero and $dc=y-z$. Then, we prove that $(V,d)$ is a deformation retract of $({\mathfrak B},\delta)$
$$
\xymatrix{ V\ar@{^{(}->}[r]& {\mathfrak B}\ar@(ur,dr) \ar@<0.75ex>[l] }
$$
 in such a way that the inherited $A_\infty$ coalgebra structure on $V$, via the {\em classical perturbation theorem}  known nowadays as the {\em homotopy transfer theorem}, or equivalently, the inherited differential graded algebra structure on $\widehat T(s^{-1}V)$ is precisely the universal enveloping algebra of the Lawrence-Sullivan construction (see Theorem \ref{principal2} for a precise statement).

 This is crucial to show later on that the Lawrence-Sullivan construction is in fact   the right cylinder  to develop  the homotopy relation in Maurer-Cartan elements of $L_\infty$ algebras. We immediately prove afterwards that  this notion coincides with the several equivalent ones, broadly used by experts nowadays,  both in  rational homotopy and deformation theory.

  %Another crucial and controversial point is the homotopy invariance of %the localization of $L_\infty$ algebras at a given Maurer-Cartan %element. Based in geometrical facts we show that perturbed $L_\infty$ %algebras or even differential graded Lie algebras by homotopic %Maurer-Cartan elements are not isomorphic in general, and only the %homotopy type of their realization is preserved.

    As we said, we are able to use this detailed theory to present a procedure to  construct  algebraic models which fully describe the rational homotopy of non-connected spaces. Starting with an arbitrary  family of nilpotent spaces of finite type and some choice for their algebraic models, we are able to glue them together and obtain a differential graded Lie algebra whose classical geometric realization via simplicial cochains, is precisely the rationalization of a non-connected space having the original family as components (see Theorem \ref{modelo}).

    As a result of this we see, for instance, that, as expected from its functorial properties and  its fundamental role as a cylinder of this theory, the Lawrence-Sullivan construction is a model of the disjoint union of the interval and an external point. This procedure may also reveal, at least up to homotopy, the geometry reflected by the generalized Quillen construction of the $A_\infty$ coalgebra structure on the chains of a cell complex given in  \cite[Appendix]{tradzeisu} or \cite{lasu}.

      The paper is organized as follows. The first section contains the fundamental definitions and basic facts  on $L_\infty$ algebras and their geometric realization functor. In Section $2$ we  present in detail, and with the necessary restrictions, the one-to-one correspondence between the Maurer-Cartan set of a given $L_\infty$ algebra $L$ with  based maps from the zero sphere $S^0$ into the realization of $L$, that is, with points of this space. We also show that Maurer-Cartan elements are preserved in the standard way, and it becomes a functor to the category of sets, only with the appropriate restrictions.
      In Section $3$ we introduce the Lawrence-Sullivan construction, present its functorial properties and prove that it can be obtained as an inherited infinity structure from the standard acyclic coalgebra. We use this construction as a cylinder to introduce the homotopy relation of Maurer-Cartan elements of a given $L_\infty$ algebra in Section $4$. We see that this is the natural link to unify different approaches to homotopy in this setting. Section $5$ is devoted to localize componentwise, both algebraically and geometrically, a given $L_\infty$ algebra, and to precise the homotopy invariance of this procedure. Finally, in Section $6$ we present the mentioned algorithm to obtain models of the rational homotopy type of non-connected spaces.

      We finish by remarking that, throughout this article,  commonly believed and widely used extensions of classical geometrical properties of Lie algebras to $L_\infty$ algebras are  true only when some restrictions (quite severe in some cases) are imposed. For instance, and independently from finiteness and or bounding assumptions, ``mildness conditions" (see next section for a precise definition) on  $L_\infty$ algebras and their morphisms are required to functorially  define their cochain algebras, even if one wants to consider complete cochains or limit of cochains in a good chosen filtration of the $L_\infty$ algebra. Also, to build a consistent theory, the set of Maurer-Cartan elements should be preserved by morphisms in the standard way. However, we see that this is the case only when technical finiteness restrictions are assumed.  The same kind of considerations and restrictions apply to the broadly extended principle by which Maurer-Cartan elements of a given $L_\infty$ algebra $L$ are in bijective correspondence with the augmentations of the cochain algebra of $L$.

\medskip

\noindent{\bf Acknowledgements.} We thank the referee, not only for for his/her priceless suggestions and corrections which have substantially improved both the content and the presentation of the paper, but also, for his/her  endless patience with the authors during the review process. We also thank Prof. Jim Stasheff and Prof. Vladimir Dotsenko for  his kind comments and his valuable  suggestions on the first manuscript of this work.

\section{Algebraic models of $L_\infty$ structures}

With the aim of fixing
notation, we give in this section some definitions and sketch some results we will
need on  $L_\infty$ structures, particularly the ones coming from their interaction with rational homotopy theory. For the latter, \cite{fehatho} is a standard reference while a good geometric and homotopy oriented introduction to $L_\infty$ algebras can be found in  \cite{getz,kon}.

We begin by setting some general assumptions. Abusing notation, we will denote  in the same way a given category, always written in bold face, and the class of its objects. The coefficient field for any algebraic object $\bk$ is assumed to be of characteristic zero. Any graded object is considered $\bz$-graded unless explicitly specified otherwise. The degree of a homogeneous element $x$ in such an object will be denoted by $|x|$.  No finite type assumptions will be made throughout the paper. Thus, whenever a basis $\{v_i\}_{i\in I}$ of a homogeneous vector space $V$ is fixed, a  vector of the dual $V^\sharp$ will be written as a formal series $\sum_{i\in I}\lambda_iv_i^\sharp$ representing the map $\{v_i\}_{i\in I}\to\bk$ which sends each $v_i$ to $\lambda_i$.

Our graded coalgebras are coassociative, generally cocommutative, but are not assumed to have a counit neither to be coaugmented. It is known however, that the functor $C\mapsto \ker \varepsilon$ which assigns to each coaugmented graded coalgebra the kernel of the coaugmentation $\varepsilon\colon C\to\bk$ provides an equivalence between the categories of coaugmented graded coalgebras and graded coalgebras. The inverse functor sends $(C,\Delta)$ to the augmented graded coalgebra $(\bk u\oplus C,\Delta')$ in which $u$ is a degree zero element, $\Delta'u=u\otimes u$ and $\Delta'c=u\otimes c+c\otimes u+\Delta c$. We will use this equivalence whenever we need to coaugment any given graded coalgebra.
On the other hand, our graded algebras are assumed to have a unit which is preserved by morphisms.

       An {\em $L_\infty$ algebra} structure on a graded vector space $L$, denoted sometimes by $(L, \{\ell_k\}_{k\ge 1})$, is a collection of linear maps, called brackets, $\ell_k$, $k\ge 1$, of degree $k-2$
       $$
       \ell_k=[\,,...\,,\,]\colon \otimes^kL\to L,
       $$
 which satisfy:

       (1) $\ell_k$ are graded skew-symmetric, i.e., for any $k$-permutation $\sigma$,
       $$
       [x_{\sigma(1)},...\,,x_{\sigma (k)}]=\sgn(\sigma)\varepsilon_\sigma[x_1,...\,,x_k],
       $$
       where $\varepsilon_\sigma$ is the sign given by the Koszul convention.

       (2) The following generalized Jacobi identities hold:
       $$
       \sum_{i=1}^{n}\sum_{\sigma\in S(i,n-i)}\sgn(\sigma)\varepsilon_\sigma(-1)^{i(n-i)}\bigl[[x_{\sigma(1)},...\,,x_{\sigma(i)}],x_{\sigma(i+1)},...\,,x_{\sigma(n)}\bigr]=0.
       $$
       By $S(i,n-i)$ we denote the $(i,n-i)$ {\em shuffles} whose elements are permutations $\sigma$ such that $\sigma(1)<...<\sigma(i)$ and
$\sigma(i+1)<...<\sigma(n)$.

 Recall that $L_\infty$ structures in $L$ are in one-to-one correspondence with codifferentials on the non coaugmented cofree cocommutative coalgebra $\Lambda^+ sL$ cogenerated by $sL$, in which $s$ denotes suspension, i.e., $(sL)_k=L_{k-1}$. Indeed, a codifferential $\delta$ on $\Lambda^+ sL$ is determined by a degree
$-1$ linear map $ \Lambda^+ sL\to sL$ which is written as the sum of
linear maps $\delta^{(k)}\colon\Lambda^ksL\to sL$, $k\ge 1$. Then the operators $\{\ell_k\}_{k\ge 1}$ on $L$ and the codifferential $\delta$ on $\Lambda sL$ are uniquely determined by each other via
$$
\begin{aligned}
\ell_k&=s^{-1}\circ \delta^{(k)}\circ s^{\otimes k}\colon \otimes^k L\to
L,\\
\delta^{(k)}&=(-1)^{\frac{k(k-1)}{2}}s\circ\ell_k\circ(s^{-1})^{\otimes k}\colon\Lambda^ksL\to sL.
\end{aligned}
$$
As  $\ell_1$ is simply a differential on $L$, we often refer to it as $\partial$.

Note that, via the equivalence above, we may identify the cocommutative differential graded coalgebra, CDGC henceforth, $(\Lambda^+ sL, \delta)$ with the  CDGC $(\Lambda sL,\delta)$, naturally coaugmented by the new $\delta$-cycle $1\in \Lambda sL$.
In what follows, we denote by $\mathcal{C}_\infty (L)=(\Lambda sL, \delta)$ this coaugmented CDGC corresponding to the $L_\infty$ structure on $L$.

       Given two $L_\infty$ algebras $L$ and $L'$, a {\em morphism of $L_\infty$
algebras}  or an {\em $L_\infty$ morphism}  is a CDGC morphism,
$$f\colon \mathcal{C}_\infty (L)=(\Lambda sL,\delta)\to(\Lambda sL',\delta')=\mathcal{C}_\infty (L').
$$
Observe that $f$ is determined by  $\pi
f\colon \Lambda sL\to sL'$ ($\pi$ denotes the projection) which can be written as
$ \sum_{k\ge 1}(\pi f)^{(k)}$, where $(\pi f)^{(k)}\colon \Lambda^ksL\to sL'$. Note that, as before, the collection of linear maps $\{(\pi f)^{(k)}\}_{k\ge 1}$ is in one to one correspondence with a system $\{f^{(k)}\}_{k\ge1}$ of
skew-symmetric maps of degree $1-k$, where $f^{(k)}\colon \otimes^k L\to L'$. Indeed each $f^{(k)}$ is uniquely determined by $(\pi f)^{(k)}$ as follows:
$$
\begin{aligned}
f^{(k)}&=s^{-1}\circ (\pi f)^{(k)}\circ s^{\otimes k},\\
(\pi f)^{(k)}&=(-1)^{\frac{k(k-1)}{2}}s\circ f^{(k)}\circ (s^{-1})^{\otimes k}.\\
\end{aligned}
$$
Observe that, as in the relation between $\ell_k$ and $\delta^{(k)}$, the sign $(-1)^{\frac{k(k-1)}{2}}$ is needed to have the above equalities.

The fact that $f$ is a CDGC morphism is equivalent to state that the system $\{f^{(k)}\}_{k\ge1}$ satisfies  an infinite sequence of equations
involving the brackets $\ell_k$ and $\ell'_k$, $k\ge 1$:

\medskip

              $\ell'_1f^{(1)}=f^{(1)}\ell_1$, i.e., $f^{(1)}\colon (L,\ell_1)\to (L',\ell'_1)$ is a differential map,\hfill\break

              $\ell'_1\bigl(f^{(2)}(x\otimes y)\bigr) +\ell'_2\bigl(f^{(1)}(x)\otimes f^{(1)}(y)\bigr)=$\hfill\break
                \indent$ =f^{(1)}\bigl(\ell_2(x\otimes y)\bigr)+f^{(2)}\bigl(\ell_1( x)\otimes y-(-1)^{|x|}x\otimes\ell_1( y)\bigr)$,\hfill${(1)}$\break

              $\cdots\cdots\cdots$

\medskip

Abusing notation, and whenever there is no ambiguity, we shall often denote an $L_\infty$ morphism simply by $f\colon L\to L'$.  An $L_\infty$ morphism $f$ is a {\em quasi-isomorphism} if $f^{(1)}\colon (L,\ell_1)\stackrel{\simeq}{\to}(L',\ell'_1)$ is a  quasi-isomorphism of differential graded vector spaces, or equivalently, if  $f\colon \mathcal{C}_\infty(L)\stackrel{\simeq}{\to}\mathcal{C}_\infty(L')$ is a CDGC quasi-isomorphism.

\begin{definition}
As we do not want to restrict to  $L_\infty$ algebras which are nilpotent or nilpotently filtered, see Remark  \ref{comprendo} below,
a {\em Maurer-Cartan element} of an  $L_\infty$ algebra is an element $z\in L_{-1}$ for which $\ell_k(z,\stackrel{k}{\ldots}\,,
z)=0$ for $k$ sufficiently large and
$$
\sum_{k=1}^{\infty}\frac{1}{k!}\ell_k(z,\stackrel{k}{\ldots}\,,
z)=\sum_{k =1}^\infty \frac{1}{k !}[ z ^{\wedge  k } ]=0.
$$
Observe that, whenever $(L,\partial)$ is a differential Lie algebra, DGL henceforth, i.e., an $L_\infty$ algebra such that $\ell_k=0$ for $k\ge 3$, then $z\in L_{-1}$ is a Maurer-Cartan element if
$$
\partial z=-\frac{1}{2}[z,z].
$$
We will denote the set of Maurer-Cartan elements in $L$ by ${\rm
MC}(L)$.
\end{definition}

Given an $L_{\infty}$ algebra $L$ and $z\in \mc(L)$, define the {\em perturbation of $\ell_k$ by $z$} as
$$
\ell_k^z(x_1,\ldots,
x_k)=[x_1,\dots ,x_k]_z=\sum_{i=0}^{\infty}\frac{1}{i!}\ell_{i+k}(z,\stackrel{i}{\dots}\,,z,x_1,\ldots,
x_k).
$$
Whenever the above sum is always finite, $(L,\{\ell_i^z\})$ is again an $L_\infty $ algebra ~\cite[Proposition~4.4]{getz} which will be denoted by $L^z$.

Given an $L_\infty$ algebra $L$ and a commutative differential graded algebra $A$, CDGA from now on, the tensor product $L\otimes A$ inherits a natural $L_\infty$ structure with brackets
$$\begin{aligned}
&\ell_1(x\otimes a)=\partial x\otimes a +(-1)^{|x|}x\otimes da,\\
&\ell_k(x_1\otimes a_1,\ldots,x_k\otimes a_k)=\varepsilon \ell_k(x_1,\ldots,x_k)\otimes a_1\ldots a_k,\qquad k\ge 2,
\end{aligned}
$$
where $\varepsilon=(-1)^{\sum_{i>j}|x_i||a_j|}$ is the sign provided by the Koszul convention.

Realization of $L_\infty$ algebras is done via the cochain functor. Hence, with the only purpose of unambiguously defining this functor, we need to constrain the class of $L_\infty$ algebras we work with.

\begin{definition}\label{mild} An $L_\infty$ algebra $L$ is {\em mild} if every bracket is locally finite, i.e., for any $a\in L$ there are finite dimensional subspaces $S_k\subset \otimes^kL$, $k\ge 1$,  which vanish for $k$ sufficiently large, and such that
$$
\ell_k^{-1}\langle a\rangle\subset \ker\ell_k\oplus S_k.
$$
\end{definition}

Here, $\langle a\rangle$ denotes the linear subspace spanned by $a$. Note  that
 if $L$ is mild and $z\in\mc(L)$, then $L^z$ is also mild.

\medskip

\begin{remark}\label{finitud}
Observe that, even finite type  and bounded  $L_\infty$ algebras are not necessarily mild. Indeed, the $L_\infty$ algebra $L=L_{-1}\oplus L_{-2}$ with $L_{-1}=\langle a\rangle$, $L_{-2}=\langle b\rangle$ and $\ell_k(a,\dots,a)=b$ for any $k\ge 1$, is clearly not mild.  Also, mild $L_\infty$ algebras do not satisfy in general  finiteness or bounding properties. For instance, any non finite type, non bounded abelian $L_\infty$ algebra ($\ell_k=0$ for all $k\ge 1$) is trivially mild.
\end{remark}

\begin{definition}\label{cocadena}
Given $L$ a mild $L_\infty$ algebra, choose a homogeneous basis $\{z_i\}$ of $L$ and denote by $V\subset (sL)^\sharp$ the subspace of the dual of $sL$ generated by the forms $\{v_i\}$ in which $v_i(sz_r)=\delta_i^r$. We then define the commutative differential graded algebra $ \cinfinito(L)$ of {\em cochains} on $L$ as the  free commutative algebra $\Lambda V$ endowed with the differential $\dalg =\sum_{k \geq \pl } \dalg _k$ with $d_kV\subset\Lambda^kV$, naturally induced by the $L_\infty$ structure on $L$ via the pairing above:
$$\langle \dalg _kv;sx_1\wedge \dots \wedge sx_k \rangle
=\varepsilon \langle v;s\ell_k( x_1,\dots, x_k) \rangle,\eqno (2)$$
where $\varepsilon=(-1)^{|v|+\sum_{j=1}^{k-1}(k-j)|x_j|}$.
 This sign arises by thinking of $d_k$ as the  dual of $\delta^{(k)}$,  and writing $\delta^{(k)}$ in terms of the corresponding $\ell_k$.

 Note that this construction is independent of the choice of a basis because $V$ is a subspace of $(sL)^\sharp$ consisting of linear forms which vanish outside a finite dimensional subspace  of $sL$.

 Observe that mildness is essential for $d$ to be well defined. Indeed, the condition $\ell_k^{-1}\langle a\rangle\subset \ker\ell_k\oplus S_k$ is equivalent to the  fact that, for any $k\ge 1$ and any generator $v\in V$,   $d_kv$  is a finite sum. The condition that $S_k=0$ (or equivalently $\ell_k^{-1}\langle a\rangle\subset\ker \ell_k$) for $k$ sufficiently large,  translates to the fact that, again for any generator $v\in V$, $d_kv=0$ for $k$ sufficiently large.

 \end{definition}

Observe that, for a mild $L_\infty$ algebra  $L$ of finite type, $\cinfinito(L)=(\Lambda (sL)^\sharp,d)$ with the differential defined by equation $(2)$.
 Note also that, even if  $L$ is of finite type and bounded,  it is essential to assume  mildness  so that, for each $v\in V$, $d_kv$ is well defined and eventually null.  Observe also that,  whenever $(L,\partial)$ is a finite type, positively graded DGL, then $\cinfinito(L)$ is
 the classical cochain algebra on $(L,\partial)$; see \cite[\S23]{fehatho}.

The construction $\cinfinito(-)$ does not define a functor unless we also restrict the class of $L_\infty$ morphisms.

\begin{definition}\label{mildmorfismo} An $L_\infty$ morphism $\phi\colon (\Lambda sL,d)\to (\Lambda sM,d)$  is {\em mild} if every $\phi^{(k)}$ is locally finite, i.e., if for any $a\in M$ there are finite dimensional subspaces $S_k\subset \otimes^k L$,  $k\ge 1$,  which vanish for $k$ sufficiently large, and such that
${\phi^{(k)}}^{-1}\langle a\rangle\subset \ker \phi^{(k)}\oplus S_k$.
\end{definition}

\begin{definition}\label{cinfinitomorfismo} Given a mild $L_\infty$ morphism $\phi\colon (\Lambda sL,d)\to (\Lambda sM,d)$, define
$$
\cinfinito(\phi)\colon \cinfinito(M)\to\cinfinito (L)
$$
 as the following CDGA morphism: write  $\cinfinito(L)=\Lambda V$, $\cinfinito(M)=\Lambda W$. Then, on $W$ , $\mathcal{C}^\infty(g) =\sum_{k\geq 1}\mathcal{C}^\infty (g)_k$, with each $\mathcal{C}^\infty (g)_k\colon W\to \Lambda ^kV$ given  via the pairing,
$$\langle \mathcal{C}^\infty (g)_k w; sx_{1}\wedge \cdots \wedge sx_{k}\rangle =\varepsilon \langle w ; sg^{(k)}(x_{1}\otimes \cdots \otimes x_{k})\rangle\eqno{(3)}$$
where $\varepsilon=(-1)^{\sum _{j=1}^k(k-j)|x_{j}|}$. Again, this sign comes from setting first $\mathcal{C}^\infty (g)_k$ as the induced by the dual of $(\pi g)^{(k)}$ and then, writing this map in terms of $g^{(k)}$.
\end{definition}

 Observe that the  assumption ${\phi^{(k)}}^{-1}\langle a\rangle\subset \ker \phi^{(k)}\oplus S_k$  is equivalent to say that, for each $w\in W$, $\cinfinito(\phi)_k(w)$ is a finite sum, and thus it is well defined. On the other hand the  requirement $S_k=0$ (or equivalently, ${\phi^{(k)}}^{-1}\langle a\rangle\subset \ker \phi^{(k)}$) for $k$ sufficiently large, translates to the fact that, for each $w\in W$, $\cinfinito(\phi)_k(w)=0$ for $k$ sufficiently large, and thus $\cinfinito(\phi)$ is well defined.

 Note also that an $L_\infty$ morphism between mild $L_\infty$ algebras is not necessarily mild.

  From now on we denote by $\catlinfinito$ the category of mild $L_\infty$ algebras and mild morphisms.
Even imposing finiteness and bounding assumptions,  the following remark shows that $\catlinfinito$ is the most general category in which the free cochain functor can be properly defined. We also see that mildness and nilpotency are conceptually different notions.

\begin{remark}\label{comprendo}  (i) Observe that the CDGA $\csubinfinito(L)^{\sharp}$, dual of $(\Lambda sL,d)$,  is not isomorphic to $\Lambda(sL)^\sharp$ unless very strict restrictions are assumed. Indeed, if $U$ is a graded vector space   of finite type,  bounded (below or above) and $U_0=0$,  the proof in Proposition 23.1 of \cite{fehatho} shows that  $(\Lambda U)^\sharp\cong \Lambda U^\sharp$. However,  for instance, if $U$ is of finite type but $U_0$ is non-trivial, $\Lambda U_0$ is of infinite countable dimension and so is  $(\Lambda U)_p=\Lambda U_0\cdot(\Lambda U)_p$ for each $p\in\bz$. Hence, its dual $(\Lambda U)^{\sharp p}$ is of infinite uncountable dimension in contrast to the countable dimension of $(\Lambda U^\sharp)^p$. In general, the linear (non differential) projection $\Lambda sL\to sL$ induces an injection $(sL)^\sharp\hookrightarrow (\Lambda sL)^\sharp$ which extends to a  (non differential) commutative algebra  injection $\Lambda (sL)^\sharp\hookrightarrow\csubinfinito(L)^\sharp$. However, note that even for mild $L_\infty$ algebras, equation  $(2)$ above does not define a CDGA structure in $\Lambda (sL)^\sharp$.

Even if one wishes to think of the CDGA $\csubinfinito(L)^{\sharp}$ as a ``free" algebra, the ``generators'' are in general infinite series with possibly uncountable terms. This entails serious convergence problems which can only be solved by, again, imposing strong restrictions of several kinds in the coefficient field, the $L_\infty$ structure of $L$ or in the dual functor.

(ii)  It is important to note that mildness and the beautiful notion of nilpotency in $L_\infty$ algebras \cite[Definition 4.2]{getz} are different.  The first is assumed here to have a realization functor via cochains while the latter was imposed to realize the Maurer-Cartan set as a Kan complex \cite[\S4]{getz} homotopy equivalent to its nerve \cite[Corollary 5.9]{getz}. For instance, consider $(\Lambda V,d)$ the minimal Sullivan model of a wedge of two odd spheres of dimension greater than 1. Note that $V$ is of finite type, although infinite dimensional, concentrated in odd degrees, and the differential is quadratic. Moreover, there are elements $u,v_n\in V$, with $n\ge 1$ such that $dv_{n+1}=uv_{n}$. Trivially, $(\Lambda V,d)=\cinfinito(L)$ for the DGL $(L,\partial)$, in which $(sL)^\sharp=V$ and the differential and bracket are induced by $d$ and the product on $\Lambda V$ respectively. Thus, there are elements in $a,b_n\in L$, $n\ge 0$ for which $[a,b_n]=b_{n+1}$ and therefore $L$ is not nilpotent while it is obviously mild as it is of finite type and bounded below. On the other hand, and starting with a free DGL with zero differential on suitable generators, one may easily impose relations so that it becomes a non mild and nilpotent DGL.

 \end{remark}

\begin{definition}
The {\em realization} functor,
$$
 \langle-\rangle\colon \catlinfinito\longrightarrow \catsimpset.
 $$
 is the composition of
 $$
 \cinfinito\colon \catlinfinito\to \catcdga
 $$
 with the
 Sullivan  realization functor, denoted in the same way by
 $$
 \langle - \rangle\colon \catcdga\to \catsimpset.
 $$
 Recall, see for instance \cite[\S17]{fehatho}, that given $A$ a CDGA, $\langle A\rangle$ is the simplicial set in which $\langle A\rangle_n=Hom(A,(A_{PL})_n)$, where $A_{PL}$ is the simplicial CDGA given by the differential polynomial forms on the standard simplices, i.e.,
 $$(A_{PL})_n=\Lambda ( t_1,\ldots,t_n,dt_1,\ldots,dt_n)
 $$
  with $|t_i|=0$ for all $i$.

Finally, we say that an $L_\infty$ algebra $L$ is a {\em model} of a space or simplicial set $X$ if $\langle\cinfinito(L)\rangle$ has the homotopy type of $X$.
\end{definition}

\section{The Maurer-Cartan functor and CDGA morphisms}\label{maurer}

 We explicitly extend \cite[Remark 16]{bufemu} or \cite[Proposition 1.1]{getz}, whose germ already appears in \cite[Appendix B6]{qui} to the category $\catlinfinito$. To our knowledge, this extension is by no means straightforward.

Let $(\lib(u),\partial)$ be the free Lie algebra generated by the Maurer-Cartan element $u$, that is, $\partial u=-\frac{1}{2}[u,u]$. This DGL will play a fundamental role in what follows. Its cochain algebra is easily computed to yield
$$\cinfinito(\lib(u),\partial)\cong (\Lambda (x,y),d)$$
 where
 $x$ and
$y$ are generators of degrees $0$ and $-1$ respectively,  $dx=0$  and $dy=\frac{1}{2}(x^2-x)$. In fact, as graded vector space $\lib(u)=\langle u,[u,u]\rangle$. Thus, see Definition \ref{cocadena},
$$
\cinfinito(\lib(u),\partial)=\bigl(\Lambda\bigl(s\lib(u)\bigr)^\sharp,d\bigr)=(\Lambda (x,y),d)
$$
with $x=(su)^\sharp$, $y=(s[u,u])^\sharp$ of degrees $0$ and $-1$ respectively. If we compute the differential $d$ as in Definition  \ref{cocadena},  the only non vanishing terms of equation $(2)$ are the following:
$$
\begin{aligned}
&\langle d_1(s[u,u])^\sharp;su\rangle=-\langle(s[u,u])^\sharp;s\partial u\rangle=\frac{1}{2}\langle(s[u,u])^\sharp;s[u,u]\rangle=\frac{1}{2},\\
&\langle d_2(s[u,u])^\sharp;su\wedge su\rangle=\langle (s[u,u])^\sharp;s[u,u]\rangle=1.\\
\end{aligned}
$$
Hence, $dx=0$. On the other hand, from the first equation we get that $d_1y=\frac{1}{2}x$. Finally, from the second equation, together with the obvious equality $\langle x^2;su\wedge su\rangle=2$ we get that $d_2y=\frac{1}{2}x^2$. However, for practical and technical purposes in what follows, we will replace $x$ by $-x$ so that $dy=\frac{1}{2}(x^2-x)$.

It is   easy to check that the geometrical realization of $ (\Lambda (x,y),d)$ has the homotopy type of $S^0$. Moreover,  consider as in \cite{femutan} the CDGA model of $S^0$ given  by $\bq \alpha\oplus\bq\beta$ with $\alpha$ and $\beta$ idempotent elements of degree $0$,  $\alpha^2=\alpha$, $\beta^2=\beta$, with $\alpha\beta=0$.  Note that the identity in this algebra is $\alpha+\beta$. Hence, replacing $\alpha$ by $x$ and $\beta$ by $1-x$,  this CDGA is isomorphic to $\bq x\oplus\bq (1-x)$ with $x^2=x$, i.e. $\Lambda x/(x^2-x)$, which is precisely the cohomology algebra of  $\cinfinito(\lib(u),\partial)=(\Lambda (x,y),d)$.

\medskip

 We begin by the following  fundamental auxiliary result.

 \begin{lemma}\label{existenciaMC}
Let
 $L$ be an $L_\infty $ algebra. Then, for any  $z\in L_{-1}$,  there exists a unique $L_\infty$ morphism $\phi \colon (\lib (u),\partial)\to L$ such that
$$\begin{aligned}
&\phi^{(1)}(u)=z,\\
&\phi^{(k)}(u\otimes\ldots\otimes u)=0,\quad k\ge 2.\\
\end{aligned}
$$
Moreover,  $z\in\mc(L)$  if and only if $\phi^{(k)}([u,u]\otimes u \otimes {\dots} \otimes  u)=0$ for $k$ large enough.
\end{lemma}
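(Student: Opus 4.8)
The statement has two parts: existence/uniqueness of an $L_\infty$ morphism $\phi\colon(\lib(u),\partial)\to L$ with the prescribed behaviour on the generators $u$ and $[u,u]$, and the Maurer-Cartan characterization. I would work entirely on the CDGC side, where an $L_\infty$ morphism is literally a morphism of coaugmented CDGC's $\phi\colon(\Lambda s\lib(u),\delta)\to(\Lambda sL,\delta)$, and where, since $\Lambda sL$ is cofree cocommutative, such a morphism is determined by its composite with the projection $\pi\colon\Lambda sL\to sL$. Since $\lib(u)=\langle u,[u,u]\rangle$ is two-dimensional with $su$ in degree $0$ and $s[u,u]$ in degree $-1$, the coalgebra $\Lambda s\lib(u)$ is small and explicit; a CDGC morphism out of it is the same as a pair of elements $\phi(su)\in sL$ and $\phi(s[u,u])\in sL$ of the correct degrees, subject to the single compatibility with the differential coming from $\delta(s[u,u])$.

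\textbf{Existence and uniqueness.} First I would translate the two prescribed conditions. The family $\{\phi^{(k)}(u\otimes\cdots\otimes u)\}_{k\ge1}$ packages exactly the value of $\pi\phi$ on the element $e^{su}-1=\sum_{k\ge1}\frac{1}{k!}(su)^{\wedge k}\in\Lambda^+ s\lib(u)$ (up to the signs $(-1)^{k(k-1)/2}$ relating $f^{(k)}$ and $(\pi f)^{(k)}$, which here are harmless since $su$ has degree $0$). So the conditions $\phi^{(1)}(u)=z$, $\phi^{(k)}(u^{\otimes k})=0$ for $k\ge2$ say precisely that $\pi\phi(su)=sz$ and $\pi\phi((su)^{\wedge k})=0$ for $k\ge2$. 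This, together with the requirement that $\phi$ commute with $\delta$, forces the value of $\pi\phi$ on $s[u,u]$: applying $\phi$ to $\delta(s[u,u])$ — whose two nonzero pieces are $\delta^{(1)}(s[u,u])$ and $\delta^{(2)}(s[u,u])=s[u,u]\mapsto$ something, read off from the computation $d_1y=\tfrac12 x$, $d_2y=\tfrac12 x^2$ dualized in the excerpt — and using cofreeness, one sees that $\pi\delta\phi(s[u,u])$ is completely determined by the already-fixed values $\pi\phi(su),\pi\phi((su)^{\wedge 2})$. Concretely, $\pi\delta\phi(s[u,u]) = -\sum_{k\ge1}\frac1{k!}\delta^{(k)}(\phi(su)^{\wedge k})$, i.e. the full curvature $\sum\frac1{k!}s\ell_k(z^{\wedge k})$ (with the appropriate sign), which lives in $sL_{-2}$. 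Since $s\lib(u)$ is freely generated as a graded space by $su,s[u,u]$ and $\Lambda sL$ is cofree, extending this data to a coalgebra map and checking it is a chain map is then a standard, purely formal verification (the relation $\delta^2=0$ on the source is generated by $\delta(s[u,u])$, so only that one relation needs to be matched, and it is matched by construction). This gives existence; uniqueness is immediate because cofreeness says $\phi$ is determined by $\pi\phi$, and $\pi\phi$ is determined on generators by the above.

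\textbf{The Maurer-Cartan characterization.} By definition (see the excerpt), $z\in\mc(L)$ requires two things: that $\ell_k(z^{\wedge k})=0$ for $k$ large, and that $\sum_k\frac1{k!}\ell_k(z^{\wedge k})=0$. I would identify $\phi^{(k)}([u,u]\otimes u\otimes\cdots\otimes u)$ (with $k-1$ copies of $u$) with the degree-$(-2)$ element obtained by applying $\pi\phi$ to $s[u,u]\wedge(su)^{\wedge(k-1)}$, and then use the coalgebra-morphism identity $\pi\phi\circ\delta=\delta\circ\phi$ evaluated on $s[u,u]$, expanding $\phi$ via its "exponential" form $\phi=\exp$ of $\pi\phi$ on the cofree coalgebra. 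The upshot of unwinding this — exactly as in the classical DGL case and in \cite[Remark 16]{bufemu}, \cite[Proposition 1.1]{getz} — is that the total sum $\sum_{k\ge1}\frac1{(k-1)!}\phi^{(k)}([u,u]\otimes u^{\wedge(k-1)})$ equals (a sign times) the curvature $\sum_{k\ge1}\frac1{k!}\ell_k(z^{\wedge k})$, and moreover the $k$-th term $\phi^{(k)}([u,u]\otimes u^{\wedge(k-1)})$ is, modulo lower terms already known to be present, built from $\ell_k(z^{\wedge k})$ and strictly smaller brackets $\ell_j(z^{\wedge j})$, $j<k$. Hence: if $z\in\mc(L)$ then first $\ell_k(z^{\wedge k})=0$ eventually, and an induction on $k$ (peeling off the top bracket) shows $\phi^{(k)}([u,u]\otimes u^{\wedge(k-1)})=0$ for all $k$ large; conversely, if $\phi^{(k)}([u,u]\otimes u^{\wedge(k-1)})=0$ for $k$ large, the same triangular relation run downward shows the curvature sum is a finite sum which must vanish, and that the individual $\ell_k(z^{\wedge k})$ vanish for large $k$, i.e. $z\in\mc(L)$.

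\textbf{Main obstacle.} The routine but delicate point is bookkeeping the Koszul signs and the $(-1)^{k(k-1)/2}$ factors relating $\phi^{(k)}$ to $(\pi\phi)^{(k)}$ and $\ell_k$ to $\delta^{(k)}$, and verifying that the "triangular" relation between $\phi^{(k)}([u,u]\otimes u^{\wedge(k-1)})$ and $\{\ell_j(z^{\wedge j})\}_{j\le k}$ is genuinely triangular (so the two inductions above close). I expect the cleanest route is to avoid index-chasing by noting that the morphism $\phi$ constructed in the first part pulls back, via $\cinfinito$, to a CDGA map $\cinfinito(L)\to\cinfinito(\lib(u))=(\Lambda(x,y),d)$ when $L$ is mild — but since we do not want mildness hypotheses here, the self-contained argument is the cofree-coalgebra computation sketched above, carried out once and for all on the generator $s[u,u]$.
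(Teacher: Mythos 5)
Your overall framing---a CDGC morphism out of the explicit coalgebra $\Lambda^+ s\lib(u)$, determined by the two families $\phi^{(k)}(u\otimes\cdots\otimes u)$ and $\phi^{(k)}([u,u]\otimes u\otimes\cdots\otimes u)$, with the second family forced recursively by the chain-map condition and the Maurer--Cartan statement read off from a triangular relation---is the same strategy the paper follows. The Maurer--Cartan half of your argument is essentially right: the recursion closes into the explicit formula $\phi^{(k)}([u,u]\otimes u\otimes\cdots\otimes u)=-2(k-1)!\sum_{i=1}^{k}\frac{1}{i!}\ell_i(z,\dots,z)$, i.e.\ the $k$-th \emph{partial sum} of the curvature, and both directions of the equivalence then follow exactly by the ``take differences of partial sums'' argument you sketch.

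The gap is in existence. You assert that, once $\pi\phi$ is fixed on the powers of $su$ and recursively on the elements $s[u,u]\wedge(su)^{\wedge(k-1)}$, ``only that one relation needs to be matched, and it is matched by construction.'' This is not so. The defect $\delta'\phi-\phi\delta$ is a $\phi$-coderivation, hence vanishes iff its corestriction to $sL$ vanishes on \emph{every} element of $\Lambda^+s\lib(u)$: on all $(su)^{\wedge k}$ \emph{and} on all $s[u,u]\wedge(su)^{\wedge(k-1)}$. The conditions on $(su)^{\wedge k}$ are the ones that recursively define $\phi^{(k)}([u,u]\otimes u\otimes\cdots\otimes u)$ (the first identity in the paper's system $(4)$); the conditions on $s[u,u]\wedge(su)^{\wedge(k-1)}$ form a second infinite family (the second identity in $(4)$), landing in $L_{-3}$, which is not zero in general, so nothing makes them hold for free. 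Verifying this second family is the real content of the lemma: the paper does it by an induction interleaved with the recursion, in which the $k$-th consistency identity splits into the $(k-1)$-st one (the inductive hypothesis) plus a term that vanishes precisely by the $k$-th generalized Jacobi identity of $L$. Your proposal omits this step entirely, and it is not ``purely formal.'' Two smaller slips: $\delta(s[u,u])=0$ (since $[u,u]$ is a $\partial$-cycle and $[u,[u,u]]=0$), so ``applying $\phi$ to $\delta(s[u,u])$'' yields no constraint---the recursion actually comes from the chain-map condition on $(su)^{\wedge k}$, whose image under $\delta$ is what produces the $s[u,u]$-terms; and ``$\delta^{(2)}(s[u,u])$'' does not typecheck, as $\delta^{(2)}$ is defined on $\Lambda^2$.
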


\begin{remark}\label{universal} In particular, any element $z$ of degree $-1$ of a given $L_\infty$ algebra can be written as $\sum_{i\ge 1}\frac{1}{i!}\phi^{(i)}(u\otimes\dots\otimes u)$ and thus, independently of any finiteness or mildness assumption, Maurer-Cartan elements are not preserved in the standard fashion by $L_\infty$ morphisms. Note also that, even for $\phi$ mild, the condition $\phi^{(k)}([u,u]\otimes u \otimes {\dots} \otimes  u)=0$ for $k$ large enough is not automatically satisfied.
\end{remark}

\begin{corollary}\label{coromc} Let $L$ be an $L_\infty$ algebra of finite type. Then, an element $z\in L_{-1}$ is Maurer-Cartan if and only if there exists a mild $L_\infty$ morphism $\phi \colon (\lib (u),\partial)\to L$ such that $\phi^{(1)}(u)=z$ and $\phi^{(k)}(u\otimes\ldots\otimes u)=0$ for $k\ge 2$.
\end{corollary}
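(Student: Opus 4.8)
The plan is to read the statement off Lemma~\ref{existenciaMC}, the hypothesis that $L$ be of finite type entering only through one limiting argument. By that lemma there is a \emph{unique} $L_\infty$ morphism $\phi\colon(\lib(u),\partial)\to L$ with $\phi^{(1)}(u)=z$ and $\phi^{(k)}(u\otimes\dots\otimes u)=0$ for $k\ge 2$, and $z\in\mc(L)$ if and only if $v_k:=\phi^{(k)}([u,u]\otimes u\otimes\dots\otimes u)$ vanishes for $k$ large enough. By uniqueness, asking for \emph{a} mild morphism with the two displayed properties is the same as asking for \emph{this} $\phi$ to be mild, so the corollary reduces to proving that, when $L$ is of finite type, $\phi$ is mild if and only if $v_k=0$ for $k\gg 0$.

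Before addressing this I would record two elementary facts about $\phi$, neither of which uses finiteness. First, $\lib(u)=\langle u,[u,u]\rangle$ is two dimensional, with $|u|=-1$ and $|[u,u]|=-2$; since the ground field has characteristic zero, graded skew--symmetry forces $\phi^{(k)}$ to vanish on every tensor of $\otimes^k\lib(u)$ having two or more factors equal to $[u,u]$, so together with $\phi^{(k)}(u\otimes\dots\otimes u)=0$ this shows that, for $k\ge 2$, $\phi^{(k)}$ is completely determined by the single element $v_k$; in particular $v_k=0$ forces $\phi^{(k)}=0$. Secondly, $v_k$ lies in $L_{-2}$ for \emph{every} $k$: indeed $\phi^{(k)}([u,u]\otimes u\otimes\dots\otimes u)$ equals, up to sign, $s^{-1}\bigl((\pi\phi)^{(k)}(s[u,u]\wedge su\wedge\dots\wedge su)\bigr)$, and since $s[u,u]\wedge su\wedge\dots\wedge su$ is homogeneous of degree $-1$ and $(\pi\phi)^{(k)}$ has degree zero, its image lies in $(sL)_{-1}=L_{-2}$.

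Granting these, the implication $z\in\mc(L)\Rightarrow\phi$ mild is immediate: if $v_k=0$ for $k\ge N$ then $\phi^{(k)}=0$ for all $k\ge\max(N,2)$, so only finitely many of the components $\cinfinito(\phi)_k$ are nonzero, the others being automatically defined because each $\otimes^k\lib(u)$ is finite dimensional; hence $\cinfinito(\phi)$ exists, that is, $\phi$ is mild. For the converse I would pass through the cochain functor. Recall that $\cinfinito(\lib(u),\partial)=(\Lambda(x,y),d)$ with $|x|=0$ and $|y|=-1$, so $\Lambda^k\langle x,y\rangle=\langle x^k,x^{k-1}y\rangle$; evaluating the pairing $(3)$ on $su\wedge\dots\wedge su$ and on $s[u,u]\wedge su\wedge\dots\wedge su$, one sees that for a generator $v$ of $\cinfinito(L)$ and $k\ge 2$ the $x^k$--component of $\cinfinito(\phi)_k(v)$ is governed by $\phi^{(k)}(u\otimes\dots\otimes u)=0$ while the $x^{k-1}y$--component is a fixed nonzero scalar times $\langle v;sv_k\rangle$. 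Now mildness of $\phi$ means precisely that $\cinfinito(\phi)$ is well defined, i.e.\ that for each generator $v$ the series $\sum_k\cinfinito(\phi)_k(v)$ has only finitely many nonzero terms (Definition~\ref{cinfinitomorfismo} and the observation following it). Applying this to those generators $v_i$ of $\cinfinito(L)$ dual to the degree $-2$ vectors of a fixed homogeneous basis $\{z_i\}$ of $L$ (the only ones pairing nontrivially with $sv_k$), we get that the coordinate $\langle v_i;sv_k\rangle$ of $v_k$ along $z_i$ vanishes for all $k\ge N_i$. Since $L$ is of finite type there are only finitely many such $i$, so $N:=\max_i N_i<\infty$; hence for $k\ge N$ every coordinate of $v_k\in L_{-2}$ vanishes, i.e.\ $v_k=0$, and by Lemma~\ref{existenciaMC} this says $z\in\mc(L)$.

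The one genuinely delicate point is this last step, and it is exactly where finiteness is indispensable: mildness by itself only gives that each \emph{fixed} coordinate of $v_k$ is eventually zero, with a threshold that depends on the coordinate, and upgrading this to the uniform statement $v_k=0$ for $k\gg 0$ requires the degree $-2$ part of $L$ to be finite dimensional. Without that hypothesis the equivalence breaks down, as already noted in Remark~\ref{universal}.
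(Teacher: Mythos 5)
Your proof is correct and takes essentially the same route as the paper's: both implications are read off Lemma \ref{existenciaMC} after noting that the unique morphism $\phi$ produced there is mild exactly when $v_k=\phi^{(k)}([u,u]\otimes u\otimes\cdots\otimes u)$ vanishes for $k$ large, with the finite-type hypothesis supplying the uniform bound in the converse. The paper disposes of each direction in a single sentence, and you have simply made explicit the details it leaves implicit (that $\phi^{(k)}$ is governed by the single element $v_k\in L_{-2}$, and the dualization through $\cinfinito(\phi)$ that turns mildness into coordinatewise eventual vanishing).
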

\begin{proof}
If $z\in \mc(L)$ the morphism $\phi$ of lemma above is obviously mild as  $\phi^{(k)}([u,u]\otimes u \otimes {\dots} \otimes  u)=0$ for $k$ large enough. Conversely, if $\phi$ is a mild $L_\infty$ morphism and $L$ is of finite type, then $\phi^{(k)}([u,u]\otimes u \otimes {\dots} \otimes  u)$ necessarily vanishes for $k$ large.
\end{proof}

\begin{proof}[Proof of Lemma \ref{existenciaMC}]
As $(\lib(u),\partial)$ is the vector space spanned only by $u$ and $[u,u]$, with $\partial u=-\frac{1}{2}[u,u]$, an
$L_\infty$ morphism $\phi \colon \lib (u)\to L$ is simply a CDGC morphism,
$$
\phi\colon(\Lambda(su,s[u,u]),\delta)\longrightarrow (\Lambda sL,\delta),
$$
which is completely determined by the elements
$$
 \phi^{(k)}(u\otimes {\dots} \otimes u),\quad \phi^{(k)}([u,u]\otimes u \otimes {\dots} \otimes  u),\quad k\geq 1,
 $$
satisfying the system $(1)$ of Section 1.
In this particular case, if
 we set
$$\phi^{(1)}(u)=z,\quad \phi^{(k)}(u\otimes {\dots} \otimes u)=0,\quad k\ge 2,
$$
and since $\ell_i=0$, for $i\ge 3$ in $(\lib(u),\partial)$,
 a direct computation shows that $\phi$ is indeed an $L_\infty$ morphism if the following identities hold for any $k\ge 1$,
$$\ell_k(z,\dots,z)=\binom{k}{2}\phi^{(k-1)}([u,u]\otimes
 u\otimes\cdots \otimes u)-\frac{k}{2}\phi^{(k)}([u,u]\otimes u\otimes\cdots
 \otimes
 u),$$
$$
\sum_{j=1}^k\binom{k-1}{j-1}\ell_j\Bigl(
\phi^{(k-j+1)}([u,u]\otimes u\otimes{\cdots}\otimes
u),z , \stackrel{j-1}{\dots},
z\Bigr)=0.\eqno{(4)}
$$

% system $(1)$ of Section 1. We set
%$$\phi^{(1)}(u)=z,\quad \phi^{(k)}(u\otimes {\dots} \otimes u)=0,\quad k\ge 2,
%$$
We will show that $\phi^{(k)}([u,u]\otimes u \otimes {\dots} \otimes  u)$, satisfying the above identities, are uniquely determined by the formula
$$
\phi^{(k)}([u,u]\otimes
u\otimes \cdots \otimes
u)=-2(k-1)!\sum_{i=1}^k\frac{1}{i!}\ell_i(z,
{\dots}, z).\eqno{(5)}
$$
First of all,  for $k=1$, the first  identity in $(4)$ is simply
$$
\ell_1z=-\frac{1}{2}\phi^{(1)}[u,u].
$$
Thus, we are forced to define
$$
\phi^{(1)}[u,u]=-2\ell_1(z),
$$
as in $(5)$. The second identity in $(4)$ for $k=1$ reads $\ell_1\phi^{(1)}[u,u]=0$ which is trivially satisfied:
$$
\ell_1\phi^{(1)}[u,u]=-2\ell_1^2 (z)=0.
$$
Assume the identities in $(4)$ are satisfied for $k-1$ by setting formula $(5)$  for integers smaller than $k$.

%Next, since
 %$\phi^{(j)}(u\otimes \cdots\otimes u)=0$ for $j\geq 2$, and $\ell_i(u\otimes\dots\otimes u)=0$ for $i\geq 3$, the $k^{\text{th}}$ equation in $(1)$ for $u\otimes \stackrel{k}{\dots} \otimes u$ reduces to
%$$\ell_k(z\otimes \cdots \otimes z)=\binom{k}{2}\phi^{(k-1)}([u,u]\otimes
% u\otimes\cdots \otimes u)-\frac{k}{2}\phi^{(k)}([u,u]\otimes u\otimes\cdots
% \otimes
% u).$$
Again, from the first identity in $(4)$ for $k$, we are  forced to define
$$
\phi^{(k)}([u,u]\otimes u\otimes...
 \otimes
 u)=(k-1)\phi^{(k-1)}([u,u]\otimes
 u\otimes... \otimes u)-\frac{2}{k}
 \ell_k(z, ... , z).\eqno{(6)}
 $$
Now,  by the inductive hypothesis for $k-1$, this expression becomes
$$
\begin{aligned}
&-2(k-1)(k-2)!\sum_{i=1}^{k-1}\frac{1}{i!}\ell_i(z,\dots,z)-\frac{2}{k}
 \ell_k(z,\dots , z)\\
 &=-2(k-1)! \sum_{i=1}^{k}\frac{1}{i!}\ell_i(z,\dots, z),
 \end{aligned}
 $$
which is precisely the equation
  $(5)$ for $k$. To finish, we must check  that the second identity in $(4)$ for $k$,
  $$
\textstyle{\sum_{j=1}^k\binom{k-1}{j-1}\ell_j\Bigl(
\phi^{(k-j+1)}([u,u]\otimes u\otimes{\cdots}\otimes
u), z , \stackrel{j-1}{\dots},
z\Bigr)=0}
$$
holds.

For it,  replace in this equation $\phi^{(k-j+1)}([u,u]\otimes u\otimes{\cdots}\otimes
u)$ by its value on equation $(6)$ above for $k-j+1$. This yields the following, in which we have avoid the $\otimes$ sign for simplicity:
$$
\textstyle{\sum_{j=1}^k\binom{k-1}{j-1}\ell_j\Bigl(
\bigl((k-j)\phi^{(k-j)}([u,u] u {...}
u)-\frac{2}{k-j+1}\ell_{k-j+1}(z ,..., z)\bigr) ,z,
\stackrel{j-1}{...} ,z\Bigr).}$$
Then, this expression splits as
$$(k-1)\sum_{j=1}^{k-1}\binom{k-2}{j-1}\ell_j\Bigl(
\phi^{(k-j)}([u,u]u {\cdots}
u),z,\stackrel{j-1}{\dots},z\Bigr)$$
$$-\frac{2}{k}\sum_{j=1}^k\binom{k}{j-1}\ell_j(\ell_{k-j+1}(z,\dots,
z),z,\stackrel{j-1}{\dots}, z)).$$
By induction hypothesis the first summand is zero as it is the second identity in $(4)$ for $(k-1)$. The second summand is also zero by the $k^{\text{th}}$
higher Jacobi identity on $L$.

 Now we prove the second assertion. If $z\in \mc(L)$, then there is an integer $N$ such that $\ell_k(z,\dots , z)=0$ for $k\ge N$. Therefore, via equation $(5)$, and for $k\ge N$,
$$
\phi^{(k)}([u,u]\otimes
u\otimes \cdots \otimes u)=-2(k-1)!\sum_{i=1}^\infty \frac{1}{i!}\ell_i(z,
\stackrel{i}{\dots} , z)=0.
$$
The converse is also trivially satisfied in light of $(5)$.
\end{proof}

 In order to detect Maurer-Cartan elements at the cochain level, let $L$ be a mild $L_\infty$ algebra and let  $\{z_j\}_{j\in J}$  and $\{v_j\}_{j\in J}$ be basis of $L_{-1}$ and $V^0$ respectively (see Definition \ref{cocadena}). Then, any $z\in L_{-1}$, written as $z=\sum_j\lambda_jz_j$, is obviously identified with the  linear map $V^0\to\bk$ sending $v_j$ to $\lambda_j$ for all $j\in J$. However,  Maurer-Cartan elements of $L$ are not, in general, those $z$ for which this map can be extended as an augmentation of the cochains, i.e., as a CDGA morphism $\cinfinito(L)\to\bk$. The following examples corroborates this assertion.

 \medskip

\begin{example}\label{aumenta} (1) Let $L$ be the mild $L_\infty$ algebra generated by $\mathcal B=\{ \omega_i , \nu \}_{i\ge 2} $, with $|\omega_i|=-2$, $|\nu |=-1$, and where the only non zero brackets on generators are:
$$
\ell_1(\nu )=-\omega_2,\qquad \ell_k(\nu,\dots,\nu)=k!(\omega_k-\omega_{k+1}),\quad k\ge 2.
$$
Then, $\cinfinito(L)=(\Lambda V,d)$ in which $V$ is generated by $\{v,u_i\}_{i\ge 2}$, with $|v|=0$, $|u_i|=-1$,  $dv=0$ and $du_i=v^i-v^{i-1}$ for $i\ge 2$. Observe that the morphism $\cinfinito(L)\to\bk$ sending $u_i$ to $0$ for all $i$ and $v$ to $1$ is a well defined augmentation, even though $\nu$ is not a Maurer-Cartan element.

\medskip

(2) On the other hand, consider $L=L_{-1}$ an abelian $L_\infty$ algebra, of infinite dimension and concentrated in degree $-1$. Thus $\mc(L)=L$. Observe that  $\cinfinito(L)=(\Lambda V,0)$ where $V=V^0$ is of the same dimension as $L$. Thus, the set $\aug\cinfinito(L)$ of augmentations $\cinfinito(L)\to\bk$ is in one-to-one correspondence with the set of linear maps $V\to\bk$, i.e., with $V^\sharp$ which has dimension strictly bigger than that of $L$. Thus the cardinality of $\aug\cinfinito(L)$ is bigger than the cardinality of $L$. For instance, the augmentation $\cinfinito(L)\to\bk$ which sends any element of a given basis of $V$ to $1$ does not correspond to any  Maurer-Cartan element of $L$.
\end{example}

\medskip

\begin{remark}\label{tipofinito} In light of  example (2) above, it is important to note that, if one considers non-finite type mild $L_\infty$ algebras, very special and technical restrictions are needed to identify the Maurer-Cartan set with the   augmentations of the cochain algebra. For clarity in the exposition, these conditions, and the particular class of augmentations which correspond to Maurer-Cartan elements, are made explicit in the final remark at the end of the section. In the same way, in view of Lemma \ref{existenciaMC} and Remark \ref{universal},  Maurer-Cartan elements are not preserved by mild $L_\infty$ morphisms unless either finite type is assumed, or again, special restrictions are applied. Thus, hereafter, and again for the sake of clearness,  we restrict $\catlinfinito$ to the class of mild, finite type $L_\infty$ algebras.  Nevertheless, the reader may keep in mind that, in the general case, all of what follows remains true only under the assumptions in Remark \ref{finalre}.
\end{remark}

\begin{definition}\label{mc(g)} Let $g\colon L\to L'$ be a morphism in $\catlinfinito$ and   $z\in\mc(L)$. Define the map
$\mc(g)\colon \mc(L)\longrightarrow \mc(L')$ by
$$
 \mc(g)(z)=\sum_{k\ge1}\frac{1}{k!}g^{(k)}(z\otimes{\cdots}\otimes z).$$
 \end{definition}

 In the next result we see that $\mc(g)$ is well defined.
 Moreover, with the finiteness type assumptions in the above  remark, we  identify the Maurer-Cartan elements of $L\in\catlinfinito$ in a functorial way with the set $\aug\cinfinito(L)$ of augmentations of $\cinfinito(L)$.  We stress here that, to our knowledge,  the following result and the Corollary \ref{funtormc} that follows are not straightforward and do not follow at once  by simply generalizing their classical DGL counterpart of \cite[Remark 16]{bufemu} or \cite[Proposition 1.1]{getz} (compare to \cite[Lemma 2.3]{ber} or \cite[Proposition 2.2]{la}).

\begin{proposition}\label{inicial3} Let $g\colon L\to L'$ be a morphism in $\catlinfinito$ and   $z\in\mc(L)$. Then, $\mc(g)(z)$ is indeed a Maurer-Cartan element in $L'$. That is,
 $$\sum_{k\ge1}\frac{1}{k!}g^{(k)}(z\otimes{\cdots}\otimes z)\in\mc(L').$$
 Moreover, the functor
$$
\mc\colon\catlinfinito\to\catset
$$
 is naturally equivalent to the functor
 $$
 \aug\colon \catlinfinito\to\catset
 $$
 which assigns to  $g\colon L\to L'$  the map $\aug(g)\colon \aug \cinfinito(L)\to\aug\cinfinito (L')$ given by composition, $\aug(g)(\varepsilon)=\varepsilon\cinfinito(g)$.

 \end{proposition}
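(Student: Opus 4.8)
The plan is to deduce both assertions from one construction: for every $L\in\catlinfinito$ a natural bijection $\eta_L\colon\mc(L)\xrightarrow{\ \cong\ }\aug\cinfinito(L)$, and then to show that, through the $\eta_L$'s, the set map $\mc(g)$ of Definition \ref{mc(g)} corresponds to the evidently well defined map $\aug(g)=(-)\circ\cinfinito(g)$. Since $\aug(g)$ visibly lands in augmentations, this proves simultaneously that $\mc(g)(z)\in\mc(L')$ (the first assertion) and that $\eta=\{\eta_L\}$ is a natural equivalence (the second).

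\emph{Construction of $\eta_L$.} Write $\cinfinito(L)=\Lambda V$ as in Definition \ref{cocadena}. An augmentation is a CDGA morphism $\varepsilon\colon\Lambda V\to\bk$; being multiplicative it is determined by $\varepsilon|_V$, and since $\bk$ sits in degree $0$ it vanishes on $V^n$ for $n\neq 0$. As $L$ is of finite type, $V^0$ is the full dual of $(sL)_0=L_{-1}$, so $\varepsilon|_{V^0}$ is exactly the datum of an element $z_\varepsilon\in L_{-1}$, via $v\mapsto\langle v;sz_\varepsilon\rangle$. It remains to match the cocycle condition $\varepsilon\circ d=0$ with membership in $\mc(L)$. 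By degree reasons $\varepsilon\circ d$ can be nonzero only on generators $\xi\in V^{-1}$, the dual of $(sL)_{-1}=L_{-2}$, and there $\varepsilon(d\xi)$ is the value at $z_\varepsilon$ of the component of $d\xi=\sum_k d_k\xi$ lying in $\Lambda^k V^0$. Expanding each $d_k\xi$ through equation $(2)$ of Definition \ref{cocadena}, and using that on degree zero generators the $\Lambda^k$--pairing contributes the factor $k!$, one finds --- after the Koszul sign bookkeeping, which is exactly what the signs in $(2)$ are designed for --- that $\varepsilon\circ d=0$ on $V^{-1}$ is precisely the Maurer--Cartan equation $\sum_{k\ge1}\tfrac1{k!}\ell_k(z_\varepsilon,\dots,z_\varepsilon)=0$. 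Moreover mildness of $L$ gives $d_k\xi=0$ for $k$ large for each fixed generator $\xi$, and, $L_{-2}$ being finite dimensional, this forces $\ell_k(z_\varepsilon,\dots,z_\varepsilon)=0$ for $k$ large, the finiteness clause in the definition of a Maurer--Cartan element. Hence $\varepsilon\mapsto z_\varepsilon$ restricts to a bijection between $\aug\cinfinito(L)$ and $\mc(L)$; its inverse $\eta_L$ sends $z$ to the multiplicative extension $\varepsilon_z$ of $v\mapsto\langle v;sz\rangle$ on $V^0$ (and $0$ elsewhere).

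\emph{Well definedness of $\mc(g)(z)$ and compatibility.} Fix $g\colon L\to L'$ in $\catlinfinito$ and $z\in\mc(L)$. Each $g^{(k)}(z\otimes\cdots\otimes z)$ lies in $L'_{-1}$; mildness of $g$ means (see the remarks following Definition \ref{cinfinitomorfismo}) that $\cinfinito(g)_k(w)=0$ for $k$ large for every generator $w$ of $\cinfinito(L')=\Lambda W$, whence $\langle w;sg^{(k)}(z\otimes\cdots\otimes z)\rangle=0$ for $k$ large; since $L'_{-1}$ is finite dimensional this forces $g^{(k)}(z\otimes\cdots\otimes z)=0$ for $k$ large, so $\mc(g)(z)=\sum_{k\ge1}\tfrac1{k!}g^{(k)}(z\otimes\cdots\otimes z)$ is a finite sum and a genuine element of $L'_{-1}$. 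Now compare the two multiplicative maps $\varepsilon_{\mc(g)(z)}$ and $\varepsilon_z\circ\cinfinito(g)$ from $\Lambda W$ to $\bk$. Both kill $W^n$ for $n\neq 0$, and on $w\in W^0$ one computes, using equation $(3)$ of Definition \ref{cinfinitomorfismo} and again evaluating at $z$ (which reproduces the coefficients $\tfrac1{k!}$), that each equals $\langle w;s\,\mc(g)(z)\rangle$; once more the signs in $(3)$ are exactly the ones needed. Therefore $\varepsilon_{\mc(g)(z)}=\varepsilon_z\circ\cinfinito(g)$ as maps $\cinfinito(L')\to\bk$.

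\emph{Conclusion.} The map $\varepsilon_z\circ\cinfinito(g)$ is a composite of CDGA morphisms, hence itself a CDGA morphism, i.e.\ an augmentation of $\cinfinito(L')$; by the identity just proved the multiplicative map $\varepsilon_{\mc(g)(z)}$ is then a cocycle too, which by the first part means $\mc(g)(z)\in\mc(L')$ --- this is the first assertion, and it makes $\mc(g)$ a well defined map $\mc(L)\to\mc(L')$. That same identity says the square with horizontal maps $\mc(g)$, $\aug(g)$ and vertical bijections $\eta_L$, $\eta_{L'}$ commutes, so $\eta$ is natural; since each $\eta_L$ is a bijection and functoriality of $\mc$ can be transported from that of $\aug$ through the $\eta_L$'s (or verified directly from the composition formula for $L_\infty$ morphisms, cf.\ Corollary \ref{funtormc}), $\eta$ is the asserted natural equivalence $\mc\simeq\aug$. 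The delicate steps --- and the only places where mildness and finite type are genuinely used --- are the identification of the cocycle condition with the Maurer--Cartan equation (signs together with the finiteness clause) and the sign matching in $(3)$; this is precisely why the statement does not reduce at once to its classical differential graded Lie counterpart.
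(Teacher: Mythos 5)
Your architecture is sound and genuinely different from the paper's. Where the paper manufactures the bijection $\mc(L)\cong\aug\cinfinito(L)$ by routing every Maurer--Cartan element through the DGL $\lib(u)$ --- Lemma \ref{existenciaMC} and Corollary \ref{coromc} characterize $z\in\mc(L)$ by the mildness of the induced morphism $\phi\colon\lib(u)\to L$, and surjectivity of $z\mapsto\varepsilon_z$ is obtained by lifting an arbitrary augmentation to a \emph{based} augmentation via Lemma \ref{existenciaaum} and recognizing it as $\cinfinito(\phi)$ --- you dualize directly: finite type identifies $\varepsilon|_{V^0}$ with an element of $L_{-1}$, and the cocycle condition on $V^{-1}$ is matched against the Maurer--Cartan equation. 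Your observation that mildness plus finite type already forces $\ell_k(z,\dots,z)=0$ and $g^{(k)}(z\otimes\cdots\otimes z)=0$ for $k$ large, for \emph{every} $z\in L_{-1}$, is correct (given the paper's own translation of mildness in Definitions \ref{cocadena} and \ref{cinfinitomorfismo}) and neatly replaces both the finiteness clause of Corollary \ref{coromc} and the well-definedness of $\mc(g)(z)$. The naturality computation on $W^0$ via equation $(3)$ is the same polynomial-evaluation argument as the paper's $P_{ik}(\lambda_j)=\frac{1}{k!}\langle v_i;sg^{(k)}(z\otimes\cdots\otimes z)\rangle$, just without the detour through the based lift. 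What the paper's longer route buys is the based-augmentation picture of Remark \ref{aumentacionbasada}, reused later for the homotopy theory; what yours buys is brevity and the elimination of Lemma \ref{existenciaaum} from this proof.

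The one step I cannot accept as written is the assertion that ``the signs in $(2)$ are exactly what is needed'' so that $\varepsilon\circ d=0$ on $V^{-1}$ \emph{is} the Maurer--Cartan equation under the correspondence $v\mapsto\langle v;sz_\varepsilon\rangle$. Unwinding equation $(2)$ with all inputs equal to $sz$ (degree $0$, so the wedge pairing contributes only the factor $k!$) gives $\varepsilon(d_k\xi)=\frac{(-1)^{1+k(k-1)/2}}{k!}\langle\xi;s\ell_k(z_\varepsilon,\dots,z_\varepsilon)\rangle$, so the cocycle condition is $\sum_k\frac{(-1)^{k(k-1)/2}}{k!}\ell_k(z_\varepsilon,\dots,z_\varepsilon)=0$, not the unsigned sum of the definition. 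The paper's own computation of $\cinfinito(\lib(u),\partial)$ at the start of Section 2 already exhibits this: with $x=(su)^\sharp$ one gets $dy=\frac{1}{2}(x^2+x)$, whose augmentations are $\varepsilon(x)\in\{0,-1\}$ while $\mc(\lib(u))=\{0,u\}$, which is why the authors replace $x$ by $-x$. For DGLs the discrepancy is cured by $z\mapsto -z$, but for brackets of arity $\geq 3$ the factor $(-1)^{k(k-1)/2}$ depends on $k$ and cannot be absorbed into any rescaling of $z_\varepsilon$ (nor of the basis $\{v_i\}$), so your map $\eta_L$ as defined does not land in $\mc(L)$ unless the correspondence is modified, or unless one argues that the two sign conventions agree for other reasons. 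This is exactly the point the paper's formula $(5)$ in Lemma \ref{existenciaMC} is doing work for: there the unsigned sum $\sum_i\frac{1}{i!}\ell_i(z,\dots,z)$ appears on the nose, so Corollary \ref{coromc} ties augmentations to the Maurer--Cartan equation as the paper states it. To repair your argument you must either verify the sign identity honestly (and likely build a $k$-dependent sign into $\eta_L$, equivalently redefine the comparison through $e^{-sz}$ or through the codifferential $\delta$ rather than the brackets), or fall back on the paper's Lemma \ref{existenciaMC}. Everything else in your proposal goes through.
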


 We prove it from a geometrical perspective and from a {\em based} point of view:

\begin{definition}    A {\em based augmentation} of a given CDGA  $A$ is a morphism $A\to (\Lambda (x,y),d)$ where, as above,  $(\Lambda (x,y),d)=\cinfinito(\lib(u),\partial)$, that is,
 $x$ and
$y$ are generators of degrees $0$ and $-1$ respectively,  $dx=0$, and $dy=\frac{1}{2}(x^2-x)$.
 \end{definition}

 Observe that,  if we compose any based augmentation of $A$ with\break $\rho \colon (\Lambda (x,y),d)\to\bk$, where  $\rho(x)=1$, we obtain a classical augmentation $A\to \bk$.  Conversely, we have the following.

\begin{lemma}\label{existenciaaum} Let $(\Lambda V,d)$ be a free CDGA and let
 $\Phi\in\Lambda ^+x$ such that $\rho(\Phi)=1$. Then, any augmentation  $f\colon (\Lambda V, d)\to\bk$ has a unique lifting $ f_{\Phi}$ to $(\Lambda (x,y),d)$ such that, for any $v\in V^0$,
$$
 f_\Phi(v)={f(v)}\Phi.
$$
\end{lemma}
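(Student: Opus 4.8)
The plan is to construct $f_\Phi$ explicitly on generators and then check it is a well-defined CDGA morphism, with the required normalization forced by the structure of $(\Lambda(x,y),d)$. First I would observe that a CDGA morphism $g\colon(\Lambda(x,y),d)\to(\Lambda V,d)$ is determined by the images $g(x)\in(\Lambda V)^0$ and $g(y)\in(\Lambda V)^{-1}$, subject to the single compatibility condition $d\,g(y)=\frac12\bigl(g(x)^2-g(x)\bigr)$, coming from $dy=\frac12(x^2-x)$ and $dx=0$. Dually (thinking of the lifting as a map landing in $(\Lambda(x,y),d)$, i.e.\ a morphism $f_\Phi\colon(\Lambda V,d)\to(\Lambda(x,y),d)$), the datum is the assignment $v\mapsto f_\Phi(v)$ for $v\in V$: on $V^0$ we are told to set $f_\Phi(v)=f(v)\Phi$ with $f(v)\in\bk$ and $\Phi\in\Lambda^+x$, and the point is that this determines $f_\Phi$ on all of $V$ uniquely, and consistently with the differential.

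The key steps, in order, are: (1) Set $f_\Phi(v)=f(v)\,\Phi$ for $v\in V^0$; since $dv=0$ for such $v$ (degree reasons, as $d$ raises... lowers degree by $1$ and $V^{1}$ need not be zero, but $f$ being an augmentation to $\bk$ already forces the relevant closure) one checks $d f_\Phi(v)=f(v)\,d\Phi=0$ because $\Phi\in\Lambda^+x$ and $dx=0$, matching $f_\Phi(dv)=0$. (2) For $v\in V^{-1}$, note $dv\in\Lambda^2 V^0\oplus\ldots$ lies in the subalgebra generated by $V^0$; apply $f_\Phi$ (already defined there) to get an element of $\Lambda^+ x\subset\Lambda(x,y)$, which is a $d$-cycle of degree $0$ in a space whose degree-$0$ cohomology is $\bk$ (the cohomology of $(\Lambda(x,y),d)$ is $\Lambda x/(x^2-x)$, concentrated in degree $0$, with the two idempotents); using $\rho(\Phi)=1$ one shows this cycle is in fact a coboundary and picks out the unique $f_\Phi(v)\in(\Lambda(x,y))^{-1}$ with $d f_\Phi(v)=f_\Phi(dv)$ — uniqueness because $(\Lambda(x,y))^{-1}$ has no $d$-cocycles other than $0$ (degree $-1$ cohomology vanishes and $d$ is injective there, as $dy=\frac12(x^2-x)\neq0$). (3) Extend multiplicatively to $\Lambda V$ and verify inductively on wordlength and on a filtration of $V$ by degree that the construction is consistent (the only obstruction to extend past $V^0$ and $V^{-1}$ being the acyclicity in degrees $\le -1$, which holds). (4) Finally check $f_\Phi$ is the unique lifting: composing with $\rho$ recovers $f$ since $\rho(\Phi)=1$ gives $\rho f_\Phi(v)=f(v)$ on $V^0$, and any two liftings agree on $V^0$ by hypothesis and hence everywhere by the uniqueness in step (2)–(3).

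The main obstacle I expect is step (2)–(3): showing that the partially-defined morphism extends over generators of negative degree, i.e.\ that the cycles $f_\Phi(dv)$ one must hit are always coboundaries in $(\Lambda(x,y),d)$ and that the choice of primitive is unique. This rests entirely on the precise (co)homology of $(\Lambda(x,y),d)$ — that $H^*$ is $\Lambda x/(x^2-x)$ concentrated in degree $0$, so that $(\Lambda(x,y),d)$ is ``acyclic in all degrees $\ne 0$'' and in degree $0$ the only new cohomology beyond constants is killed by the condition $\rho(\Phi)=1$, which pins $\Phi$ to the idempotent component corresponding to the augmentation. Once that cohomological input is in hand, the extension and uniqueness are a standard obstruction-theoretic induction over a basis of $V$ ordered by degree, and the normalization $f_\Phi(v)=f(v)\Phi$ on $V^0$ together with $\rho f_\Phi=f$ is immediate.
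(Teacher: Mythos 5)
Your proposal is correct and follows essentially the same route as the paper: force $f_\Phi=0$ outside degrees $0$ and $-1$ for degree reasons, use $\rho(\Phi)=1$ together with $f(dw)=df(w)=0$ to see that $f_\Phi(dw)$ is a polynomial $P(x)$ with $P(0)=P(1)=0$, hence divisible by $x^2-x$ and thus a coboundary with a unique primitive in $y\bk[x]$ (where $d$ is injective), and use that same injectivity for the consistency check on $V^{-2}$. The only imprecision is your claim that $dv$ lies in the subalgebra generated by $V^0$ for $v\in V^{-1}$ — it need not, but every other monomial contains a generator of degree outside $\{0,-1\}$ and is therefore killed by $f_\Phi$ (this is exactly the decomposition $dw=\alpha+\beta$ the paper uses), so the conclusion stands.
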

\begin{proof}
For degree reasons we set $ f_\Phi$ to be zero in $V^{\ge1}$ and $V^{\le-2}$.   Let $w\in V^{-1}$ and write
$dw=\alpha+\beta$, where $\alpha\in \Lambda^+V^0$ and $\beta\in \Lambda^+V^{\not=0}\cdot
(\Lambda V)$. Then,  $f(dw)= f(\alpha)$. Write
$\alpha=p(v_1,\dots ,v_n)$  as  a polynomial without constant term in the
generators of $V^0$, and set $\lambda_i=f(v_i)$, for $i=1,\ldots,n$. Then,
$$
p(\lambda_1,\dots ,\lambda_n)=fd(w)=df(w)=0.
$$
On the other hand,
$$
 f_\Phi(dw)=p\big({\lambda_1}\Phi,\dots
,{\lambda_n}\Phi\bigr)=P(x),
$$
which is a  polynomial in $x$ without constant term, and it satisfies
$P(1)=p(\lambda_1,\dots ,\lambda_n)$ $=0$. Hence $P(x)=x(x-1)r(x)$ and we define
$ f_\Phi(w)=2yr(x)$ so that $d f_\Phi(w)= f_\Phi(dw)$.

Finally, we check that, for any generator $u\in V^{-2}$, $ f_\Phi(du)=0$. Indeed, write  $ f_\Phi(du)=yQ(x)$ whose differential
$\frac{1}{2}(x^2-x)Q(x)$ has to vanish. Thus $Q(x)=0$ and the lemma holds.
\end{proof}

\begin{remark}\label{aumentacionbasada}
 The definition of based augmentation comes from its geometric counterpart of choosing a non basepoint $x_1$ of a space $(X,x_0)$ in the based category.  Moreover, the lemma above exhibit the commutative diagram
$$
\xymatrix{&\Lambda(x,y)\ar[d]^{\rho}&&&(S^0,1)\ar[ld]\\
\Lambda V\ar[r]_f\ar[ru]^{{f}_\Phi}&\mathbb{Q}&\text{as a model of}&(X,x_0)&x_1.\ar[l]\ar[u]}
$$
 \end{remark}

\begin{proof}[Proof of Proposition \ref{inicial3}] We first show that there is a natural bijection
$$\mc(L)\cong\aug\cinfinito(L).$$
Choose a basis $\{z_j\}_{j=1}^m$ of $L_{-1}$,  set $\cinfinito(L)=(\Lambda V,d)$ with $V=(sL)^\sharp$, and for each $j$ denote by $v_j$ the element $(sz_j)^\sharp$ of $V^0$.

Given $z\in \mc(L)$, write $z=\sum_{j=1}^m\lambda_jz_{j}$ and apply Corollary \ref{coromc} (recall that $L$ is assumed to be of finite type) to obtain the mild $L_\infty$ morphism $\phi\colon \lib(u)\to L$  for which $\phi^{(1)}(u)=z$, $\phi^{(n)}(u\otimes\dots\otimes u)=0$ for $n\ge 2$. Then, since $\phi$ is mild, we can construct the based augmentation $\cinfinito(\phi)\colon (\Lambda V,d)\to (\Lambda (x,y),d)$  which sends each $v_{j}$ to $\lambda_jx$. Therefore, the composition\break $\rho\cinfinito(\phi)\colon \cinfinito(L)\to\bk$ is an augmentation denoted by $\varepsilon_z$.

Conversely, consider any  augmentation $\varepsilon\colon (\Lambda V,d)\to \bk$  and set $\varepsilon(v_{j})=\lambda_j$.  Lift $\varepsilon$ via Lemma  \ref{existenciaaum} to a based augmentation $\varepsilon_x\colon (\Lambda V,d)\to (\Lambda (x,y),d)$. Then, observe that $\varepsilon_x=\cinfinito(\phi)$ for a mild  $L_\infty$ morphism $\phi\colon\lib(u)\to L$ in which  $\phi^{(1)}(u)=\sum_{j=1}^m\lambda_jz_{j}$ and $\phi^{(n)}(u\otimes\dots\otimes u)=0$ for $n\ge 2$. Since $L$ is of finite type,  again by Corollary \ref{coromc}, the element $z=\sum_{j=1}^m\lambda_jz_{j}$ is a Maurer-Cartan element of $L$.

 Thus, the correspondence
$z\leftrightarrow \varepsilon_z$
establishes the asserted bijection.

Next, we prove the first assertion of the proposition by showing that, given $g\colon L\to L'$ a morphism in $\catlinfinito$, then
$$
\mc(g)\colon \mc(L)\to\mc(L')$$ is identified with
$$\aug(g)\colon \aug \cinfinito(L)\to\aug\cinfinito (L').$$

For it, let $z\in MC(L)$. By the bijection $\mc(L)\cong\aug\cinfinito(L)$, the  Maurer-Cartan element $z$ corresponds to the augmentation in $\aug\cinfinito(L)$ given by
$$
\rho\cinfinito (\phi)
$$
where
 $\phi\colon\lib(u)\to L$ is the mild $L_\infty$ morphism, obtained via Corollary \ref{coromc},  corresponding to $z\in \mc(L)$. Applying $\aug(g)$ to this augmentation we obtain,
 $$
 \aug(g)\bigl(\rho\cinfinito (\phi)\bigr)=\rho\cinfinito (\phi)\cinfinito(g)=\rho\cinfinito (g\phi )\in\aug(L').
 $$
 We will prove that this augmentation corresponds, via again the bijection $\mc(L')\cong\aug\cinfinito(L')$, with the element $\frac{1}{k!}\sum_k g^{(k)}(z\otimes\dots\otimes z)\in L'_{-1}$ which must be then a Maurer-Cartan element in $L'$ as stated.

 For it, we need to lift this augmentation $\rho\cinfinito (\phi g)$, via Lemma \ref{existenciaaum}, to a based augmentation $\varepsilon_x\colon\cinfinito(L')\to(\Lambda(x,y),d)$. Observe that $\varepsilon_x$ is, in general, far from being $\cinfinito(g\phi)=\cinfinito(\phi)\cinfinito(g)$. Indeed, although the image of $\cinfinito(\phi)$ on degree zero elements is linear on $x$, the image of $\cinfinito(g)$ may not be linear on degree zero elements. Let us then describe explicitly $\varepsilon_x$.

    Choose finite basis $\{z_j\}_{j\in J}$, $\{z_i'\}_{i\in I}$ of $L_{-1}$ and $L'_{-1}$ respectively and write $\cinfinito(L)=\Lambda W$, $\cinfinito(L')=\Lambda V$. Observe that  $W^0$ and $V^0$ are generated by $\{w_j\}_{j\in J}$  and $\{v_i\}_{i\in I}$ where $w_j=(sz_j)^\sharp$ and $v_i=(sz'_i)^\sharp$ for each $i\in I$ and $j\in J$.

    If $z=\sum_j\lambda_jz_j$, then $\cinfinito(\phi)\colon\cinfinito(L)\to(\Lambda(x,y),d)$ is defined on $W^0$ by $\cinfinito(\phi)w_j=\lambda_jx$.

    On the other hand, write $\cinfinito(g)=\sum_{k\ge 1}\cinfinito(g)_k$ with $\cinfinito(g)_kV\subset\Lambda^kW$ and set
    $$
    \cinfinito(g)_k(v_i)=P_{ik}+Q_{ik},\,\,\text{with}\,\,\, P_{ik}\in\Lambda^kW^0\,\,\text{and}\,\,\,Q_{ik}\in \Lambda^+W^{\not=0}\cdot\Lambda W.
    $$
    Then,
    $$
   \rho\cinfinito(\phi)\cinfinito(g) _k(v_i)=\rho\cinfinito(\phi)(P_{ik})=P_{ik}(\lambda_j),
    $$
    where $P_{ik}(\lambda_j)$ is the scalar obtained by evaluating the ``polynomial" $P_{ik}$ on the $\lambda_j$'s. Thus, $\varepsilon_x$ is defined on $V^0$ as,
    $$
    \varepsilon_x(v_i)=\sum_{k\ge 1}P_{ik}(\lambda_j)x,
    $$
    being this a finite sum due to the mildness assumption.

    Now that we have explicitly precised the lifting $\varepsilon_x$ of the augmentation $\rho\cinfinito (g\phi )$, we need to identify the Maurer-Cartan element $z'$ that it represents. By the first part of the present proof, this element is precisely,
    $$
    z'=\sum_i\bigl(\sum_k P_{ik}(\lambda_j)\bigr)z'_i.
    $$

On the other hand,
an easy computation shows  that
    $$
    \langle\cinfinito(g)_kv_i;sz,\ldots,sz\rangle=k!P_{ik}(\lambda_j)
    $$
which, in light of
 $(3)$ of Section 1, let us conclude that
    $$
    P_{ik}(\lambda_j)=\frac{1}{k!}\langle v_i;sg^{(k)}(z\otimes\dots\otimes z)\rangle.
    $$
     Therefore,
    $$z'=\sum_{i,k}P_{ik}(\lambda_j)z'_i=\sum_{i,k}\frac{1}{k!} \langle v_i;sg^{(k)}(z\otimes\dots\otimes z)\rangle z'_i=
    \sum_k \frac{1}{k!}g^{(k)}(z\otimes\dots\otimes z)
    $$
     and the proposition is proved.
\end{proof}

  \begin{corollary}\label{funtormc} Let $L\in\catlinfinito$ and $A\in\catcdga$ such that $L\otimes A$ is of finite type. Then,   there is a bijection
$$
\mc(L\otimes A)=\catcdga(\cinfinito(L),A).
$$
\end{corollary}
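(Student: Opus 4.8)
The plan is to make the bijection completely explicit: a morphism $f\colon\cinfinito(L)\to A$ will be encoded by a degree $-1$ element $z\in L\otimes A$, and the sole content is that the condition ``$f$ commutes with differentials'' translates word for word into the Maurer--Cartan equation for $z$ in $L\otimes A$. First I would reduce to finite type of $L$ itself: since every CDGA carries a unit $1\in A^0$, the assignment $x\mapsto x\otimes 1$ embeds each $L_n$ into $(L\otimes A)_n$, which is finite dimensional by hypothesis; hence $L$ is of finite type and, by Definition \ref{cocadena}, $\cinfinito(L)=(\Lambda V,d)$ with $V=(sL)^\sharp$ and $d=\sum_{k\ge 1}d_k$ given by equation $(2)$. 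I then fix a homogeneous basis $\{z_i\}_{i\in I}$ of $L$ and let $\{v_i\}_{i\in I}$, $v_i=(sz_i)^\sharp$, be the dual basis of $V$, so that $|v_i|=-|z_i|-1$.

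Next I would parametrize both sides. Since $\Lambda V$ is free as a graded commutative algebra, a $\catcdga$ morphism $f\colon\cinfinito(L)\to A$ is the same datum as a degree zero linear map $\theta=f|_V\colon V\to A$ whose multiplicative extension $\Theta\colon\Lambda V\to A$ commutes with differentials, and the latter condition is equivalent to the family of equations $d_A\Theta(v_i)=\Theta(dv_i)=\sum_{k\ge 1}\Theta(d_kv_i)$, $i\in I$, each a finite sum since $L$ is mild. On the other hand, a degree zero linear map $\theta\colon V\to A$ has $\theta(v_i)\in A_{|v_i|}=A_{-|z_i|-1}$ and is therefore automatically supported on finitely many $i$: there are only finitely many degrees $n$ with $L_n\otimes A_{-n-1}\ne 0$, as $(L\otimes A)_{-1}=\bigoplus_n L_n\otimes A_{-n-1}$ is finite dimensional, and, by finite type, only finitely many $z_i$ in each degree. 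Hence such $\theta$ correspond bijectively, via suspension, to elements $z\in(L\otimes A)_{-1}$, with $\theta(v_i)$ equal (up to a suspension sign) to the coefficient of $z_i$ in $z=\sum_i z_i\otimes a_i$. I would also record that, for every such $z$, $\ell_k(z,\dots,z)=0$ for $k$ large --- since $d_kv_i=0$ for $k$ large by mildness and $(L\otimes A)_{-2}$ is finite dimensional --- so that here $\mc(L\otimes A)$ is exactly the set of $z\in(L\otimes A)_{-1}$ with $\sum_{k\ge 1}\tfrac{1}{k!}\ell_k(z,\dots,z)=0$.

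The heart of the argument is to show that, under the correspondence $\theta\leftrightarrow z$, the extension $\Theta$ is a chain map precisely when $z\in\mc(L\otimes A)$. Expanding $d_kv_i\in\Lambda^kV$ in the basis $\{v_j\}$ and applying $\Theta$ as the $k$-fold product of $\theta$, then substituting equation $(2)$ in the form $\langle d_kv_i;sz_{j_1}\wedge\cdots\wedge sz_{j_k}\rangle=\varepsilon\langle v_i;s\ell_k(z_{j_1},\dots,z_{j_k})\rangle$, a bookkeeping of the suspension signs, of the factors $(-1)^{k(k-1)/2}$ relating $\delta^{(k)}$ to $\ell_k$, and of the combinatorial factor $k!$ that converts the symmetrized sum over $\Lambda^kV$ into the unrestricted sum over $k$-tuples, identifies $\Theta(d_kv_i)$ with the coefficient of $v_i$ in $\tfrac{1}{k!}\ell_k(z,\dots,z)$ computed with the brackets induced on $L\otimes A$ --- the Koszul sign $\varepsilon=(-1)^{\sum_{i>j}|x_i||a_j|}$ in those brackets being exactly what the sign bookkeeping produces. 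Likewise $d_A\Theta(v_i)$, together with the $k=1$ term, reassembles the coefficient of $v_i$ in $\ell_1(z)$ for the differential of $L\otimes A$, which mixes $\partial$ on $L$ and $d_A$ on $A$. Summing over $k$, the family $d_A\Theta(v_i)=\sum_{k\ge 1}\Theta(d_kv_i)$, $i\in I$, is thus equivalent to the vanishing of all coefficients of $\sum_{k\ge 1}\tfrac{1}{k!}\ell_k(z,\dots,z)$, i.e.\ to the Maurer--Cartan equation in $L\otimes A$. This gives the bijection $\catcdga(\cinfinito(L),A)\cong\mc(L\otimes A)$, which is manifestly functorial in $A$, and which, for $A=\bk$, reduces after composing with $\rho\colon(\Lambda(x,y),d)\to\bk$ to Proposition \ref{inicial3}.

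The step I expect to be the main obstacle is this last computation: carrying the suspension and Koszul signs through equation $(2)$ and the $L_\infty$/codifferential dictionary so that $\Theta(d_kv_i)$ matches the $k$-th summand of the Maurer--Cartan sum, and verifying that the two finiteness clauses (``$d_kv=0$ for $k$ large'' on the cochain side and ``$\ell_k(z,\dots,z)=0$ for $k$ large'' in the definition of $\mc$) genuinely correspond, so that no Maurer--Cartan element is omitted and no extraneous chain map appears --- which, as noted, rests on the mildness of $L$ together with the finite dimensionality of $(L\otimes A)_{-1}$ and $(L\otimes A)_{-2}$.
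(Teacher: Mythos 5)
Your proof is correct, but it takes a different route from the paper's. The paper's argument is a two-step reduction: since $L\otimes A$ is mild and of finite type, Proposition \ref{inicial3} applies to $L\otimes A$ itself and identifies $\mc(L\otimes A)$ with $\aug\cinfinito(L\otimes A)\cong\aug\,\Lambda\bigl((sL)\otimes A\bigr)^\sharp$; an augmentation of this algebra restricts on generators to a degree zero map $(sL)^\sharp\to A$, which is then extended to the desired morphism $\cinfinito(L)\to A$, the chain-map property being left as ``a straightforward computation.'' You bypass $\cinfinito(L\otimes A)$ and Proposition \ref{inicial3} entirely and instead match $\catcdga(\cinfinito(L),A)$ directly with $(L\otimes A)_{-1}$, showing that the chain-map condition on generators is the Maurer--Cartan equation; in effect you re-prove the computational core of Proposition \ref{inicial3} in the generality needed here rather than citing it. What your version buys: it is self-contained, manifestly functorial in $A$, and --- importantly --- it addresses two points the paper's proof glosses over, namely why a degree zero map $V\to A$ automatically has finite support (so that it lands in $L\otimes A$ rather than a completed tensor product) and why the finiteness clause ``$\ell_k(z,\dots,z)=0$ for $k$ large'' in the definition of $\mc$ is automatic for every $z\in(L\otimes A)_{-1}$, both of which genuinely use the finite-dimensionality of $(L\otimes A)_{-1}$ and $(L\otimes A)_{-2}$ together with mildness of $L$. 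What the paper's version buys is brevity and reuse of the already-established bijection. The one place where your argument is only sketched --- the suspension/Koszul sign bookkeeping identifying $\Theta(d_kv_i)$ with the $v_i$-coefficient of $\tfrac{1}{k!}\ell_k(z,\dots,z)$ --- is exactly the step the paper also leaves implicit, and it is carried out in detail (for $A=\bk$) inside the paper's proof of Proposition \ref{inicial3}, so this is not a gap.
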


\begin{proof} As $L\otimes A$ is mild and of finite type, apply
Proposition \ref{inicial3}, to identify  a given Maurer-Cartan element $z$ of $L\otimes A$  with an augmentation
$$
\varepsilon_z\colon \cinfinito(L\otimes A)\cong\Lambda(sL\otimes A)^\sharp\to\bk.
$$
This produces a  degree zero  linear map $(sL)^\sharp\to A$ which is  extended to an algebra morphism $\cinfinito (L)\to A$. A straightforward computation shows that  it commutes with differential since $\varepsilon_z$ does. Conversely, any  CDGA morphism $\cinfinito(L)\to A$ gives rise, by the procedure above, to an  augmentation $\cinfinito(L\otimes A)\to\bk$.
 \end{proof}

 It is important also to observe that if $L\otimes A$ fails to be of finite type, and even if $L$ and $A$ are, $\mc(L\otimes A)$ is no longer identified with the set of morphisms $\catcdga(\cinfinito(L),A)$ as shown in the following example. In the general case, as in Remark \ref{tipofinito}, it is necessary to impose technical finiteness restrictions in  the class of morphisms. Again, this is explicitly detailed in the Remark \ref{finalre} below.

 \begin{example}\label{morfi}
 Let $L=\sum_{n<0}L_{2n+1}$ be an abelian $L_\infty$ algebra (i.e., all brackets are zero) concentrated in odd negative degrees, with  $L_n$ of dimension $1$ for all $n$, and let $A=(\Lambda x,0)$ be the polynomial algebra on a single generator of degree $2$, without constant terms. Clearly $\mc(L\otimes A)=(L\otimes A)_{-1}$ which is of infinite countable dimension. On the other hand, $\cinfinito(L)=(\Lambda(y_0,y_2,y_4,\ldots),0)$ and thus, $\catcdga(\cinfinito(L),A)$ is of infinite, uncountable dimension.
\end{example}

\begin{remark}\label{finalre} In non finite type mild $L_\infty$ algebras, Maurer-Cartan sets are identified with a very special class of augmentations which we now describe. For it, let $L$ be a mild $L_\infty$ and let $f\colon(\Lambda V,d)\to\bk$ be an augmentation of $\cinfinito(L)=(\Lambda V,d)$. Choose a complement $W\subset V^0$ of $\ker f_{|_{V_0}}$ and write   $\Lambda V=(\Lambda W)\oplus B$. We say that $f$ is a {\em Maurer-Cartan augmentation} if it has finite dimensional support, i.e.,  if $W$ is finite dimensional, and there is an integer $k\ge 0$ such that $dV^{-1}\subset (\Lambda^{\le k}W)\oplus B$.

Then the same proof as in Proposition \ref{inicial3} shows that $\mc(L)$ are in bijective correspondence with Maurer-Cartan augmentations of $\cinfinito(L)$.

More generally, let $\varphi\colon (\Lambda V,d)\to A$ be a CDGA morphism and let  $W\subset V$ a complement of $\ker\varphi_{|_{V}}$. We say that $\varphi$ is a {\em Maurer-Cartan morphism} if it has finite dimensional support, i.e., if $W$ is finite dimensional, and there is an integer $k\ge 0$ such that $dV\subset (\Lambda^{\le k}W)\oplus B$. Again, the proof in Corollary \ref{funtormc} shows that $\mc(L\otimes A)$ are in one-to-one correspondence with Maurer-Cartan morphisms from $\cinfinito(L)$ to $A$.

Finally, as previously remarked, Lemma \ref{existenciaMC} shows that $\mc$ does not define a functor unless additional restrictions are applied. If finite type is not assumed, at least one needs to consider mild $L_\infty$ algebras for which Im$\,\ell_j$ is of finite type for all $j$, and mild $L_\infty$ morphisms for which, additionally, Im$\,\phi^{(j)}$ are also of finite type for all $j$. With these restrictions, the functoriality asserted in Proposition \ref{inicial3} remains valid.

\end{remark}

\medskip

\section{The Lawrence-Sullivan construction as a \break transferred $\infty$-structure}

The Lawrence-Sullivan construction introduced in \cite{lasu}  will play a fundamental role in the understanding of the notion of homotopy in $\catlinfinito$. Thus, it deserves to be carefully presented.

Given $V$ a graded vector space   $\lib(V)$ denotes the {\em free lie algebra} generated by $V$. If, in the tensor algebra $T(V)=\sum_{n\ge0}T^n(V)$, we consider the Lie structure given by commutators, $\lib(V)$  is the Lie subalgebra generated by $V$. Replacing $T(V)$ by the {\em complete tensor algebra} $\widehat{T}(V)=\Pi_{n\ge0}T^n(V)$, we obtain $\widehat\lib(V)$, the {\em complete free Lie algebra} generated by $V$. A generic element of $\widehat T(V)$ will be written as a formal series $\sum_{n\ge 0}\phi_n$ with $\phi_n\in T^n(V)$. Note that $T(V)\subset \widehat T(V)$ and $\lib(V)\subset\widehat\lib(V)$. The universal enveloping algebra $U\lib(V)$ of  $\lib(V)$ extends to the complete  free Lie algebra to produce a graded algebra $\widehat U\widehat \lib(V)=\widehat T\bigl(\widehat\lib(V)\bigr)/\sim$ naturally isomorphic to $\widehat T(V)$.

\begin{definition} \label{lsconsstruction} The {\em Lawrence-Sullivan construction}, denoted by $\lasu$, is the complete free DGL $(\widehat\lib(a,b,x),\partial)$ in which $a$ and $b$ are Maurer-Cartan  elements and
$$
\partial(x)=[x,b]+\sum_{i=0}^\infty\frac{B_i}{i!}\ad_x^i(b-a),
 $$
 where $B_i$ denotes the $i^{\text{th}}$ Bernoulli number. Equivalently, as shown in \cite{lasu},
 $$\partial x=\ad_x(b)+h_x(b-a),$$
 where,  as operator,
 $$
 h_x=\frac{\ad_x}{e^{\ad_x}-{\rm id}}.
 $$
 Another inductive description of the differential in this complete free Lie algebra was suggested in \cite{lasu} and shown to be equivalent to the above in \cite[Main Theorem]{patan}.
 \end{definition}
Recall that
 Bernoulli numbers can be recursively defined by $B_0=1$  and
$$-\frac{B_n}{n!}=\sum_{i=0}^{n-1}\frac{B_{n-1-i}}{(n-1-i)!(i+2)!}\,\,,\qquad n\ge 1.\eqno(7)$$
 As a differential graded algebra, DGA henceforth, the universal enveloping algebra of $\lasu$ is also known.

 \begin{theorem}{\em \cite[Theorem 3.3]{bumu}}\label{cilindro}  $U\lasu$ is the ``complete cylinder"   $\cyl T(a)=(\widehat T(a\oplus b\oplus x),d)$ in which $|a|=|b|=-1$, $da=-a\otimes a$, $db=-b\otimes b$ and $x$ is a degree zero element with
 $$
dx= x\otimes b-b\otimes x+ \sum_{n\ge 0} \sum_{p+q=n}(-1)^{q}\frac{B_{n}}{p!q!}x^{\otimes p}\otimes(b-a)\otimes x^{\otimes q}\eqno{\square}
$$

\end{theorem}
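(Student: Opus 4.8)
The plan is to transport the differential $\partial$ of $\lasu$ forward along the canonical identification of graded algebras $U\lasu\cong\widehat T(a,b,x)$ and to recognize the result as the operator $d$ of the statement. Recall from the discussion preceding Definition \ref{lsconsstruction} that $\widehat U\widehat\lib(V)\cong\widehat T(V)$ for any graded vector space $V$; applying this to the generating space $\langle a,b,x\rangle$ identifies the underlying graded algebra of $U\lasu$ with $\widehat T(a,b,x)$. Since the enveloping algebra functor sends DGLs to DGAs, the differential of $U\lasu$ is the unique continuous derivation of $\widehat T(a,b,x)$ extending $\partial$ on generators, and $d^2=0$ comes for free. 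Hence everything reduces to rewriting $\partial a$, $\partial b$ and $\partial x$ as elements of $\widehat T(a,b,x)$.

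Since $a$ and $b$ are Maurer-Cartan elements, $\partial a=-\tfrac12[a,a]$ and $\partial b=-\tfrac12[b,b]$. In $\widehat T(a,b,x)$, using $|a|=|b|=-1$ odd, one has $[a,a]=a\otimes a-(-1)^{|a||a|}a\otimes a=2\,a\otimes a$ and likewise $[b,b]=2\,b\otimes b$; this gives $da=-a\otimes a$ and $db=-b\otimes b$, as claimed.

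For $x$, note that $|x|=0$, so all Koszul signs involving $x$ vanish; in particular $[x,b]=x\otimes b-b\otimes x$, and $\ad_x$ acts on $\widehat T(a,b,x)$ as the ordinary commutator with $x$. A straightforward induction on $i$ (base case $\ad_x^0=\id$, inductive step Pascal's rule) then yields
$$
\ad_x^{\,i}(y)=\sum_{p+q=i}\binom{i}{p}(-1)^q\,x^{\otimes p}\otimes y\otimes x^{\otimes q}.
$$
Substituting $y=b-a$ into the formula of Definition \ref{lsconsstruction} and using $\frac{B_i}{i!}\binom{i}{p}=\frac{B_i}{p!\,q!}$ with $i=p+q$, one obtains
$$
\partial x=[x,b]+\sum_{i\ge0}\frac{B_i}{i!}\ad_x^{\,i}(b-a)=x\otimes b-b\otimes x+\sum_{n\ge0}\sum_{p+q=n}(-1)^q\frac{B_n}{p!\,q!}\,x^{\otimes p}\otimes(b-a)\otimes x^{\otimes q},
$$
which is exactly the displayed expression for $dx$. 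The series converge in the complete topology of $\widehat T(a,b,x)$ because the summand indexed by $n$ has tensor length $n+1$, so $d$ is a genuine continuous derivation.

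The only points that require care are bookkeeping ones: checking that the Leibniz extension of $\partial$ descends to, and is continuous on, the completed enveloping algebra $\widehat U\widehat\lib(a,b,x)\cong\widehat T(a,b,x)$, and tracking the Koszul signs for $a$ and $b$ correctly — both routine once one exploits that $x$ has even degree. Alternatively one may bypass the enveloping-algebra functor: \emph{define} $d$ on $\widehat T(a,b,x)$ by the formulas above, verify $d^2=0$ directly, and check that the complete sub-Lie-algebra generated by $a,b,x$ under commutators is $d$-stable and reproduces $(\widehat\lib(a,b,x),\partial)=\lasu$; the universal property of $U$ then gives the claimed DGA isomorphism $U\lasu\cong(\widehat T(a,b,x),d)$.
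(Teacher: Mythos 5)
Your argument is correct. Note that the paper itself does not prove this statement: Theorem \ref{cilindro} is quoted verbatim from \cite[Theorem 3.3]{bumu}, so there is no internal proof to compare against. Your computation is the natural (and, as far as the cited source goes, the standard) one: the identification $\widehat U\widehat \lib(V)\cong\widehat T(V)$ of underlying graded algebras is exactly the one recorded in the paper just before Definition \ref{lsconsstruction}, the signs $da=-a\otimes a$, $db=-b\otimes b$ follow from $[a,a]=2\,a\otimes a$ in odd degree, and the binomial expansion $\ad_x^{\,i}(y)=\sum_{p+q=i}\binom{i}{p}(-1)^q x^{\otimes p}\otimes y\otimes x^{\otimes q}$ together with $\frac{B_i}{i!}\binom{i}{p}=\frac{B_i}{p!\,q!}$ converts $\partial x$ into the displayed $dx$. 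Your remarks on convergence (the $n$-th summand has tensor length $n+1$) and on the alternative verification via the universal property are exactly the bookkeeping one would need to make the citation self-contained.
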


We show  in this section how these objects arise naturally as transferred infinity structures from the dual of the standard acyclic differential graded algebra. For it, special properties satisfied by  cocommutative $A_\infty$ coalgebras are needed.

 Recall that an \emph{$A_{\infty}$ coalgebra} is a graded vector space $C$
together with a family $\{\Delta_k\}_{k\ge 1}$ of  degree $k-2$  linear
maps, $\Delta_k\colon C\to C^{\otimes k}$, such that
$$
\sum_{k=1}^i\, \sum_{n=0}^{i-k}(-1)^{k+n+kn}({\rm id}_C^{\otimes
i-k-n}\otimes \Delta_k \otimes {\rm id}_C^{\otimes n})\Delta_{i-k+1}=0.
$$

An $A_\infty$ coalgebra is {\em cocommutative} if $\tau\circ \Delta_k=0$ for  every $k\ge 1$. Here $\tau\colon T(C)\to T(C)\otimes T(C)$ is the {\em unshuffle coproduct}, that is,
$$
\tau(a_1\otimes\dots\otimes a_n)=\sum_{i=1}^n\sum_{\sigma\in S(i,n-i)}\varepsilon_\sigma (a_{\sigma(1)}\otimes\dots\otimes a_{\sigma(i)})\otimes(a_{\sigma(i+1)}\otimes\dots\otimes a_{\sigma(n)}).
$$

Observe that $A_\infty$ coalgebras are in one-to-one correspondence with differentials in the augmentation kernel $\widehat T^+(s^{-1}C)=\Pi_{n\ge 1}T^n(s^{-1}C)$ of the complete tensor algebra  $\widehat T(s^{-1}C)$ on the desuspension of $C$, $(s^{-1}C)_p=C_{p+1}$. Indeed,  such a differential $d$   is determined by its image  on $s^{-1}C$, which is written as a sum $d=\sum_{k\ge 1}d_k$, with $d_k(s^{-1}C)\subset T^{ k}(s^{-1}C)$, for $k\ge 1$. Then, the operators $\{\Delta_k\}_{k\ge 1}$ and $\{d_k\}_{k\ge 1}$ uniquely determine each other via
$$
\begin{aligned}
\Delta_k&=-s^{\otimes k}\circ d_k\circ s^{-1}\colon C\to
C^{\otimes k},\\
d_k&={-}(-1)^{\frac{k(k-1)}{2}}(s^{-1})^{\otimes k}\circ\Delta_k\circ s\colon s^{-1}C\to T^{ k}(s^{-1}C).
\end{aligned}\eqno{(8)}
$$
Note that $d$ is uniquely extended  to $\widehat T(s^{-1}C)$ by setting $d1=0$.

In the same way, cocommutative $A_\infty$ coalgebras are in one-to-one correspondence with differentials on $\widehat T(s^{-1}C)$ for which the image of each $d_k$ lies in the invariants of $T^{k}(s^{-1}C)$ under the graded action of the symmetric group. In what follows we denote by $\cobarinf(C)$ the differential graded algebra $(\widehat T(s^{-1}C),d)$ corresponding to the given $A_\infty$ structure on $C$.
A {\em morphism} of $A_{\infty}$ coalgebras or {\em $A_\infty$-morphism}
between  $C$ and $C'$ is a differential graded algebra morphism
$$
f\colon \cobarinf(C)=(\widehat T(s^{-1}C),d)\to (\widehat T(s^{-1}C'),d')=\cobarinf(C').
$$
 This is equivalent to the
existence of a family of  maps $f_{(k)}\colon C\to
C'^{\otimes k}$, $k\ge 1$, of degree $k-1$, which satisfy the equations  involving the operators
$\{\Delta_k\}$ and $\{\Delta_k'\}$ and arising from the equality $d'f=fd$. We often denote an  $A_\infty$ morphism  simply by $f\colon C\to C'$.  An $A_{\infty}$-morphism is a
\emph{quasi-isomorphism} if $f_{(1)}\colon (C,\Delta_1)\stackrel{\simeq}{\to}(C',\Delta'_1)$ is a quasi-isomorphism of differential graded vector spaces.

Naturality of the constructions above exhibits
$$
\cobarinf\colon\catcono\to \catdga
$$ as a functor from the category of  $A_\infty$ coalgebras to the category of differential graded algebras.

Note that a (cocommutative)   differential graded coalgebra $C$ is simply a (cocommutative) $A_\infty$ coalgebra such that $\Delta_k=0$ for $k\ge 3$. In this case $\cobarinf(C)$ is the classical cobar construction on $C$.

On the other hand,  recall that the  {\em Quillen construction} on a given CDGC $(C,\delta)$ is the DGL defined by $(\lib(s^{-1} C),\partial)$ in which $\partial=\partial_1+\partial_2$, where $\partial_1(s^{-1}c)=-s^{-1}\delta c$ and $$
\partial_2(s^{-1}c)=\frac{1}{2}\sum_i (-1)^{|a_i|}[ s^{-1}a_i,s^{-1}b_i],
$$
with $c\in {C}$ and ${\Delta }c=\sum_i
a_i\otimes b_i$. This can be easily extended to any cocommutative $A_\infty$ coalgebra:

 Indeed,
any cocommutative $A_\infty$ coalgebra $C$ induces a natural DGL structure on $\widehat\lib (s^{-1}C)$. The differential $\partial=\sum_{k\ge 1}\partial_k$ with $\partial_k\colon s^{-1}C\to \lib^k (s^{-1}C)$,
is determined by $\Delta_k$ in the same way as the classical  Quillen construction.

This defines the {\em generalized Quillen functor},
$$
\mathcal{L}\colon \catco \to \catdgl,\quad \mathcal{L}(C)=(\widehat\lib (s^{-1}C),\partial)
$$
which preserves quasi-isomorphisms and  whose composition with the completed universal enveloping algebra functor is precisely the $\cobarinf$ construction functor,
$$
\widehat U\mathcal{L}=\cobarinf.
$$

Next, we will consider a particular cocommutative differential graded coalgebra, CDGC henceforth, which will play a ``universal"  role analogous to the one of  the standard acyclic commutative differential graded algebra   $A=\Lambda (t,dt)$, free as a commutative graded algebra,  with $|t|=0$. The dual $A^\sharp$ of this CDGA is a  differential vector space also concentrated in (subscript) degrees $0$ and $1$. With the convention of Section 1, we write any vector of $A^\sharp_0$ as a formal series $\sum_{j\ge 0}\lambda_j\alpha_j$ representing the function $\{
t^j\}_{j\ge 0}\to\bk$  which assigns  to each $t^j$  the scalar $\lambda_j$. Respectively, any element in  $A^\sharp_1$ will be written as $\sum_{j\ge 0}\mu_j\beta_j$ representing the map $\{t^jdt\}_{j\ge 0}\to \bk$ sending $t^jdt$ to $\mu_j$. Observe that, with this notation, the differential $\delta$ in $A^\sharp$ is the only linear and formal extension to the above series for which $\delta(\beta_j)=(j+1)\alpha_{j+1}$ and $\delta(\alpha_j)=0$, for all $j\ge 0$. However, Since $A$ is not of finite type, $A^\sharp$ does not inherits  a coalgebra structure. Indeed, if we set, for each $j\ge 0$,
$$\Delta (\alpha_j)=\sum_{k=0}^{j}\alpha_k\otimes
\alpha_{j-k},\quad \Delta
(\beta_j)=\sum_{k=0}^{j}\beta_k\otimes
\alpha_{j-k}+\sum_{k=0}^{j}\alpha_k\otimes \beta_{j-k},$$
and extend $\Delta$ linearly to any series in $A^\sharp$, the resulting map lands in the completed tensor product, $\Delta\colon A^\sharp\to A^\sharp\widehat\otimes A^\sharp$.

Thus, we proceed as follows: consider the decreasing sequence $B_1\supset B_2\supset\dots\supset B_n\supset\dots$ of subspaces of $A^\sharp$,
$$
B^1=\Delta^{-1}(A^\sharp\otimes A^\sharp),\qquad B^n=\Delta^{-1}(B^{n-1}\otimes B^{n-1}),
$$
\begin{definition}\label{universal2} Define the {\em universal  cocommutative differential graded coalgebra} as $\basum=\cap_{n\ge 1} B_n$. This CDGC contains the vector space generated by $\{\alpha_j,\beta_j\}_{j\ge 0}$. Observe that $\alpha=\sum_{j\ge 0}\alpha_j$ is a counit of $\basum$ as $\Delta\alpha=\alpha \otimes \alpha$. Note also that $\beta=\sum_{j\ge 0}\beta_j\in \basum$ as $\Delta \beta=\alpha\otimes\beta+\beta\otimes\alpha$.
 \end{definition}

\begin{remark}\label{dobledual} Observe that the  map  $\Lambda(t,dt)\hookrightarrow {\mathfrak B}^\sharp$ sending $t^j$ and $t^jdt$ to $\alpha_j^\sharp$ and $\beta_j^\sharp$ respectively, for each $j\ge 0$  is a CDGA morphism.
\end{remark}

On the other hand, consider the  cocommutative $A_\infty$ coalgebra $C$ whose $\infty$-cobar construction  $\cobarinf(C)$ is $\cyl T(u)$. In other words,
$C=\langle y,z,c\rangle$ in which $y={sb}$ and $z={sa}$ are of degree $0$ and $c={-sx}$ is of degree $1$. The operators $\{\Delta_k\}_{k\ge1}$ are immediately deduced using equation $(8)$ and the differential on $\cyl T(u)$ given in Theorem \ref{cilindro}:
 $$
 \begin{aligned}
&\Delta_1c=y-z,\quad \Delta_1y=\Delta_1z=0,\\
&\Delta_2c= -\frac{1}{2}c\otimes (y+z)-\frac{1}{2}(y+z)\otimes c,\quad\Delta_2y=-y\otimes y,\quad \Delta_2z=-z\otimes z,\\
&\Delta_kc=\sum_{p+q=k-1}\frac{B_{k-1}}{p!q!}c^{\otimes p}\otimes(y-z)\otimes c^{\otimes q},\quad \Delta_ky=\Delta_kz=0,\quad k\ge 3.\quad{(9)}
\end{aligned}
$$

Next, define maps of differential vector spaces,
$$
\xymatrix{({\mathfrak B},\delta) \ar@<0.75ex>[r]^-\theta& (C,\Delta_1) \ar@<0.75ex>[l]^-\omega },
$$
as follows:
set $\theta(\alpha_0)=-y$, $\theta(\alpha_1)=y-z$, $\theta(\alpha_j)=0$ for $j\ge 2$,
$\theta(\beta_0 )=c$ and $\theta(\beta_j)=0$ for $j\ge 1$. Then, extend $\theta$ linearly to any series in ${\mathfrak B}$. On the other hand, define $\omega(y)=-\alpha_0$, $\omega(z)=-\sum_{i\ge 0} \alpha_i$ and $\omega(c)=\sum_{i\ge 0}\frac{1}{i+1}\beta_i$. As both ${\mathfrak B}$ and $C$ are acyclic,  a simple inspection shows that $\omega$ and $\theta$ are quasi-isomorphisms and $\theta\omega={\rm id}_C$.

 The main result in this section reads as follows.

 \begin{theorem}\label{principal2} There are quasi-isomorphisms of $A_\infty$ coalgebras,
 $$
\xymatrix{{\mathfrak B} \ar@<0.75ex>[r]^-\Theta& C\ar@<0.75ex>[l]^-\Omega },
$$
such that $\Theta_{(1)}=\theta$ and $\Omega_{(1)}=\omega$. In other words, $\cyl T(u)$ is a quasi-isomorphic retract of the classical cobar construction on ${\mathfrak B}$.
\end{theorem}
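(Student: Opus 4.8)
The plan is to recognize the statement as an instance of the homotopy transfer theorem: transport the genuine (coassociative, cocommutative) coalgebra structure on ${\mathfrak B}$ along a contraction of its underlying complex onto that of $C$, and then verify that the transferred $A_\infty$ coalgebra structure on $C$ is exactly the one recorded in equation $(9)$, equivalently that its $\cobarinf$ is $\cyl T(u)$. First I would upgrade the quasi-isomorphisms $\theta\colon({\mathfrak B},\delta)\to(C,\Delta_1)$ and $\omega\colon(C,\Delta_1)\to({\mathfrak B},\delta)$ to a strong deformation retract by producing an explicit degree $+1$ homotopy $H\colon{\mathfrak B}\to{\mathfrak B}$. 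Because $\delta\beta_j=(j+1)\alpha_{j+1}$, $\delta\alpha_j=0$ and the only surviving homology class is $\langle\alpha_0\rangle$, the homotopy is essentially forced: one sets $H\beta_j=0$ for all $j$, $H\alpha_0=0$, $H\alpha_j=\tfrac1j\beta_{j-1}$ for $j\ge 2$, and $H\alpha_1=-\sum_{i\ge 1}\tfrac1{i+1}\beta_i$, extended formally to series in ${\mathfrak B}$. A direct check then gives $\delta H+H\delta=\id-\omega\theta$ together with the side conditions $H^2=0$, $H\omega=0$ and $\theta H=0$, so that $(\omega,\theta,H)$ is a genuine contraction.

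With this contraction in hand I would argue at the level of cobar constructions. The internal differentials $\delta$ and $\Delta_1$ tensor-extend to a contraction between $(\widehat T(s^{-1}{\mathfrak B}),d_1)$ and $(\widehat T(s^{-1}C),d_1^{C})$, and the coproduct term $d_2$ of $\cobarinf({\mathfrak B})$ is a small perturbation of the differential, locally finite because by Definition \ref{universal2} the coproduct $\Delta$ converges on ${\mathfrak B}$. The basic perturbation lemma then yields a perturbed differential on $\widehat T(s^{-1}C)$, which by equation $(8)$ is the $\cobarinf$ of a transferred $A_\infty$ coalgebra structure $\{\Delta_k^{\mathrm{new}}\}$ on $C$ with $\Delta_1^{\mathrm{new}}=\Delta_1$, together with perturbed contraction maps whose components furnish the asserted $A_\infty$ coalgebra quasi-isomorphisms, with $\Theta_{(1)}=\theta$ and $\Omega_{(1)}=\omega$. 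Equivalently one can simply write $\Theta=\theta+\theta d_2H+\theta d_2Hd_2H+\cdots$ and $\Omega=\omega+Hd_2\omega+Hd_2Hd_2\omega+\cdots$ and verify the $A_\infty$-morphism equations by hand; this is the classical homotopy transfer theorem, cf.\ \cite{getz}.

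The substantive point is then to identify $\{\Delta_k^{\mathrm{new}}\}$ with $(9)$. Reading off the quadratic part of the perturbed differential and using the explicit values of $\theta$, $\omega$ and $\Delta(\alpha_j),\Delta(\beta_j)$ recovers $\Delta_2^{\mathrm{new}}y=-y\otimes y$, $\Delta_2^{\mathrm{new}}z=-z\otimes z$ and $\Delta_2^{\mathrm{new}}c=-\tfrac12 c\otimes(y+z)-\tfrac12(y+z)\otimes c$ after short computations (for $k=2$ there is a single planar binary tree, so $\Delta_2^{\mathrm{new}}=\theta^{\otimes 2}\circ\Delta\circ\omega$ up to sign, and $\theta^{\otimes 2}\Delta\omega(c)=\sum_i\tfrac1{i+1}\theta^{\otimes2}\Delta\beta_i$ collapses precisely to the stated value). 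For $k\ge 3$, $\Delta_k^{\mathrm{new}}$ is the sum over planar binary trees with $k$ leaves, leaves decorated by $\omega$, each internal vertex by $\Delta$, internal edges by $H$, and the root by $\theta$; since $\Delta$ is coassociative and $\theta$ detects only the $\alpha_0$- and $\alpha_1$-coefficients, such a tree can contribute only when exactly one leaf carries the $z$-part $\omega(z)=-\sum_i\alpha_i$ while all remaining leaves carry $\omega(c)=\sum_i\tfrac1{i+1}\beta_i$, producing a term supported on $c^{\otimes p}\otimes(y-z)\otimes c^{\otimes q}$ with $p+q=k-1$.

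It remains to resum the scalars attached to these trees. The weights come from the generating function $\sum_{i\ge 0}\tfrac{s^i}{i+1}$ threaded through the iterated coproduct and the homotopy, with $\theta$ extracting coefficients at the end; the cleanest way to pin the result is to show that the coefficient sequence so obtained satisfies the Bernoulli recursion $(7)$ with the correct initial values, equivalently that it matches the operator identity $h_x=\ad_x/(e^{\ad_x}-\id)$ underlying Theorem \ref{cilindro}. Granting this, the coefficient of $c^{\otimes p}\otimes(y-z)\otimes c^{\otimes q}$ in $\Delta_k^{\mathrm{new}}c$ is exactly $\tfrac{B_{k-1}}{p!q!}$, which is precisely $(9)$; hence $\cobarinf(C,\{\Delta_k^{\mathrm{new}}\})=\cyl T(u)=\cobarinf(C)$ and the transferred maps are the desired $\Theta$ and $\Omega$. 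This last resummation — untangling the tree combinatorics and forcing the Bernoulli numbers to appear — is the main obstacle; everything preceding it is routine perturbation-lemma bookkeeping.
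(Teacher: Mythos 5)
Your strategy is exactly the paper's: build an explicit contraction of $({\mathfrak B},\delta)$ onto $(C,\Delta_1)$ and feed it to the homotopy transfer theorem, then match the transferred diagonals against $(9)$. Your homotopy is correct (it is the negative of the paper's $K$, reflecting the opposite sign convention in $\delta H+H\delta=\id-\omega\theta$), the side conditions do hold, and the $k=2$ computation is fine. The problem is that you stop precisely where the theorem begins. First, you never verify that the transferred $\Delta_k y$ and $\Delta_k z$ vanish for $k\ge 3$; for $z$ this is not automatic — it rests on the telescoping cancellation $H(\alpha_1)+\sum_{j\ge 1}H(\alpha_{j+1})=0$ along each diagonal of $\Delta\bigl(\sum_j\alpha_j\bigr)$, which is the whole reason $H\alpha_1$ is defined as an infinite series rather than a single term. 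Second, and more seriously, the identification of the coefficients of $\Delta_k c$ with $B_{k-1}/(p!q!)$ is exactly the content of the statement, and you defer it with ``granting this.'' A proof that ends by granting the Bernoulli identification has shown only that \emph{some} transferred $A_\infty$ coalgebra structure exists on $C$, not that its $\cobarinf$ is $\cyl T(u)$.

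To close that gap one needs the paper's combinatorial analysis: since $K\beta_j=0$, the only trees contributing to $\Delta_k c$ are the $2^{k-2}$ ``caterpillars'' having a terminal edge at every internal vertex, indexed by the position $n\in\{1,\dots,k-1\}$ of the unique vertex whose two incoming edges are both terminal (there are $\binom{k-2}{n-1}$ of them at position $n$); an induction on $k$ driven by the recursion $(7)$ gives the per-tree value $\Delta_Tc=\varepsilon_T\frac{B_{k-1}}{(k-1)!}\,c^{\otimes n-1}\otimes\bigl((y-z)\otimes c+c\otimes(y-z)\bigr)\otimes c^{\otimes k-n-1}$, and Pascal's identity $\binom{k-2}{m-1}+\binom{k-2}{m}=\binom{k-1}{m}$ then resums these to $\sum_{p+q=k-1}\frac{B_{k-1}}{p!q!}c^{\otimes p}\otimes(y-z)\otimes c^{\otimes q}$. (A minor further slip: in your tree description the decorations of root and leaves are swapped — the root carries $\omega$ and the leaves carry $\theta$ — and the single exceptional output factor is $y-z=\theta(\alpha_1)$, not ``the $z$-part $\omega(z)$''.)
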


Applying the functor $\mathcal{L}$, we obtain the following.

\begin{corollary}\label{lie}   There are quasi-isomorphisms of DGL's
$$
\xymatrix{\mathcal{L}({\mathfrak B}) \ar@<0.75ex>[r]^-{\mathcal{L}(\Theta)}& \lasu\ar@<0.75ex>[l]^-{\mathcal{L}(\Omega)} }.\eqno{\square}
$$
\end{corollary}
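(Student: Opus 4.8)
The plan is to obtain $\Theta$ and $\Omega$ by \emph{homotopy transfer} of the cocommutative coalgebra structure of ${\mathfrak B}$ along the contraction determined by $\theta$ and $\omega$, and then to check by a direct computation that the transferred $A_\infty$-coalgebra structure on $C$ is exactly the one recorded in $(9)$ (equivalently, that its cobar construction is $\cyl T(u)$). First I would upgrade the pair $(\theta,\omega)$, for which $\theta\omega=\mathrm{id}_C$, to a full contraction. Over the field $\bk$ a contracting homotopy always exists, and since ${\mathfrak B}$ is concentrated in (subscript) degrees $0$ and $1$ it can be written down explicitly: a degree $+1$ map $h\colon{\mathfrak B}\to{\mathfrak B}$ with
$$
\mathrm{id}_{\mathfrak B}-\omega\theta=\delta h+h\delta,\qquad h^2=0,\quad h\omega=0,\quad \theta h=0
$$
is given, for instance, by $h\alpha_0=0$, $h\alpha_1=-\sum_{i\ge2}\tfrac{1}{i}\beta_{i-1}$, $h\alpha_j=\tfrac{1}{j}\beta_{j-1}$ for $j\ge2$ and $h\beta_j=0$ for all $j$, extended formally to series; a routine verification gives the four identities.

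Next I would transfer along the classical cobar construction. Write $\cobarinf({\mathfrak B})=(\widehat T(s^{-1}{\mathfrak B}),d_1+d_2)$, with $d_1$ the derivation induced by $\delta$ and $d_2$ the one induced by the coproduct $\Delta$. Functoriality of $\widehat T$ together with the usual ``tensor homotopy'' built out of $h$, $\omega\theta$ and identities turns $(\theta,\omega,h)$ into a contraction of $(\widehat T(s^{-1}{\mathfrak B}),d_1)$ onto $(\widehat T(s^{-1}C),d_1)$ whose section $\widehat T(s^{-1}\omega)$ and retraction $\widehat T(s^{-1}\theta)$ are algebra morphisms with composite the identity. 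Perturbing $d_1$ by $d_2$ --- all the series involved converging in the complete tensor algebra, because $d_2$ strictly raises tensor length --- the homotopy transfer theorem produces an $A_\infty$-coalgebra structure $\{\Delta_k'\}_{k\ge1}$ on $C$ with $\Delta_1'=\Delta_1$, i.e. a derivation $D$ on $\widehat T(s^{-1}C)$ with $(\widehat T(s^{-1}C),D)=\cobarinf(C')$; together with $A_\infty$-coalgebra morphisms $\Omega\colon C\to{\mathfrak B}$ and $\Theta\colon{\mathfrak B}\to C$ whose cobar constructions are the perturbed section and retraction, so that $\Omega_{(1)}=\omega$ and $\Theta_{(1)}=\theta$; the side conditions keep the perturbed data a full contraction, whence $\Theta\circ\Omega=\mathrm{id}_C$; and $\Theta$, $\Omega$ are quasi-isomorphisms because $\theta$, $\omega$ are. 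This reduces the theorem to the identity $\{\Delta_k'\}=\{\Delta_k\}$.

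To prove that identity I would use the tree description of the transferred operations: $\Delta_k'\colon C\to C^{\otimes k}$ is the symmetrized sum, over rooted binary trees with $k$ leaves, of the composites obtained by decorating the root edge with $\omega$, each internal vertex with the coproduct $\Delta$ of ${\mathfrak B}$, each internal edge with $h$, and each leaf with $\theta$. Since $\theta$ annihilates $\alpha_j$ for $j\ge2$ and $\beta_j$ for $j\ge1$, and $h$ has the simple form above, only a very restricted family of trees contributes; organising the surviving contributions and invoking the recursion $(7)$ for the Bernoulli numbers --- equivalently, the generating series of the operator $h_x=\ad_x/(e^{\ad_x}-\mathrm{id})$ occurring in the definition of $\lasu$ --- one checks that $\Delta_k'$ agrees with the $\Delta_k$ of $(9)$ for every $k$. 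In practice, since by $(8)$ the $A_\infty$-coalgebra with operations $\{\Delta_k\}$ has cobar construction exactly $\cyl T(u)$ of Theorem \ref{cilindro}, it is enough to compare the derivation $D$ with the differential of $\cyl T(u)$ on the generators $s^{-1}C$. I expect this last step to be the main obstacle: carrying signs correctly through the suspension dictionary $(8)$ and through the non-symmetrized tree formula, and recognizing the Bernoulli pattern among the surviving terms, is where the bulk of the computation lies and where the precise shape of $h$, $\theta$ and $\omega$ is indispensable. Corollary \ref{lie} is then immediate upon applying the generalized Quillen functor $\mathcal{L}$ and using $\widehat U\mathcal{L}=\cobarinf$ and $\mathcal{L}(C)=\lasu$.
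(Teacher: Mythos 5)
Your proposal follows the paper's argument for Theorem \ref{principal2} essentially verbatim: the contracting homotopy you write down is (up to the sign convention $h=-K$) exactly the map $K$ used there, the transferred diagonals are computed by the same planar-tree formula with the same observation that $\theta$ and the homotopy kill most generators, the identification with $(9)$ rests on the same Bernoulli recursion $(7)$, and the corollary is then obtained, as in the paper, by applying $\mathcal{L}$ and using $\widehat U\mathcal{L}=\cobarinf$ together with $\cobarinf(C)=\cyl T(u)=\widehat U\lasu$. The one step you leave as an expectation --- the inductive verification that the surviving trees assemble into the Bernoulli coefficients of $(9)$ --- is precisely the step the paper itself only sketches, so the two arguments coincide in both substance and level of detail.
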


The proof of Theorem \ref{principal2}, as expected, consists in a careful application of   the classical {\em Perturbation Lemma}  (see for instance \cite{huebschka} or \cite[\S3]{gulambs}), known nowadays by the {\em Homotopy Transfer Theorem} \cite{kontsoi,lova}, in the context of $A_\infty$ structures, and that we now recall.

\begin{theorem}\label{perturbacion}
Let
$$
\xymatrix{ \ar@(ul,dl)@<-5.5ex>[]_K  & (M,d) \ar@<0.75ex>[r]^-\theta & (N,d) \ar@<0.75ex>[l]^-\omega }
$$
be a diagram of complexes in which $(M,d)$ is a DGC with diagonal $\Delta$. Assume that $\theta\omega={\rm id}_N$, $\theta K=K\omega=K^2=0$ and $K$ is a chain homotopy between ${\rm id}_M$ and $\omega\theta$, i.e., $Kd+dK=\omega\theta-{\rm id}_M$. Then, there is an explicit $A_\infty$ coalgebra structure on $N$ and morphisms of $A_\infty$ algebras,
$$
\xymatrix{M \ar@<0.75ex>[r]^-\Theta& N\ar@<0.75ex>[l]^-\Omega },
$$
such that $\Theta_{(1)}=\theta$ and $\Omega_{(1)}=\omega$. \end{theorem}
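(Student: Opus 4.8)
The plan is to deduce the statement from the classical \emph{Basic Perturbation Lemma} (BPL), applied to the cobar construction by means of the \emph{tensor trick}. Recall from the discussion preceding the theorem that an $A_\infty$ coalgebra structure on $N$ is exactly a degree $-1$, square-zero derivation of the complete tensor algebra $\widehat T(s^{-1}N)$ annihilating $1$, and that an $A_\infty$ morphism is exactly a morphism of the associated complete DGA's. So it suffices to manufacture such a derivation on $\widehat T(s^{-1}N)$ together with DGA maps to and from $\cobarinf(M)=(\widehat T(s^{-1}M),d)$ whose lowest components are $\theta$ and $\omega$.

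First I would lift the given contraction to the tensor algebras. Through the desuspension conventions of $(8)$, the data $\theta,\omega,K$ induce maps between the complexes $s^{-1}M$ and $s^{-1}N$ satisfying the same side conditions, and the tensor trick extends them to a contraction
$$
\xymatrix{ \ar@(ul,dl)@<-5.5ex>[]_{K_T} & (\widehat T(s^{-1}M),D) \ar@<0.75ex>[r]^-{\widehat\theta} & (\widehat T(s^{-1}N),D') \ar@<0.75ex>[l]^-{\widehat\omega} }
$$
in which $D,D'$ are the derivations extending the internal differentials, $\widehat\theta,\widehat\omega$ are the componentwise extensions, and $K_T$ is the standard tensor-trick homotopy (built from $K$ acting on a single tensor slot and from the projector $\widehat\omega\widehat\theta$ on the slots to one side of it). A direct check gives $\widehat\theta\widehat\omega={\rm id}$, $\widehat\theta K_T=K_T\widehat\omega=K_T^2=0$ and $K_TD+DK_T=\widehat\omega\widehat\theta-{\rm id}$.

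Next I would exhibit the cobar differential as a perturbation of $D$. Since $M$ is a genuine DGC, $d=D+t$ (a differential, because the cobar construction of a DGC is one), where $t$ is the derivation extending the operator of $(8)$ attached to the coproduct $\Delta$, and $t$ raises tensor length by exactly one. Hence $K_Tt$ raises length, so it is topologically nilpotent for the length filtration of $\widehat T$; as that filtration is complete, the BPL applies \emph{without} any finiteness or boundedness assumption on $M$ or $N$. It produces a perturbed differential $D'+t'$ on $\widehat T(s^{-1}N)$, with $t'$ the usual convergent series $\widehat\theta\,t\,({\rm id}-K_Tt)^{-1}\,\widehat\omega$, together with the perturbed projection $\Theta$ and inclusion $\Omega$ (again convergent series in $t$, $K_T$, $\widehat\theta$, $\widehat\omega$) realizing a new contraction onto $(\widehat T(s^{-1}N),D'+t')$.

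The step I expect to be the real obstacle is verifying that this output is \emph{multiplicative}: that $D'+t'$ is a derivation of $\widehat T(s^{-1}N)$ — so that it defines an $A_\infty$ coalgebra on $N$ with $\Delta_1=d$ — and that $\Theta,\Omega$ are algebra morphisms, hence $A_\infty$ morphisms. Both facts flow from the precise shape of $K_T$: because it touches only one tensor factor at a time, the BPL recursion makes every term of $t'$, $\Theta$ and $\Omega$ split compatibly with concatenation, and unwinding the recursion rewrites $\Delta_k^N$, $\Theta_{(k)}$, $\Omega_{(k)}$ as the familiar signed sums over planar rooted trees with $k$ leaves (internal vertices labelled by $\Delta$, internal edges by $K$, leaves by $\theta$, and, on the $\Omega$-side, the root edge by $\omega$). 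Since $t'$ raises length by at least one, the one-vertex trees give $\Delta_1^N=d$, $\Theta_{(1)}=\theta$ and $\Omega_{(1)}=\omega$, as required. The sign bookkeeping in this identification, together with the derivation/morphism check, are the only delicate points; everything else is formal manipulation of the contraction. (When $M$ and $N$ are of finite type one may instead dualize everything to DGA's and quote the $A_\infty$-algebra transfer theorem of \cite{huebschka,gulambs,kontsoi,lova} verbatim, but the argument above avoids that restriction, which is what matters for the application to $\basum$.)
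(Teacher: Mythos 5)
The paper does not actually prove this statement: it is recalled as the classical Perturbation Lemma / Homotopy Transfer Theorem with references to \cite{huebschka}, \cite{gulambs}, \cite{kontsoi}, \cite{lova}, followed only by the explicit tree formula for the transferred $\Delta_k$. Your sketch is precisely the standard tensor-trick plus Basic Perturbation Lemma argument found in those sources, so it is the "same approach" in the only available sense, and it is correct in outline: completeness of the word-length filtration on $\widehat T(s^{-1}M)$ makes the perturbation series converge with no finiteness hypotheses, you correctly isolate multiplicativity of the output as the one genuinely delicate point (this is exactly what the one-slot form of the tensor-trick homotopy buys), and unwinding the recursion gives the planar-tree formula the paper records after the statement.
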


We also briefly recall, for each $k\ge 2$,  the explicit $k$-diagonal $\Delta_k\colon N\to N^{\otimes k}$ obtained in theorem above.
Let $PT_k$ be the set of isomorphism classes of
planar rooted trees with internal vertices of valence two and
exactly $k$ leaves. For each  tree $T\in PT_k$, we define
a linear map $\Delta_T\colon N\to N^{\otimes k}$ as follows: label the leaves, internal edges, internal vertices and the root of the tree $T$ by $\theta$, $K$, $\Delta$ and $\omega$ respectively. Then, $\Delta_T$ is defined as the composition of the different labels moving
up from the root to the leaves, or terminal edges, of $T$.
Then, $\Delta_k$ is defined as
$$
\Delta_k=\sum_{T\in PT_k}\varepsilon_T\Delta_T
$$
where
$\varepsilon_T$ is  determined by the parity of the
number of pairs $(\ell ,v)$ where $\ell $ is a left terminal edge in the vertex $v$ of $T$, and $v$ has an even
number of incoming edges \cite[\S3]{ber2}.

\begin{proof}[Proof of Theorem \ref{principal2}]
Define a degree $-1$ map $K\colon {\mathfrak B}\to {\mathfrak B}$ as follows: set $K(\alpha_0)=0$, $K(\alpha_1)=\sum_{i\ge 1}\frac{1}{i+1}\beta_i$ and $K(\alpha_j)={-}\frac{1}{j}\beta_{j-1}$ for $j\ge 2$. Then, extend $K$ linearly to any series in ${\mathfrak B}_0$ and define it to be zero on ${\mathfrak B}_1$.  A simple inspection shows that the maps $K,\theta,\omega$ satisfy the assumptions on the theorem above so it remains to show that, for each $k\ge 2$, the transferred $k^{\text{th}}$ diagonals on $C$ coincide with the ones in equation $(9)$.

First, for  $k=2$, this is a short computation:
\begin{align*}
\Delta_2 y=&(\theta \otimes \theta )\circ \Delta \circ \omega (y)=-(\theta \otimes \theta )\circ \Delta (\alpha_0)=-y\otimes y.\\
\Delta_2 z=&(\theta \otimes \theta )\circ \Delta \circ \omega (z)=-(\theta \otimes \theta )\circ \Delta (\sum_{i\geq 0}\alpha_i)\\
\end{align*}
\begin{align*}
=&-\Bigl(\theta (\alpha_0 )\otimes \theta (\alpha_0)+\theta (\alpha_0)\otimes \theta (\alpha_1)+\theta (\alpha_1)\otimes \theta (\alpha_0)+\theta (\alpha_1)\otimes \theta (\alpha_1)\Bigr)\\
=&-z\otimes z.\\
\Delta_2 c=&(\theta \otimes \theta )\circ \Delta \circ \omega (c)=(\theta \otimes \theta )\circ \Delta (\sum_{i\geq 0}\frac{1}{i+1}\beta_i)\\
=&{\textstyle(\theta \otimes \theta )\Bigl((\beta_0 \otimes \alpha_0+ \alpha_0\otimes \beta_0)+\frac{1}{2}(\beta_0\otimes \alpha_1+\beta_1\otimes \alpha_0+ \alpha_0\otimes \beta_1+\alpha_1\otimes \beta_0)\Bigr)}\\
=&-\frac{1}{2}c\otimes (y+z)-\frac{1}{2}(y+z)\otimes c
\end{align*}

Next, we show that, for any $k\ge 3$, $\Delta_kz=0$. To do so, we check that $\Delta_Tz=0$ for  any tree $T\in PT_k$.
Since $k\geq 3$, the linear map $\Delta_T$ is of the  form
$$
\Delta_T=\cdots \circ (P\otimes K)\circ \Delta_2\circ \omega,
$$
 where
$P$ is either  $\theta$ or $K$, depending on whether the tree has one or two
internal edges in the first vertex. Hence,
$$
\Delta_Tz=\cdots \circ (P\otimes K)\circ
\Delta_2(-\sum_{j\geq 0}\alpha_j).
$$
Now,
$\Delta_2(\sum_{j\geq 0}\alpha_j)$ is the sum of the following terms:
\begin{equation*}
\xymatrixcolsep{.8pc} \xymatrixrowsep{.8pc}
\entrymodifiers={=<1pc>}
\xymatrix{ &&&&&\alpha_0\otimes \alpha_0&\ar@{..}[dddddlllll]&&&\\&&&&\alpha_1 \otimes \alpha_0&\ar@{..}[ddddrrrr]&\alpha_0\otimes \alpha_1&&&\\
&&&\alpha_2\otimes \alpha_0&\ar@{..}[dddrrr]&\alpha_1 \otimes \alpha_1 &&\alpha_0\otimes \alpha_2&&\\
&&\alpha_3\otimes \alpha_0&\ar@{..}[ddrr]&\alpha_2\otimes \alpha_1&&\alpha_1\otimes
\alpha_2&&\alpha_0\otimes \alpha_3 &\\
&\alpha_4\otimes \alpha_0&&\alpha_3\otimes \alpha_1&&\alpha_2\otimes
\alpha_2&&\alpha_1\otimes\alpha_3&&\alpha_0\otimes \alpha_4\\
&&&&&&&&&
 }
\end{equation*}
Observe  that $P\otimes K$ applied to the left diagonal vanishes since $K(\alpha_0)=0$. The evaluation of $P\otimes K$ on the $i^{th}$ of the  remaining diagonals between dotted lines gives,
\begin{eqnarray*}
\sum_{j\geq 1}P(\alpha_i)\otimes K(\alpha_j)&=&P(\alpha_i)\otimes
K(\alpha_1)+\sum_{j\geq 1}P(\alpha_i)\otimes K(\alpha_{j+1})\\
&=&P(\alpha_i)\otimes \sum_{j\geq 1}\frac{1}{j+1}\beta_j
{-}P(\alpha_i)\otimes \sum_{j\geq 1}\frac{1}{j+1}\beta_j
=0,
\end{eqnarray*}
and therefore $\Delta_Tz=0$.

 Next, observe that
 $$
 (P\otimes K)\circ \Delta_2\circ \omega (y)=-(P\otimes K)(\alpha_0\otimes \alpha_0)=0
  $$
  as $K(\alpha_0)=0$. Thus, $\Delta_Ty=0$ for any tree $T\in PT_k$ and thus, $\Delta_ky=0$ for any $k\ge 3$.

It remains to prove that,
$$\Delta_kc= \sum_{p+q=k-1}\frac{B_{k-1}}{p!q!}c^{\otimes
p}\otimes (y-z)\otimes c^{\otimes q},\ k\geq 3.$$

In our particular situation, since $\omega (c)=\sum_{i\geq
0}\frac{1}{i+1}\beta_i$, $\Delta\,
\beta_j=\sum_{k=0}^j \beta_k\otimes \alpha_{j-k}+\sum_{k=0}^j
\alpha_k\otimes \beta_{j-k}$ and $K(\beta_j)=0$ for $j\geq 0$, the only trees $T$ which contribute to
$\Delta_k \,c$ are those with a terminal edge in each vertex. Otherwise, as two internal edges  with the same vertex $\Delta$ are labeled with
$K$,  the linear map $\Delta_T$ vanishes. Call $\mathcal T\subset PT_k$ the set of these contributing trees which is easily seen to have cardinality $2^{k-2}$. Observe that  each $T\in\mathcal T$ has exactly one vertex in which the two incoming edges are terminal. Moreover, among all the trees in $\mathcal T$, there are $k-1$ different positions in which this special vertex with adjacent terminal edges can be found. In fact, for each $n=1,\dots,k-1$, there are $\binom{k-2}{n-1}$ trees in $\mathcal T$ for which this special vertex is located in the $n^{th}$ position.

For instance, for $k=4$, $\Delta_4c=\sum_{T\in\mathcal T}\pm\Delta_T(c)$ where $\mathcal T$ is the set of the following trees:\vskip .2cm
 {
 \tiny
 % \begin{equation} \label{arboles}
 $$\xymatrixcolsep{.3pc}
\xymatrixrowsep{.3pc} \entrymodifiers={=<.5pc>} \xymatrix{
           &*{^\theta}\ar@{-}[dr]&                      & *{^\theta}\ar@{-}[dl]      &                    &*{^\theta}\ar@{-}[ddll]&&*{^\theta}\ar@{-}[dddlll]&   &{^\theta}\ar@{-}[ddrr]&&{^\theta}\ar@{-}[dr]    &                   &{^\theta}\ar@{-}[dl]&&{^\theta}\ar@{-}[dddlll]&   &{^\theta}\ar@{-}[dddrrr]&&{^\theta}\ar@{-}[dr]&&{^\theta}\ar@{-}[dl]   &                   &{^\theta}\ar@{-}[ddll]  &   &*{^\theta}\ar@{-}[dddrrr]&&*{^\theta}\ar@{-}[ddrr]  &                   &*{^\theta}\ar@{-}[dr]     &                      &*{^\theta}\ar@{-}[dl]  \\
           &                & \Delta\ar@{-}[dr]_K&                       &                    &                  &&                    &   &                 &&                   &\Delta \ar@{-}[dl]^K&               &&                   &   &                   &&               &\Delta\ar@{-}[dr]^K&                   &                   &                 &   &                    &&                    &                   &                     &\Delta\ar@{-}[dl]^K &                 \\
           &                &                      & \Delta\ar@{-}[dr]_K &                    &                  &&                    &   &                 &&\Delta \ar@{-}[dr]_K&                   &               &&                   &   &                   &&               &                   &\Delta\ar@{-}[dl]_K&&                 &   &                    &&                    &                   &\Delta\ar@{-}[ld]^K&                      &                  \\
           &                &                      &                       &\Delta\ar@{-}[d]  &                  &&                    &   &                 &&                   &\Delta \ar@{-}[d] &               &&                   &   &                   &&               &\Delta\ar@{-}[d] &&                   &                 &   &                    &&                    &\Delta\ar@{-}[d] &                     &                      &                  \\
           &                &                      &                       & {_{\stackrel{}{\omega}}}&                  &&                    &   &                 &&                   &{_{\stackrel{}{\omega}}}&               &&                   &   &                   &&               &{_{\stackrel{}{\omega}}}&&                   &                 &   &                    &&                    &{_{\stackrel{}{\omega}}}&                     &                      &
}
%\end{equation}
$$}

\noindent

Here, there are $3=k-1$ different positions for the special vertex. The first position ($n=1$) appears $1=\binom{2}{0}$ time in the left tree (the upper left $\Delta$).  The second position ($n=2$) appears $2=\binom{2}{1}$ times in the middle two trees (the upper middle $\Delta$). Finally, the third position ($n=3$) appears $1=\binom{2}{2}$ time in the right tree (the upper right $\Delta$).

Next,  using the particular recursive definition of the Bernoulli numbers in $(7)$, a long inductive procedure on $k\ge 3$ shows that, for each  tree $T\in\mathcal T$, with the special vertex  in the $n^{th}$ position, we have
$$\Delta_Tc=\varepsilon_T\frac{B_{k-1}}{(k-1)!}c^{n-1}\otimes \bigl((y-z)\otimes c\otimes + c\otimes(y-z)\bigr)\otimes c^{\otimes k-n-1}.$$

Now,
observe that, for a given $m=1,\dots,k-2$,  the term
 $$
 c^{m}\otimes (y-z)\otimes c^{k-m-1}
 $$
 appears (modulo scalars) in the expression of $\Delta_kc$ for those trees with the special vertex in positions $m$ and $m+1$. For $m=0$ and $m=k-1$,  the expressions $(y-z)\otimes c^{k-1}$ and $c^{k-1}\otimes (y-z)$ appear for those trees with the special vertex in positions $1$ and $k-1$ respectively. In view of this observation we have,
$$
\begin{aligned}\Delta_kc&=\sum_{T\in\mathcal T}\varepsilon_T\Delta_Tc\\
&=\frac{B_{k-1}}{(k-1)!}(y-z)\otimes c^{\otimes k-1}\\
&\quad \sum_{m=1}^{k-2} \frac{B_{k-1}}{(k-1)!}\left(\binom{k-2}{m-1}+\binom{k-2}{m}\right)c^{\otimes m}\otimes (y-z)\otimes c^{\otimes
k-m-1}\\
&\quad\frac{B_{k-1}}{(k-1)!}c^{\otimes k-1}\otimes(y-z)
\\
&=\sum_{m=0}^{k-1}\frac{B_{k-1}}{(k-1)!}\binom{k-1}{m}c^{\otimes
m}\otimes (y-z)\otimes c^{\otimes k-m-1}\\
&=\sum_{p+q=k-1}\frac{B_{k-1}}{p!q!}c^{\otimes
p}\otimes (y-z)\otimes c^{\otimes q}.
\end{aligned}$$

\end{proof}

\section{Homotopy in $\catlinfinito$ via the Lawrence-Sullivan cylinder}

The Lawrence-Sullivan construction $\lasu$ has already proven to be the right cylinder of $S^0$ in the category $\catdgl$ of differential graded Lie algebras \cite[Remark 3.5(2)]{bumu}. We will use it in a similar fashion to introduce  homotopy  in $\catlinfinito$. As one can see in \cite{bumu}, this naturally generalizes both the classical Quillen notion and the homotopy via the gauge action on $\catdgl$.

\begin{definition}
Let $z_0,z_1\in\mc(L)$ with  $L\in\catlinfinito$. We say that $z_0$ is {\em homotopic} to $z_1$ and write $z_0\sim z_1$ if there is a mild $L_\infty$ morphism $\phi\colon \lasu\to L$ such that $\mc(\phi)(a)=z_0$ and $\mc(\phi)(b)=z_1$.
\end{definition}

Given $z\in \mc(L)$ we denote by $\varphi_z\colon \cinfinito(L)\to\bk$ the corresponding augmentation  given by the bijection $\mc(L)\cong \aug\cinfinito(L)$ of Proposition \ref{inicial3}. Explicitly, $\varphi_z=\rho\cinfinito(\phi_u)$ where $\phi_u\colon \lib(u)\to L$ is the  $L_\infty$ morphism  of Lemma \ref{existenciaMC} associated to $z$ and $\rho$ as in Lemma \ref{existenciaaum}. Also, recall that two CDGA morphisms defined over a CDGA, free as commutative graded algebra, $f_0,f_1\colon (\Lambda V,d)\to A$ are said to be homotopic, and write $f_0\sim f_1$ if there is a morphism $\psi\colon  (\Lambda V,d)\to A\otimes\Lambda (t,dt)$ such that $\varepsilon_i\psi=f_i$, $i=0,1$. Here $\varepsilon_i\colon A\otimes\Lambda (t,dt)\to A$ denotes the morphisms extending the identity on $A$ and evaluating $t$ on  $i$.

Our main result in this section reads as follows.

\begin{theorem}\label{homotopia}
Let $z_0,z_1\in\mc(L)$ with  $L\in\catlinfinito$. Then, $z_0\sim z_1$ if and only if $\varphi_{z_0}\sim \varphi_{z_1}$.
\end{theorem}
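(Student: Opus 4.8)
The plan is to pass through the cochain functor and the identification of Maurer--Cartan elements with (based) augmentations. The key structural observations are: (i) $\lasu$ is the cylinder object, so $\cinfinito(\lasu)$ should be (a model of) $\cinfinito(L)$-valued path objects, i.e. of the form $\Lambda(x,y)\otimes\Lambda(t,dt)$ up to the appropriate CDGA homotopy; (ii) by Corollary \ref{lie} we know $\lasu\simeq \mathcal L(\basum)$, and by Remark \ref{dobledual} the CDGC $\basum$ is built so that its ``dual'' is $\Lambda(t,dt)$. So the first step is to compute, or at least identify up to the relevant equivalence, $\cinfinito(\lasu)$: I would show that it is quasi-isomorphic — in fact, for the homotopy-lifting purposes at hand, CDGA-homotopy equivalent rel the two endpoint inclusions — to $\cinfinito(\lib(u),\partial)\otimes\Lambda(t,dt)=(\Lambda(x,y),d)\otimes\Lambda(t,dt)$, with the two evaluations $\varepsilon_0,\varepsilon_1$ corresponding to the two Maurer--Cartan elements $a,b\in\lasu$.

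Granting that, the argument is a diagram chase. For the forward direction, suppose $z_0\sim z_1$ via a mild $L_\infty$ morphism $\phi\colon\lasu\to L$ with $\mc(\phi)(a)=z_0$, $\mc(\phi)(b)=z_1$. Apply $\cinfinito$ to get a CDGA morphism $\cinfinito(\phi)\colon\cinfinito(L)\to\cinfinito(\lasu)$. Composing with the identification of $\cinfinito(\lasu)$ with $(\Lambda(x,y),d)\otimes\Lambda(t,dt)$ and then with $\rho\otimes\id$ (where $\rho\colon(\Lambda(x,y),d)\to\bk$ is as in Lemma \ref{existenciaaum}), one obtains a morphism $\psi\colon\cinfinito(L)\to\bk\otimes\Lambda(t,dt)=\Lambda(t,dt)$, i.e. a CDGA homotopy. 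One must then check that $\varepsilon_i\psi=\varphi_{z_i}$ for $i=0,1$; this is exactly the content of the functoriality statement in Proposition \ref{inicial3} applied to the endpoint inclusions $(\lib(u),\partial)\hookrightarrow\lasu$ sending $u$ to $a$ and to $b$ respectively, together with $\mc(\phi)(a)=z_i$ and the explicit description $\varphi_z=\rho\cinfinito(\phi_u)$. So the forward direction reduces to naturality of the bijection $\mc(-)\cong\aug\cinfinito(-)$ plus the identification of $\cinfinito(\lasu)$.

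For the converse, start with a CDGA homotopy $\psi\colon\cinfinito(L)=(\Lambda V,d)\to\Lambda(t,dt)$ with $\varepsilon_i\psi=\varphi_{z_i}$. Using Lemma \ref{existenciaaum} (a based-lifting argument, now in the relative setting over $\Lambda(t,dt)$ rather than over $\bk$), lift $\psi$ to a ``based'' morphism $\widetilde\psi\colon(\Lambda V,d)\to(\Lambda(x,y),d)\otimes\Lambda(t,dt)\cong\cinfinito(\lasu)$ whose restriction to $V^0$ is linear in $x$; one checks the lift exists and is unique exactly as in the proof of that lemma, the only new feature being the $\Lambda(t,dt)$-coefficients carried along. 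Since $L$ is mild and of finite type, $\widetilde\psi=\cinfinito(\phi)$ for a (necessarily mild, by finite type) $L_\infty$ morphism $\phi\colon\lasu\to L$, and tracking the two evaluations through the bijection of Proposition \ref{inicial3} gives $\mc(\phi)(a)=z_0$ and $\mc(\phi)(b)=z_1$, i.e. $z_0\sim z_1$.

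The main obstacle I anticipate is Step (i): pinning down $\cinfinito(\lasu)$ precisely enough to make the based-lifting argument go through. Because $\lasu=(\widehat\lib(a,b,x),\partial)$ is not of finite type and its differential involves the full Bernoulli series, computing $\cinfinito(\lasu)$ directly from Definition \ref{cocadena} is delicate; one really wants to invoke Theorem \ref{principal2}/Corollary \ref{lie} and Remark \ref{dobledual} to recognize $\cinfinito(\lasu)$ as (homotopy equivalent to) $\Lambda(t,dt)$ with two extra degree-zero/degree-$(-1)$ generators recording the endpoints — but making "homotopy equivalent, compatibly with both endpoint evaluations and with the based-lifting of Lemma \ref{existenciaaum}" into a usable statement is the crux. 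Everything after that is bookkeeping with signs and the Koszul conventions of equations $(2)$ and $(3)$.
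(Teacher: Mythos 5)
Your overall strategy is the same as the paper's: translate everything through Proposition \ref{inicial3} into CDGA homotopies of augmentations, and use the transferred structure of Section 3 (Theorem \ref{principal2}, Corollary \ref{lie}, Remark \ref{dobledual}) to connect $\cinfinito(\lasu)$ with $\Lambda(t,dt)$. But the step you yourself flag as the crux --- identifying $\cinfinito(\lasu)$ with $(\Lambda(x,y),d)\otimes\Lambda(t,dt)$ up to a CDGA homotopy equivalence compatible with both endpoint evaluations and with based lifting --- is exactly the content that has to be proved, and it is left unproved. That identification is also stronger than what is needed, and harder to get: $\cinfinito(\lasu)$ is a free CDGA on the dual of $s\widehat\lib(a,b,x)$, hence on infinitely many generators, it is not obviously cofibrant (see the discussion at the end of Section 4), and an abstract quasi-isomorphism to a small path object does not by itself give the strict endpoint compatibility your diagram chase requires. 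Likewise, the ``relative version of Lemma \ref{existenciaaum} over $\Lambda(t,dt)$'' in your converse direction is asserted, not proved; the factorization $P(x)=x(x-1)r(x)$ used there does not automatically carry over when the coefficients live in $\Lambda(t,dt)$ and, in any case, you would still need to know that the resulting lift lands in something recognizable as $\cinfinito(\phi)$ for an $L_\infty$ morphism $\phi\colon\lasu\to L$.

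The paper fills this gap with something weaker but explicit (Lemma \ref{homotopiaintervalo}): it builds a concrete CDGA morphism $\Gamma\colon\cinfinito(\lasu)\to\Lambda(t,dt)$ by composing $\cinfinito\mathcal{L}(\Theta)$ with the projection onto ${\mathfrak B}^\sharp$, observing that the result kills all decomposables and factors through the inclusion $\Lambda(t,dt)\hookrightarrow{\mathfrak B}^\sharp$ of Remark \ref{dobledual}, so that $\Gamma(sa^\sharp)=t-1$, $\Gamma(sb^\sharp)=-t$, $\Gamma(sx^\sharp)=dt$ and $\varepsilon_i\Gamma=\varphi_a,\varphi_b$; and it builds an explicit section $\sigma$ of $\Gamma$ out of $\cinfinito\mathcal{L}(\Omega)$. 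With $\Gamma$ and $\sigma$ in hand, both directions of the theorem become one-line compositions ($\Gamma\cinfinito(\phi)$ for the forward direction, $\sigma\psi$ for the converse), and no lifting lemma or homotopy-equivalence-rel-endpoints is ever needed. If you replace your Step (i) by the construction of this retraction-with-section, your argument goes through; as written, the proof is incomplete at its central point.
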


As one may expect, Theorem \ref{homotopia} reduces to the case of considering the two endpoints of the interval.

\begin{lemma}\label{homotopiaintervalo}
Let $\varphi_a,\varphi_b\colon \cinfinito (\lasu)\to\bk$ be the  augmentations corresponding to the Maurer-Cartan elements $a,b\in \mc(\lasu)$. Then,
$\varphi_a\sim \varphi_b$.
\end{lemma}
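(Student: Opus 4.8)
The plan is to exhibit the homotopy concretely as the ``universal flow'' built into the Lawrence--Sullivan construction. Unravelling the definitions, what we must produce is a CDGA morphism $\psi\colon\cinfinito(\lasu)\to\Lambda(t,dt)$ whose two evaluations $\varepsilon_0\psi$ and $\varepsilon_1\psi$ are $\varphi_a$ and $\varphi_b$ respectively. By Corollary \ref{funtormc} (together with Remark \ref{finalre}, since $\lasu$, and hence $\lasu\widehat\otimes\Lambda(t,dt)$, is not of finite type) such a $\psi$ corresponds to a Maurer--Cartan element $Z\in\mc(\lasu\widehat\otimes\Lambda(t,dt))$; moreover, because the identification $\mc\cong\aug$ of Proposition \ref{inicial3} is natural, the morphism $\varepsilon_i\colon\Lambda(t,dt)\to\bk$ corresponds on the Maurer--Cartan side to evaluation at $t=i$, so that $\varepsilon_i\psi=\varphi_{Z|_{t=i}}$. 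Thus it suffices to find $Z\in\mc(\lasu\widehat\otimes\Lambda(t,dt))$ with $Z|_{t=0}=a$ and $Z|_{t=1}=b$.

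Such a $Z$ is precisely the gauge transform of the Maurer--Cartan element $a$ along the flow $-tx$: set $g=-tx\in(\lasu\widehat\otimes\Lambda(t,dt))_0$ and
$$Z \;=\; g\cdot a \;=\; e^{\ad_g}(a)-\frac{e^{\ad_g}-\id}{\ad_g}(\partial g),$$
which is again a Maurer--Cartan element by the standard behaviour of the gauge action on a complete DGL (all series converge because $\lasu$ is complete). Evaluating at $t=0$ also sets $dt=0$, giving $Z|_{t=0}=a$ at once. Evaluating at $t=1$ gives $\partial g|_{t=1}=-\partial x$, and a short computation using the defining equation $\partial x=\ad_x(b)+h_x(b-a)$ with $h_x=\ad_x/(e^{\ad_x}-\id)$ — together with the telescoping identity $\tfrac{e^{-\ad_x}-\id}{e^{\ad_x}-\id}=-e^{-\ad_x}$ — yields $\tfrac{e^{-\ad_x}-\id}{\ad_x}(\partial x)=e^{-\ad_x}(a)-b$, and hence $Z|_{t=1}=e^{-\ad_x}(a)-\bigl(e^{-\ad_x}(a)-b\bigr)=b$. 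This is of course the Lawrence--Sullivan picture of $x$ as a ``path'' from $a$ to $b$ (cf. \cite{lasu,bumu}); equivalently it can be checked directly from the recursion $(7)$ for the Bernoulli numbers and Theorem \ref{cilindro}.

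Finally, one transports $Z$ back: the component-wise recipe in the proof of Corollary \ref{funtormc} assigns to each generator of $\cinfinito(\lasu)=(\Lambda V,d)$ the corresponding coefficient of $Z$, defining an algebra map $\psi\colon\cinfinito(\lasu)\to\Lambda(t,dt)$. Well-definedness is not an issue here, because $d$ is a \emph{finite} sum on every generator of $V$ (the free Lie algebra on $a,b,x$ being finite dimensional in each word length), so no convergence problem arises when checking $d\psi(v)=\psi(dv)$, and this identity holds exactly because $Z$ solves the Maurer--Cartan equation. By the naturality noted above, $\varepsilon_0\psi=\varphi_a$ and $\varepsilon_1\psi=\varphi_b$, whence $\varphi_a\sim\varphi_b$. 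The step I expect to demand the most care is precisely this passage between $\mc(\lasu\widehat\otimes\Lambda(t,dt))$ and CDGA maps out of $\cinfinito(\lasu)$ without a finite-type hypothesis: one must verify that $Z$ lies in the class of ``Maurer--Cartan morphisms'' of Remark \ref{finalre} (it does, being supported on the three generators $a,b,x$) and that the two endpoint evaluations are genuinely intertwined with the bijection $\mc\cong\aug$. An alternative, more structural route — which sidesteps the explicit flow — is to use that $\lasu$ is the cylinder on $\mathbb{L}(u)$ in $\catdgl$, so that $\phi_a=i_0$ and $\phi_b=i_1$ are homotopic via $\id_\lasu$, and then push this homotopy through $\rho\,\cinfinito(-)$; or, in the same spirit, to pass through Corollary \ref{lie} and the CDGA inclusion $\Lambda(t,dt)\hookrightarrow{\mathfrak B}^\sharp$ of Remark \ref{dobledual}, reducing the assertion to path-connectedness of $\langle\Lambda(t,dt)\rangle\simeq\Delta^1$.
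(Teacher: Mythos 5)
Your argument is correct, but your primary route is genuinely different from the paper's. The paper obtains the homotopy $\Gamma\colon\cinfinito(\lasu)\to\Lambda(t,dt)$ by applying $\cinfinito$ to the quasi-isomorphism $\mathcal{L}(\Theta)$ of Corollary \ref{lie}, projecting onto ${\mathfrak B}^\sharp$, and factoring through the inclusion $\Lambda(t,dt)\hookrightarrow{\mathfrak B}^\sharp$ of Remark \ref{dobledual} --- exactly the ``more structural route'' you sketch in your final sentence. You instead build the homotopy by hand as the gauge flow $Z=(-tx)\cdot a$ in $\lasu\widehat\otimes\Lambda(t,dt)$; your endpoint computation via $\tfrac{e^{-\ad_x}-\id}{e^{\ad_x}-\id}=-e^{-\ad_x}$ is correct and recovers the defining property of the Lawrence--Sullivan differential, and on word-length-one generators it gives $\psi(sa^\sharp)=1-t$, $\psi(sb^\sharp)=t$, $\psi(sx^\sharp)=\pm dt$, consistent with $\varphi_a(sa^\sharp)=1$. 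You are also right that the delicate point is converting $Z$ into a CDGA map: $Z$ lives in the \emph{completed} tensor product, so neither Corollary \ref{funtormc} (finite type fails for $\lasu$) nor Remark \ref{finalre} (which concerns $L\otimes A$, not $L\widehat\otimes A$) applies verbatim; your fix --- defining $\psi$ generator by generator and noting that $dv$ is a finite sum for every generator $v$ because each $\lib^n(a,b,x)$ is finite dimensional and $\partial$ does not decrease word length --- is legitimate. As for what each approach buys: yours is self-contained modulo standard gauge-action facts and makes the ``$x$ is a path from $a$ to $b$'' picture explicit; the paper's route, besides exercising the Section 3 machinery, produces together with $\Gamma$ an explicit section $\sigma$ coming from $\cinfinito\mathcal{L}(\Omega)$, and that section is precisely what the subsequent proof of Theorem \ref{homotopia} uses to convert a CDGA homotopy back into an $L_\infty$ morphism out of $\lasu$. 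Your gauge-flow construction does not supply such a section, so if one adopts your proof of the lemma, $\sigma$ (or an equivalent device) must still be constructed separately for the theorem.
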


\begin{proof} Let $\cinfinito\mathcal{L}(\Theta)\colon \cinfinito (\lasu)\to \cinfinito\mathcal{L}({\mathfrak B})$ be the CDGA morphism obtained from applying $\cinfinito$ to the  DGL quasi-isomorphism $\mathcal{L}(\Theta)\colon \mathcal{L}({\mathfrak B}) \to \lasu $ of Corollary \ref{lie}. Observe that, on generators,
$$
\cinfinito\mathcal{L}(\Theta)=s\Theta^\sharp\colon s\lasu^\sharp\to s\bigl(\lib(s^{-1}{\mathfrak B})\bigr)^\sharp.
$$
In particular,
$$
\cinfinito\mathcal{L}(\Theta)(sa^\sharp)=\alpha_1^\sharp-\alpha_0^\sharp;\quad \cinfinito\mathcal{L}(\Theta)(sb^\sharp)=-\alpha_1^\sharp; \quad \cinfinito\mathcal{L}(\Theta)(sx^\sharp)=\beta_0^\sharp.
$$
Compose this morphism with the natural projection
$$q\colon \cinfinito\mathcal{L}({\mathfrak B})=\Lambda\bigl(s\lib(s^{-1}{\mathfrak B})\bigr)^\sharp\to {\mathfrak B}^\sharp$$
to obtain a CDGA morphism
$$
q\cinfinito\mathcal{L}(\Theta)\colon  \cinfinito (\lasu)\to {\mathfrak B}^\sharp
$$
which, on generators, satisfies the identities above and maps to zero the suspension of any decomposable element in $\lasu$.
Therefore, as $\cinfinito(\lasu)$ is a free commutative algebra and  $q\cinfinito\mathcal{L}(\Theta)$ sends the generators of this algebra to the image of the CDGA inclusion $\Lambda (t,dt)\hookrightarrow {\mathfrak B}^\sharp$ of Remark \ref{dobledual}, $q\cinfinito\mathcal{L}(\Theta)$ factors through $\Lambda (t,dt)$  to provide the CDGA morphism
$$
\Gamma\colon \cinfinito(\lasu)\to \Lambda(t,dt),
$$
$$
\Gamma (sa^\sharp)=t-1,\quad\Gamma(sb^\sharp)= -t,\quad\Gamma(sx^\sharp)=dt,\quad\Gamma(\xi)=0,\,\xi\in\bigl(s\lib^{\ge 2}(a,b,x)\bigr)^\sharp.
$$

On the other hand, observe that  the composite
$$
\sigma\colon \Lambda(t,dt)\hookrightarrow {\mathfrak B}^\sharp\hookrightarrow \cinfinito\mathcal{L}({\mathfrak B})\stackrel{\cinfinito\mathcal{L}(\Omega)}{\longrightarrow}\cinfinito(\lasu),
$$
is a section of $\Gamma$.

Finally, by definition, the CDGA morphism $\varphi_a\colon\cinfinito(\lasu)\to\bk$ (resp. $\varphi_b$) maps $sa^\sharp$ (resp. $sb^\sharp$) to $1$ and any other generator of $\bigl(s\lib(a,b,x)\bigr)^\sharp$ to $0$. Thus, $\varepsilon_0\Gamma=\varphi_a$ and $\varepsilon_1\Gamma=\varphi_b$.
\end{proof}

\begin{proof}[Proof of Theorem  \ref{homotopia}] Assume $z_0\sim z_1$ via the mild $L_\infty$ morphism $\phi\colon \lasu\to L$ and observe that, by Proposition \ref{inicial3}, $\varphi_{z_0}=\varphi_a\cinfinito(\phi)$ and $\varphi_{z_1}=\varphi_b\cinfinito(\phi)$. Thus, by  Lemma \ref{homotopiaintervalo}, $\varphi_{z_i}=\varepsilon_i\Gamma\cinfinito(\phi)$, $i=1,2$, and $\varphi_{z_0}\sim \varphi_{z_1}$ via $\Gamma\cinfinito(\phi)$.

On the other hand if $\varphi_{z_0}\sim \varphi_{z_1}$ via $\psi\colon\cinfinito(L)\to \Lambda(t,dt)$, consider the morphism
$$
\sigma\psi\colon\cinfinito(L)\to\cinfinito(\lasu)
$$
with $\sigma$ the section of $\Gamma$ in the proof of Lemma \ref{homotopiaintervalo}. Write this composition  as $\cinfinito(\phi)$ with $\phi\colon \lasu\to L$ and note, again via Proposition \ref{inicial3}, that $\mc(\phi)(a)=z_0$ and $\mc(\phi)(b)=z_1$.
\end{proof}

On the other hand, one can extend the classical notion of homotopy of Maurer-Cartan elements of DGL's to $\catlinfinito$. For a given $L_\infty$ algebra we denote by $\eta_0,\eta_1\colon L\otimes\Lambda(t,dt)\to L$ the mild $L_\infty$ morphisms resulting from evaluating $t$ in $0$ and $1$ respectively.

\begin{definition}
Let $L\in \catlinfinito$ and let $z_0,z_1\in \mc(L)$. We say that $z_0$ and $z_1$ are {\em Quillen homotopic or simple Q-homotopic} and denote it by $z_0\sim_Qz_1$ if there is a Maurer-Cartan element $\Phi\in\mc \bigl(L\otimes\Lambda (t,dt)\bigr)$ such that $\mc(\eta_i)(\Phi)=z_i$, $i=0,1$.
\end{definition}

Then we have:

\begin{proposition} Let $L\in \catlinfinito$ and $z_0,z_1\in \mc(L)$. Then,
$$z_0\sim z_1\quad\text{if and only if}\quad z_0\sim_Qz_1.
$$
\end{proposition}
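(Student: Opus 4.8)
The plan is to funnel both relations through the single CDGA statement $\varphi_{z_0}\sim\varphi_{z_1}$ and then quote Theorem \ref{homotopia}, which already gives $z_0\sim z_1$ if and only if $\varphi_{z_0}\sim\varphi_{z_1}$. So it suffices to show that $z_0\sim_Q z_1$ if and only if $\varphi_{z_0}\sim\varphi_{z_1}$. The bridge is the identification $\mc(L\otimes A)\cong\catcdga(\cinfinito(L),A)$ of Corollary \ref{funtormc}; since $A=\Lambda(t,dt)$ makes $L\otimes\Lambda(t,dt)$ mild but not of finite type, one works with the refinement in Remark \ref{finalre}, identifying $\mc(L\otimes\Lambda(t,dt))$ with the set of Maurer--Cartan morphisms $\cinfinito(L)\to\Lambda(t,dt)$. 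Write $\varepsilon_0,\varepsilon_1\colon\Lambda(t,dt)\to\bk$ for the evaluations of $t$ at $0$ and $1$, so that the endpoint $L_\infty$ morphisms are $\eta_i=\id_L\otimes\varepsilon_i$.

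The structural point I would establish first is that this identification is \emph{natural in the CDGA variable with respect to the $\varepsilon_i$}. For a CDGA morphism $f\colon A\to A'$ the map $\id_L\otimes f$ is a \emph{strict} $L_\infty$ morphism (since $f$ commutes with $d$ and is multiplicative, it commutes with all the tensor brackets of the excerpt), hence $\mc(\id_L\otimes f)$ is just $\id_L\otimes f$ applied to Maurer--Cartan elements; and unwinding the recipe in the proof of Corollary \ref{funtormc} — which assigns to $z\in\mc(L\otimes A)$ the algebra morphism $\cinfinito(L)\to A$ extending the linear map $(sL)^\sharp\to A$, $(sz_i)^\sharp\mapsto\sum_j c_{ij}a_j$, attached to $z=\sum_{i,j}c_{ij}(z_i\otimes a_j)$ — one sees at once that the morphism attached to $(\id_L\otimes f)(z)$ is $f$ followed by the one attached to $z$. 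Specializing to $A'=\bk$, the identification of Corollary \ref{funtormc} degenerates to the bijection $\mc(L)\cong\aug\cinfinito(L)$ of Proposition \ref{inicial3}, i.e. to $z\leftrightarrow\varphi_z$. Consequently, if $\Phi\in\mc(L\otimes\Lambda(t,dt))$ corresponds to $\psi\colon\cinfinito(L)\to\Lambda(t,dt)$, then $\mc(\eta_i)(\Phi)$ corresponds to $\varepsilon_i\psi$, that is, $\varphi_{\mc(\eta_i)(\Phi)}=\varepsilon_i\psi$.

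Granting this, both implications are immediate. If $z_0\sim_Q z_1$, pick $\Phi\in\mc(L\otimes\Lambda(t,dt))$ with $\mc(\eta_i)(\Phi)=z_i$ and let $\psi$ be the associated Maurer--Cartan morphism; then $\varepsilon_i\psi=\varphi_{z_i}$, so $\psi$ witnesses $\varphi_{z_0}\sim\varphi_{z_1}$, whence $z_0\sim z_1$ by Theorem \ref{homotopia}. Conversely, if $z_0\sim z_1$, Theorem \ref{homotopia} supplies a homotopy $\psi\colon\cinfinito(L)\to\bk\otimes\Lambda(t,dt)=\Lambda(t,dt)$ with $\varepsilon_i\psi=\varphi_{z_i}$; since $L$ is of finite type and $\Lambda(t,dt)$ lives in only two consecutive degrees, $\psi$ is automatically supported on a finite-dimensional subspace of $(sL)^\sharp$, and one checks (using the mildness of the morphism $\phi\colon\lasu\to L$ that produced it, in the notation of Theorem \ref{homotopia} and Lemma \ref{homotopiaintervalo}) that $\psi$ is a Maurer--Cartan morphism; hence $\psi$ corresponds to some $\Phi\in\mc(L\otimes\Lambda(t,dt))$, and $\varphi_{\mc(\eta_i)(\Phi)}=\varepsilon_i\psi=\varphi_{z_i}$ forces $\mc(\eta_i)(\Phi)=z_i$ by injectivity of $z\mapsto\varphi_z$. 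Thus $\Phi$ is a $Q$-homotopy from $z_0$ to $z_1$.

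The crux — and the only place where genuine work is hidden — is the naturality claim of the second paragraph together with the finiteness bookkeeping it drags along: because neither $L\otimes\Lambda(t,dt)$ nor $\lasu$ is of finite type, the whole argument must be run inside the class of Maurer--Cartan morphisms of Remark \ref{finalre}, and one must verify both that the correspondence of Corollary \ref{funtormc} transports $\eta_i$ to $\varepsilon_i$ and that the homotopies coming out of Theorem \ref{homotopia} (concretely, the morphism $\Gamma\,\cinfinito(\phi)$ interpolating the genuine augmentations $\varphi_{z_0},\varphi_{z_1}$) satisfy the finite-support and bounded-$dV$ conditions needed to arise from a Maurer--Cartan element of $L\otimes\Lambda(t,dt)$. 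Everything beyond this is a routine diagram chase through Theorem \ref{homotopia}, Corollary \ref{funtormc} and Proposition \ref{inicial3}.
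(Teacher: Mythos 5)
Your proposal is correct and follows essentially the same route as the paper: both directions are funneled through the identification of $\mc\bigl(L\otimes\Lambda(t,dt)\bigr)$ with CDGA morphisms $\cinfinito(L)\to\Lambda(t,dt)$ from Corollary \ref{funtormc} (refined by Remark \ref{finalre}), the compatibility of this identification with the evaluations $\varepsilon_i$, and Theorem \ref{homotopia}. Your write-up merely makes explicit the naturality check and the finite-support bookkeeping that the paper compresses into ``one explicitly checks'' and ``the reverse procedure proves the other implication.''
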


\begin{proof}

Assume $z_0\sim_Q z_1$ and let  $\Phi\in\mc \bigl(L\otimes\Lambda (t,dt)\bigr)$ such that $\mc(\eta_i)(\Phi)=z_i$, $i=0,1$. Denote by $\varphi_\Phi\in\aug\cinfinito\bigl(L\otimes\Lambda(t,dt)\bigr)$ and $\varphi_{z_0},\varphi_{z_1}\in\aug\cinfinito(L)$ the associated  augmentations via the bijection in Proposition \ref{inicial3}; see also Remark \ref{finalre}. Then, also in light of Proposition \ref{inicial3}, we have:
$$
\cinfinito(\eta_i)\varphi_\Phi=\varphi_{z_i},\qquad i=0,1.
$$
On the other hand, by Corollary \ref{funtormc}, see also Remark \ref{finalre}, $\varphi_\Phi$ is identified with a  CDGA morphism  $\Psi\colon \cinfinito(L)\to\Lambda(t,dt)$. Via this identification one explicitly checks that the above equation reads
$$
\varepsilon_i\Psi=\varphi_{z_i},\qquad i=0,1,
$$
and, by Theorem \ref{homotopia}, $z_0\sim z_1$. The reverse procedure proves the other implication.
\end{proof}

\begin{remark}\label{gauge}
Through our notion of homotopy via the Lawrence-Sullivan construction one sees immediately the classical connection with homotopy in ${\catdgl}$ via the gauge action. Let  $L$ be either a complete free Lie algebra or any DGL in which the adjoint action of  $L_0$ is locally nilpotent, i.e., for any $x\in L_0$ there is an integer $i$ such that $\ad_x^i=0$. The {\em gauge action} of $L_0$ on $\mc(L)$  (see for instance \cite{lasu,mane}) is defined as follows: given $x\in L_0$ and $z\in \mc(L)$,
$$x*z=e^{\ad_x}(z)-f_x(\partial x),$$
where $e^{\ad_x}=\sum_{n\ge0}\frac{(\ad_x)^n}{n!}$ and, as operator,
$$f_x=\frac{e^{\ad_x}-{\rm id}}{\ad_x}.
$$
Explicitly,
$$
x*z=\sum_{i\ge0}\frac{\ad_x^i(z)}{i!}-\sum_{i\ge0}\frac{\ad_x^{i}(\partial x)}{(i+1)!}.
$$
We say that two elements $z_0,z_1\in \mc(L)$ are {\em gauge homotopic} and write $z_0\sim_g z_1$ if $x*z_0=z_1$ for some $x\in L_0$. Then it is easy to see \cite[Proposition 3.1]{bumu} that $z_0\sim_gz_1$ if and only if there is a DGL morphism $\phi\colon\lasu\to L$ such that $\Phi(a)=z_0$ and $\Phi(b)=z_1$. \end{remark}

\begin{definition}\label{moduli}  Denote by $\widetilde\mc(L)=\mc(L)/\sim$ the quotient set of homotopy classes of Maurer-Cartan elements. Whenever there is no ambiguity we do not distinguish between a Maurer-Cartan element and its class in  $\widetilde\mc(L)$.
\end{definition}

An immediate consequence of Theorem \ref{homotopia} reads as follows.

\begin{proposition} Let $z_0,z_1\in \mc(L)$, with $z_0\sim z_1$, and let $g\colon L\to M$ be a mild $L_\infty$ morphism, then $\mc(g)(z_0)\sim\mc(g)(z_1)$. In particular $\mc(g)$ induces a map
$$
\widetilde\mc(g)\colon\widetilde\mc(L)\to\widetilde\mc(M)
$$
\end{proposition}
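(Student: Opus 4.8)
The plan is to reduce the statement directly to Theorem~\ref{homotopia}, using the naturality of the augmentation correspondence established in Proposition~\ref{inicial3}. First I would fix $z_0, z_1 \in \mc(L)$ with $z_0 \sim z_1$ and a mild $L_\infty$ morphism $g\colon L\to M$. By Theorem~\ref{homotopia}, $z_0\sim z_1$ is equivalent to $\varphi_{z_0}\sim\varphi_{z_1}$ as CDGA morphisms $\cinfinito(L)\to\bk$, that is, there is a morphism $\psi\colon\cinfinito(L)\to\Lambda(t,dt)$ with $\varepsilon_i\psi=\varphi_{z_i}$ for $i=0,1$.

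Next I would transport this homotopy across $\cinfinito(g)$. The composite $\psi\cinfinito(g)\colon\cinfinito(M)\to\Lambda(t,dt)$ satisfies $\varepsilon_i\bigl(\psi\cinfinito(g)\bigr)=\varphi_{z_i}\cinfinito(g)$. By the functoriality part of Proposition~\ref{inicial3}, $\aug(g)(\varphi_{z_i})=\varphi_{z_i}\cinfinito(g)$ is precisely the augmentation corresponding to $\mc(g)(z_i)$; in other words $\varphi_{z_i}\cinfinito(g)=\varphi_{\mc(g)(z_i)}$. Hence $\psi\cinfinito(g)$ exhibits $\varphi_{\mc(g)(z_0)}\sim\varphi_{\mc(g)(z_1)}$, and applying Theorem~\ref{homotopia} once more in the reverse direction yields $\mc(g)(z_0)\sim\mc(g)(z_1)$. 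This shows $\mc(g)$ descends to a well-defined map $\widetilde\mc(g)\colon\widetilde\mc(L)\to\widetilde\mc(M)$ on homotopy classes.

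The only point requiring a little care is the precise matching of augmentations: one must confirm that the explicit description of $\varphi_z$ (via $\varphi_z=\rho\cinfinito(\phi_u)$ for the $L_\infty$ morphism $\phi_u\colon\lib(u)\to L$ of Lemma~\ref{existenciaMC}) is compatible with the identification $\aug(g)(\varphi_z)=\varphi_{\mc(g)(z)}$. But this compatibility is exactly what the computation at the end of the proof of Proposition~\ref{inicial3} establishes, where $\rho\cinfinito(g\phi)$ is shown to correspond to $\sum_k\frac{1}{k!}g^{(k)}(z\otimes\cdots\otimes z)=\mc(g)(z)$ under the bijection $\mc(M)\cong\aug\cinfinito(M)$. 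So no genuine obstacle arises here; the main work has already been done in Proposition~\ref{inicial3} and Theorem~\ref{homotopia}, and the present proposition is essentially a formal consequence. Alternatively, one could argue more directly at the level of $\lasu$: given the mild $L_\infty$ morphism $\phi\colon\lasu\to L$ realizing $z_0\sim z_1$, the composite $g\phi\colon\lasu\to M$ is again a mild $L_\infty$ morphism, and $\mc(g\phi)(a)=\mc(g)(\mc(\phi)(a))=\mc(g)(z_0)$, $\mc(g\phi)(b)=\mc(g)(z_1)$ by the functoriality of $\mc$ on $\catlinfinito$; this realizes the homotopy $\mc(g)(z_0)\sim\mc(g)(z_1)$ directly from the definition, provided one knows $\mc(g\phi)=\mc(g)\circ\mc(\phi)$, which again is part of Proposition~\ref{inicial3}.
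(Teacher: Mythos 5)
Your proposal is correct and follows essentially the same route as the paper's own proof: translate $z_0\sim z_1$ into a homotopy of augmentations via Theorem \ref{homotopia}, compose that homotopy with $\cinfinito(g)$, and use the identification of $\mc(g)$ with $\aug(g)$ from Proposition \ref{inicial3} to conclude $\mc(g)(z_0)\sim\mc(g)(z_1)$. The extra care you take in matching $\varphi_{z_i}\cinfinito(g)$ with $\varphi_{\mc(g)(z_i)}$, and the alternative argument via composing with $\phi\colon\lasu\to L$, are both consistent with what the paper already establishes.
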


\begin{proof}  Let $\varphi_i\colon \cinfinito(L)\to\bk$ be the augmentations representing $z_i$, $i=0,1$. Then, by Theorem \ref{homotopia} $\varphi_0\sim\varphi_1$. On the other hand,
 Proposition \ref{inicial3} identifies $\mc(g)$  with the map
$$
\aug\cinfinito(g)\colon \aug \cinfinito(L)\to\aug\cinfinito(M)
$$
defined by composition. Then,
$$
\mc(L)(z_0)=\cinfinito(g)\varphi_{0}\sim \cinfinito(g)\varphi_{1}=\mc(L)(z_1).
$$
\end{proof}

However,
 It has been known for a long time that quasi-isomorphisms of $L_\infty$ algebras do not preserve, in general, homotopy classes of Maurer-Cartan elements \cite{kon} unless special restrictions are required (see for instance \cite[Proposition 4.9]{getz}). That is, $\widetilde\mc(g)$ is not one-to-one in general for a given quasi-isomorphism.

 An example of this, which also shows that the cochain functor does not preserve quasi-isomorphism is the following: the  DGL  $(\lib(u),\partial)$ generated by a Maurer-Cartan element is trivially quasi-isomorphic to $0$ whose Maurer-Cartan set is empty. On the other hand, see the beginning of Section 2, the cohomology algebra of $\cinfinito(\lib(u),\partial)$ is isomorphic to two copies of the field in degree zero while the cochain algebra of the trivial $L_\infty$ algebra is isomorphic to the ground field.

 In order to keep invariance of the set of homotopy classes of Maurer-Cartan elements we need a stronger version of quasi-isomorphisms. This is precisely the aim of the next proposition which also extends \cite[Theorem 3.4.3]{ke} to $\catlinfinito$.

\begin{proposition} Let $g\colon L\to M$ be a morphism in $\catlinfinito$ such that $\cinfinito(g)$ is a quasi-isomorphism of cofibrant CDGA's. Then,
$$
\widetilde\mc(g)\colon \widetilde\mc(L)\stackrel{\cong}{\longrightarrow}\widetilde\mc(M)
$$
is a bijection.
\end{proposition}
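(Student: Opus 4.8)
The plan is to transport the statement to the category of CDGA's via the cochain functor and then argue with a Whitehead-type lemma. By Proposition \ref{inicial3} the bijection $\mc(L)\cong\aug\cinfinito(L)$, $z\mapsto\varphi_z$, is natural, and by Theorem \ref{homotopia} it is compatible with the homotopy relations on both sides, hence descends to a bijection
$$
\widetilde\mc(L)\;\cong\;\aug\cinfinito(L)/\!\sim\,,
$$
the right-hand side being the set of homotopy classes of CDGA morphisms $\cinfinito(L)\to\bk$, homotopy taken with respect to the path object $\bk\otimes\Lambda(t,dt)=\Lambda(t,dt)$. Under this identification the (already well-defined) map $\widetilde\mc(g)$ of the previous proposition is induced by precomposition with $\cinfinito(g)\colon\cinfinito(M)\to\cinfinito(L)$, that is, $[\varphi]\mapsto[\varphi\,\cinfinito(g)]$. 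Since $\cinfinito(L)$ and $\cinfinito(M)$ are assumed cofibrant, this homotopy relation is an honest equivalence relation, so the quotient sets above are well defined; it thus remains only to prove that $[\varphi]\mapsto[\varphi\,\cinfinito(g)]$ is a bijection.

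For this I would invoke the Bousfield--Gugenheim model structure on CDGA's, in which the weak equivalences are the quasi-isomorphisms, the fibrations are the surjections, and every object is fibrant. Consequently $\cinfinito(g)\colon\cinfinito(M)\to\cinfinito(L)$ is a weak equivalence between objects that are both cofibrant and fibrant, hence a genuine homotopy equivalence: there is a CDGA morphism $h\colon\cinfinito(L)\to\cinfinito(M)$ with
$$
h\,\cinfinito(g)\sim\id_{\cinfinito(M)}\qquad\text{and}\qquad \cinfinito(g)\,h\sim\id_{\cinfinito(L)}.
$$
This step is the one place the cofibrancy hypothesis is used, and it is precisely the extension of \cite[Theorem 3.4.3]{ke} to the present framework.

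Finally, precomposition with $h$ defines a map $[\psi]\mapsto[\psi\,h]$ from $\aug\cinfinito(M)/\!\sim$ to $\aug\cinfinito(L)/\!\sim$; it is well defined because composing a homotopy with a fixed CDGA morphism is again a homotopy. For an augmentation $\varphi\colon\cinfinito(L)\to\bk$, composing a homotopy $\cinfinito(g)\,h\sim\id_{\cinfinito(L)}$ with $\varphi$ gives $\varphi\,\cinfinito(g)\,h\sim\varphi$; symmetrically, composing $h\,\cinfinito(g)\sim\id_{\cinfinito(M)}$ with any $\psi\colon\cinfinito(M)\to\bk$ gives $\psi\,h\,\cinfinito(g)\sim\psi$. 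Hence the two composites of $\widetilde\mc(g)$ with precomposition by $h$ are the respective identities, so $\widetilde\mc(g)$ is a bijection.

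The delicate point is entirely the model-categorical input: one must check that the homotopy relation on augmentations used to define $\widetilde\mc$ agrees with right homotopy in this model structure, that every CDGA is fibrant, and therefore that a quasi-isomorphism between cofibrant CDGA's admits an honest homotopy inverse rather than merely a zig-zag. Granting this, everything else is a formal manipulation of homotopies together with the naturality supplied by Proposition \ref{inicial3} and Theorem \ref{homotopia}.
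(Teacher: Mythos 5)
Your proof is correct and takes essentially the same route as the paper: both reduce, via Proposition \ref{inicial3} and Theorem \ref{homotopia}, to the statement that a quasi-isomorphism between cofibrant CDGA's induces a bijection on homotopy classes of augmentations with respect to the path object $\Lambda(t,dt)$. The paper settles this in one line by citing homotopy lifting between cofibrant objects, while you spell out the equivalent Whitehead-type argument (all objects fibrant, so the quasi-isomorphism has an honest homotopy inverse); these are the same model-categorical input.
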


\begin{proof} Write $\cinfinito(g)$ as $\psi\colon(\Lambda W,d)\stackrel{\simeq}{\to} (\Lambda V,d)$. The fact that $\psi$ induces a bijection on homotopy classes of augmentations
$$
Aug(\Lambda W,d)/\sim\,\,\,\, \stackrel{\cong}{\longrightarrow}\,\, Aug(\Lambda V,d)/\sim
$$
is a classical and well known fact on $\catcdga$ easily deduced via the homotopy lifting of morphisms between cofibrant objects.
\end{proof}

As this proposition shows, the right thing to do, at least from the axiomatic point of view, would be to consider  the stronger class of quasi-isomorphisms formed by those $g$ for which $\mathcal{C}_\infty (g)$ or equivalently $\cinfinito(g)$ are quasi-isomorphisms. Indeed, this is the dual of the choice of Hinich in \cite[Theorem 3.1]{hi2} to successfully endow the category of cocommutative differential graded coalgebras with a structure of closed model category.

We finish this section by briefly fitting the homotopy notion into an axiomatic framework. It is known, see \cite[\S4]{bousgu} for non-negatively graded CDGA's and \cite[\S4]{hi} for the general case,  that $\catcdga$ has a structure of closed model category in which the fibrations are surjective morphisms and the weak equivalences are the quasi-isomorphisms. Cofibrations are the maps which have the lifting property with respect to acyclic fibrations and are characterized as retracts of the so called standard cofibrations. A path object for this closed model structure is the acyclic algebra $\Lambda(t,dt)$  and thus,  the usual homotopy  on morphisms departing from a cofibrant CDGA is an equivalence relation \cite[Lemma 4]{qui2}. In particular, the  homotopy  notion on $\mc(L)$ for a given  $L_\infty$ algebra $L$ is an equivalence relation as long as $\cinfinito(L)$ is a cofibrant CDGA, which is not always the case. Indeed \cite[Remark 2.2.5]{hi}, cofibrant algebras are the $\bz$-graded version of the classical {\em Sullivan algebras} \cite[\S12]{fehatho}. Hence, not all free CDGA's are cofibrant. A classical and easy counterexample is given by the free CDGA $A=(\Lambda(x,y,z),d)$ generated by degree $1$ elements in which $dx=yz$, $dy=zx$ and $dz=xy$. Consider the CDGA $B=\langle a,b,c\rangle $ generated by degree $1$ elements and with trivial products and differential. Let $\beta\colon (\Lambda (u,v,w,\ldots),d)\stackrel{\simeq}{\to} B$  be its minimal model, which sends the cycles $u,v,w$ to $a,b,c$ respectively.  Then, the CDGA morphism $\gamma\colon A\to B$, which sends $x,y,z$ to $a,b,c$ respectively, does not have a lifting to $\beta$ which is a trivial fibration. Even if one considers the slightly different closed model structure on $\catcdga$, arising from dualizing the one in the category ${\bf CDGC}$ of cocommutative differential graded coalgebras \cite[\S3]{hi2}, $A$ is not cofibrant as $\beta$ is again a surjective weak equivalence with this closed model structure \cite[Proposition 3.3.2(3)]{hi2}.

Thus, whenever one wants to use the full potential of the closed model category structure in $\catcdga$ to derive geometrical properties of a given  $L\in\catlinfinito$ via its cochain functor, it is necessary to assume $\cinfinito(L)$ to be a cofibrant CDGA.  This is equivalent to any of the following.

(i) There exists a well ordered basis $\{x_i\}_{i\in I}$ of $L$   such that, for each $k\ge 1$, the class of
$$
\ell_k(x_{i_1},\ldots,x_{i_k})
$$
is zero in $L/L^{>j}$
where $j=\max\{i_1,\ldots,i_k\}$ and $L^{<j}$ stands for the span of $\{x_i\}_{i<j}$.

(ii) Denote by $G^r$ the subspace of $L$ generated by the image of the maps obtained by composition of at most $r$ operations in  $\{\ell_k\}$. Note that $L=G^0\supset G^1\supset\cdots\supset G^r\supset G^{r+1}\supset\cdots$.  Then, for each $x\in L$, there exists some $r$ such that $x\in G^r$ and $[x]\not=0$ in  $G^r/G^{r+1}$.

\section{Realization, components, homotopy  invariance}\label{casifinal}

Points of a given space, or better, $0$-simplices of a  simplicial set are characterized by their algebraic counterparts: they are augmentations of a  CDGA or Maurer-Cartan elements of a DGL, or of an $L_\infty$ algebra, with each of these objects modeling the given simplicial set.  Moreover, one can recover the path component $S_x$ of  a $0$-simplex $x$ in a simplicial set $S$, by carefully truncating the CDGA, DGL or $L_\infty$ algebra modeling $S$,  and perturbing the differential via the augmentation or Maurer-Cartan element describing the given $0$-simplex. This goes back to \cite{brown,haef} in $\catcdga$, to \cite{bufemu} in $\catdgl$, and to \cite{ber,bufemu2} for $L_\infty$ algebras. Here, we set the connection between these procedures and prove their homotopy invariance both in $\catcdga$ and $\catlinfinito$.

In the $\catcdga$ setting we follow the approach and notation in \cite[\S4]{bumu2}.   Let   $f\colon A\to\bk$ be an augmentation of the CDGA $A$ which is, by definition, a $0$-simplex of the Sullivan realization $\langle
A \rangle$. Consider the differential ideal $K_f$ of $A$ generated by
 $A^{< 0}$, $d A^0$ and $\{ a -f(a ),\, a \in A^0 \}$. Then, when $A$ is a
free CDGA $(\Lambda V,d)$, and $f$ is an augmentation, the proof of \cite[Theorem 6.1]{brown}  shows that the projection $\Lambda
V\to \Lambda V/K_f$ induces a homotopy  equivalence of simplicial sets
$$\langle \Lambda V/K_f\rangle \stackrel{\simeq }{\to }\langle
\Lambda V\rangle_f.$$

 One of the main result of this section is that free CDGA's preserve the homotopy type when localized at homotopy augmentations.

\begin{theorem}\label{invarianzadga}
Let $f,g\colon(\Lambda V,d)\to\bk$ be homotopic augmentations. Then, the CDGA's $\Lambda V/K_f$ and $\Lambda V/K_g$ have the same homotopy type. In particular, as both are positively graded, the simplicial sets $\langle
\Lambda V\rangle_f$ and $\langle
\Lambda V\rangle_g$ are homotopy equivalent.
\end{theorem}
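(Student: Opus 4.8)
The statement consists of a homotopy assertion about the CDGA's and a geometric corollary; I would dispatch the corollary first, as it is essentially formal. The homotopy $f\simeq g$ is a CDGA morphism $H\colon(\Lambda V,d)\to\Lambda(t,dt)$ with $\varepsilon_0 H=f$ and $\varepsilon_1 H=g$. Since $\langle\Lambda(t,dt)\rangle$ is contractible, in particular connected, its $0$-simplices $\varepsilon_0,\varepsilon_1$ are joined by an edge path, and $\langle H\rangle\colon\langle\Lambda(t,dt)\rangle\to\langle\Lambda V\rangle$ maps it to a path from $\langle H\rangle(\varepsilon_0)=f$ to $\langle H\rangle(\varepsilon_1)=g$; thus $f$ and $g$ lie in the same path component and $\langle\Lambda V\rangle_f=\langle\Lambda V\rangle_g$. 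Combined with the already established equivalences $\langle\Lambda V/K_f\rangle\simeq\langle\Lambda V\rangle_f$ and $\langle\Lambda V/K_g\rangle\simeq\langle\Lambda V\rangle_g$ this yields the last assertion; alternatively it will follow from the CDGA statement once we know both quotients are positively graded.

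For the CDGA statement itself the geometric picture is not enough, and the plan is to build an explicit zig-zag of quasi-isomorphisms by performing the $K$-construction \emph{over the interval}. Put $R=\Lambda(t,dt)$, form the free CDGA $A_R=\Lambda V\otimes R$ and let $\widetilde H\colon A_R\to R$ be the $R$-augmentation determined by $\widetilde H(a\otimes r)=H(a)\,r$. Mimicking the definition of $K_f$, let $\widetilde K\subset A_R$ be the differential ideal generated by $(\Lambda V)^{<0}\otimes 1$ and by $\{\,v\otimes 1-1\otimes H(v)\ :\ v\in(\Lambda V)^{0}\,\}$, and set $P=A_R/\widetilde K$. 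The evaluation maps $\mathrm{ev}_i=\id\otimes\varepsilon_i\colon A_R\to\Lambda V$ are surjective chain algebra morphisms, and a direct check on generators shows that $\mathrm{ev}_0$ maps $\widetilde K$ onto $K_f$ and $\mathrm{ev}_1$ maps $\widetilde K$ onto $K_g$; the one delicate point is that $\varepsilon_i(dt)=0$, so that $\mathrm{ev}_i$ carries the derived relation $d\bigl(v\otimes 1-1\otimes H(v)\bigr)=dv\otimes 1-1\otimes dH(v)$ to $dv$, recovering the $d(\Lambda V)^{0}$-generators. Hence base change along $\varepsilon_0$ and $\varepsilon_1$ gives natural isomorphisms $P\otimes_{R,\varepsilon_0}\bk\cong\Lambda V/K_f$ and $P\otimes_{R,\varepsilon_1}\bk\cong\Lambda V/K_g$, and the two projections assemble into
\[
\Lambda V/K_f\ \longleftarrow\ P\ \longrightarrow\ \Lambda V/K_g .
\]

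The heart of the argument — and the step I expect to be the real obstacle — is to prove that these two projections $P\to\Lambda V/K_f$ and $P\to\Lambda V/K_g$ are quasi-isomorphisms; unlike in the absolute case the differential induced on $P$ is genuinely twisted and $P$ need not be free over $R$, so a bare ``flat base change'' argument is unavailable. I would obtain it from the classical Perturbation Lemma, using that $R=\Lambda(t,dt)$ is acyclic: the kernel of $P\to\Lambda V/K_f$ is built from the augmentation ideal $\ker\varepsilon_0\subset R$, an acyclic subcomplex equipped with the explicit contracting homotopy $h(t^{n}dt)=\frac{1}{n+1}t^{n+1}$, $h(t^{n})=0$; writing the differential on that kernel as a perturbation of $d_R\otimes\id$ and transferring $h$, the perturbation series should converge because its $t$-weight preserving part $\vartheta_0$ already satisfies $(h\vartheta_0)^{2}=0$ — a formal consequence of $d^{2}=0$ on $\Lambda V/K_f$ — while the complementary part strictly raises the $t$-weight, which is finite on any given element. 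This is the algebraic shadow of the fact that a fibration over the contractible base $\langle\Lambda(t,dt)\rangle$ has quasi-isomorphic fibres. Once it is in place, the displayed zig-zag shows that $\Lambda V/K_f$ and $\Lambda V/K_g$ have the same homotopy type, and, both being positively graded, the simplicial sets $\langle\Lambda V\rangle_f$ and $\langle\Lambda V\rangle_g$ are homotopy equivalent; everything else is routine diagram chasing together with the quoted results of Brown.
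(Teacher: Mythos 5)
Your treatment of the ``in particular'' clause is fine (the homotopy $H$ is literally a $1$--simplex of $\langle\Lambda V\rangle$ joining $f$ and $g$, so $\langle\Lambda V\rangle_f=\langle\Lambda V\rangle_g$, and Brown's theorem does the rest), and your overall plan --- a zig-zag $\Lambda V/K_f\leftarrow P\to\Lambda V/K_g$ through a fiberwise $K$--construction on the path object $\Lambda V\otimes\Lambda(t,dt)$ --- is genuinely different from the paper, which instead forms the Lannes/Brown--Szczarba algebra $\Lambda(V\otimes{\mathfrak B})$ on the universal acyclic coalgebra ${\mathfrak B}$, equips it with an honest augmentation to $\bk$ induced by $h$, and compares the two absolute localizations inside $\Lambda(V\otimes{\mathfrak B})/K_H$ via the splitting ${\mathfrak B}=\bk\alpha_0\oplus C\oplus\delta C$ and the Five Lemma.

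However, the step you yourself flag as the heart of the argument is not merely unproved: it is false. Take $\Lambda V=\Lambda(x)$ with $|x|=0$, $dx=0$, $f(x)=0$, $g(x)=1$, $H(x)=t$. Then $K_f=(x)$, $K_g=(x-1)$ and both quotients are $\bk$, but your ideal $\widetilde K$ is generated by $\{x^n\otimes1-1\otimes t^n\}_{n\ge1}$ together with $d(x\otimes1-1\otimes t)=-dt$, i.e.\ $\widetilde K=(x-t,\,dt)$, so that $P=\Lambda(x,t,dt)/(x-t,dt)\cong(\bk[t],0)$. Hence $H^0(P)=\bk[t]$ is infinite dimensional and neither projection $P\to\bk$ is a quasi-isomorphism. (Your base-change isomorphisms $P\otimes_{R,\varepsilon_i}\bk\cong\Lambda V/K_{f}$, $\Lambda V/K_{g}$ are correct, but $P\to P\otimes_{R}\bk$ is not a quasi-isomorphism, and no perturbation argument can repair this.) The root of the problem is structural: the absolute ideal $K_f$ collapses the entire degree-zero part of $\Lambda V$ onto $\bk$, whereas your relative ideal only identifies $(\Lambda V)^0$ with its image in $\bk[t]$, leaving a polynomial algebra in degree zero; the ``acyclic fibre'' heuristic fails because the localization is taken fiberwise over $R$ rather than absolutely. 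This is precisely the difficulty the paper's construction is designed to avoid: by dualizing $\Lambda(t,dt)$ into the coalgebra ${\mathfrak B}$ and forming $\Lambda(V\otimes{\mathfrak B})$, the homotopy $h$ becomes a genuine augmentation $H\colon\Lambda(V\otimes{\mathfrak B})\to\bk$, the two inclusions $v\mapsto v\otimes\alpha_0$ and $v\mapsto v\otimes\sum_i\alpha_i$ are augmented quasi-isomorphisms, and the comparison of the absolute ideals $K_f,K_g\hookrightarrow K_H$ can be checked directly. To salvage your route you would have to replace $P$ by a middle term that is itself an augmented CDGA with one-dimensional degree-zero cohomology, which essentially forces something like the paper's construction.
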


\begin{proof}
Let $h\colon \Lambda V\to \Lambda(t,dt)$ be a homotopy between $f$ an $g$. It induces a natural linear map $H\colon\Lambda V\otimes {\mathfrak B}\to \bk$, where ${\mathfrak B}$ is the universal acyclic CDGC (see Definition \ref{universal2}), given by $H(\Phi\otimes\gamma)= (-1)^{|\Phi||\gamma|}  \gamma\bigl(h(\Phi)\bigr)$. We extend this map to a CDGA morphism by considering the  Lannes functor \cite{lannes}  in the category $\catcdga$ (see \cite[\S3]{brown} or \cite[\S2]{bumu2}). Let $\Lambda(\Lambda V\otimes {\mathfrak B})$ be the free algebra generated by $\Lambda V\otimes {\mathfrak B}$  with the differential induced by the tensor product differential on the generators. Consider the ideal $J$ generated by $1\otimes \alpha_0-1$ and by the elements of the form
$$uv\otimes \gamma -\sum_j (-1)^{|v| |{\gamma_j}' |}(u\otimes {\gamma_j}'
)(v\otimes {\gamma_j}'' ),$$
 where $u,v\in V,\ \gamma \in {\mathfrak B}$, and $ \Delta\gamma =\sum_j
{\gamma_j}'\otimes {\gamma_j}''$. Then, the composition
$$\Lambda (V\otimes {\mathfrak B})\subset   \Lambda (\Lambda V\otimes {\mathfrak B})\twoheadrightarrow  \Lambda (\Lambda V\otimes
{\mathfrak B})/J$$
is an isomorphism of graded algebras \cite[Theorem 1.2]{brown}, and we consider in $\Lambda (V\otimes {\mathfrak B})$ the
differential
$\widetilde d$ so that it becomes  a CDGA isomorphism. Explicitly, set $\Delta^{(0)}= {\rm id}_{\mathfrak B}$, $\Delta^{(1)}= \Delta$ and
$$
\Delta^{(m)}=(\Delta \otimes  {\rm id}_{\mathfrak B} \otimes \cdots \otimes {\rm id}_{\mathfrak B}) \circ \Delta^{(m-1)} \colon {\mathfrak B}\to {\mathfrak B}\otimes\stackrel{m+1}{\dots}\otimes {\mathfrak B}.$$
Then, for any element $v\in V$ with $dv=\sum_i v_i^1\cdots v_i^m$, and any $\gamma \in {\mathfrak B}$ with $\Delta^{(m-1)}\gamma=\sum_j \gamma_{j}^1\otimes \cdots \otimes \gamma_{j}^m$,
$$\widetilde{d} (v\otimes \gamma )=\sum_{i,j} (-1)^{\varepsilon} (v_i^1\otimes \gamma_{j}^1)\cdots (v_i^m\otimes \gamma_{j}^m) + (-1)^{|v|}v\otimes \delta\gamma,$$
where  $\varepsilon$ is the sign provided by the Koszul convention.

Next observe that the CDGA morphism $\Lambda H\colon\Lambda(\Lambda V\otimes {\mathfrak B})\to\bk$ maps $J$ to $0$ so it induces an augmentation, denoted in the same way  for simplicity on the notation,
$$
H\colon(\Lambda (V\otimes {\mathfrak B}),\widetilde d)\to\bk,\qquad H(v\otimes\gamma)=(-1)^{|v|}  \gamma\bigl(h(v)\bigr).
$$
Now consider the injective algebra morphisms,
$$
\Lambda V\stackrel{\varphi}{\hookrightarrow}\Lambda(V\otimes {\mathfrak B}) \stackrel{\psi}{\hookleftarrow} \Lambda V,\quad
\varphi(v)=v\otimes\alpha_0,\quad\psi(v)=v\otimes\sum_{i\ge0} \alpha_i,
$$
and observe that, since $\alpha_0$ and $\sum_{i\geq 0} \alpha_i$ are counits of the coalgebra ${\mathfrak B}$, the morphisms $\varphi $ and $\psi $ commutes with differentials  by the explicit definition of $\widetilde{d}$ given above.

 On the other hand,
as $h$ is a homotopy from $f$ to $g$, given $v\in V^0$, $h(v)=\sum_{j=0}^n\lambda_jt^j$ with $\lambda_0=f(v)$ and $\sum_{j=0}^n\lambda_j=g(v)$. Then, one easily sees that $H\varphi=f$ and $H\psi=g$, that is $\varphi,\psi$ are morphisms of augmented CDGA's. In particular, $\varphi(K_f),\psi(K_g)\subset K_H$, and we have induced CDGA morphisms:
$$
\Lambda V/K_f\stackrel{\overline\varphi}{\longrightarrow}\Lambda(V\otimes {\mathfrak B})/K_H \stackrel{\overline\psi}{\longleftarrow} \Lambda V/K_g.
$$
We prove the theorem by showing that both $\overline\varphi$ and $\overline\psi$ are quasi-isomorphisms.
The linear parts of $\varphi$ and $\psi$ are
$$
(V,d_1)\stackrel{\varphi_1}{\longrightarrow}(V,d_1)\otimes ({\mathfrak B},\delta)\stackrel{\psi_1}{\longleftarrow}(V,d_1),\quad \varphi_1(v)=v\otimes\alpha_0,\quad\psi_1(v)=v\otimes\sum_{i\ge0} \alpha_i.
$$
As  $H_*({\mathfrak B},\delta)=H_0({\mathfrak B},\delta)\cong \bk$, and either $\alpha_0$ or $\sum_{i\ge0} \alpha_i$ are cycles representing the only non vanishing homology class, both maps above are quasi-isomorphisms so are $\varphi$ and $\psi$.

 On the other hand, write ${\mathfrak B}=\bk\alpha_0\oplus C\oplus \delta C$ and observe that, if we define $U$ as a copy of the graded vector space $V\otimes C$, the map
  $$
\eta\colon (\Lambda (V\otimes\bk\alpha_0),\widetilde d)\otimes \Lambda (U\otimes\widetilde d U)\stackrel{\cong}{\longrightarrow} (\Lambda(V\otimes {\mathfrak B}),\widetilde d)
$$
which is the inclusion on $\Lambda (V\otimes\bk\alpha_0)$, and sends $U$ to its isomorphic copy $V\otimes C$, is an isomorphism of CDGA's. Moreover, the ideal $K_{\eta H}$ corresponding to the augmentation  $\eta H$ is the sum of differential ideals,
$$
K_{\eta H}=K_{f'}\otimes \Lambda (U\otimes\widetilde d U)+ \Lambda (V\otimes\bk\alpha_0)\otimes K_u$$
where
$$
\begin{aligned}
&f'\colon(\Lambda (V\otimes\bk\alpha_0)\hookrightarrow  (\Lambda (V\otimes\bk\alpha_0),\widetilde d)\otimes \Lambda (U\otimes\widetilde d U)\stackrel{\eta H}{\longrightarrow}\bk,\\
&u\colon \Lambda (U\otimes\widetilde d U)\hookrightarrow  (\Lambda (V\otimes\bk\alpha_0),\widetilde d)\otimes \Lambda (U\otimes\widetilde d U)\stackrel{\eta H}{\longrightarrow}\bk.
\end{aligned}
$$
Obviously, $\eta$ sends $K_{\eta H}$ isomorphically to $K_H$. Moreover, it sends $K_{f'}\otimes 1$ to the image of $\varphi(K_f)$ while $K_u$ and thus
$\Lambda (V\otimes\bk\alpha_0)\otimes K_u$ is acyclic. Therefore, the inclusion
$\varphi\colon K_f\stackrel{\simeq}{\hookrightarrow}K_H$
 is a quasi-isomorphism. Exactly the same procedure shows that $\psi\colon K_g\stackrel{\simeq}{\hookrightarrow}K_H$ is also a quasi-isomorphism.

 Finally the Five Lemma establishes that $\overline\varphi$ and $\overline\psi$ are quasi-isomorphisms.
 \end{proof}

 We now turn to $\catlinfinito$. Let $L$ be an $L_\infty$ algebra, let $z\in
\MC(L)$ and consider the perturbed $L_\infty$ algebra $(L^z,\{\ell^z_k\})$ defined
in Section 1. Note that the Maurer-Cartan sets of $L$ and $L^z$ are bijective as   $\mc(L^z)=\{a-z,\,a\in\mc(L)\}$ \cite[Lemma 4.8]{ber}.

On the other hand, at the sight of the fact that,  not every reordering of the vertices of a simplicial set gives rise to a homeomorphism, one should not expect that, for homotopic Maurer-Cartan elements $z_0\sim z_1$, the  $L_\infty$ algebras, $L^{z_0}$ and $L^{z_1}$ be isomorphic in general.
We will prove, however, that the geometrical realization of perturbations of an $L_\infty$ algebra by homotopic Maurer-Cartan elements do have the same homotopy type.

Given $L\in \catlinfinito$  and $z\in\mc(L)$, truncate $L^z$ to produce a non-negatively graded  $L_\infty$ algebra
$L^{(z)}$ (the notation coming from classical localization)  whose underlying graded vector space is
$$L^{(z)}_i=\left\lbrace
           \begin{array}{c l}
              L_i^z=L_i& \text{if } i> 0,\\
              \ker \ell_1^{z}& \text{if } i=0,\\
              0 & \text{if } i<0,
           \end{array}
         \right.$$
and with  brackets induced by $\ell^z_k$ for any $k\ge 1$.
We first easily recover a known result.

\begin{theorem}{\em \cite[Corollary 1.2]{ber} \cite[Theorem 1.1]{bufemu2}}  Let $\varphi\colon \cinfinito(L)\to\bk$ be the augmentation corresponding to the Maurer-Cartan element $z$ of a given mild $L_\infty$ algebra. Then $\langle L\rangle_\varphi$ and $\langle L^{(z)}\rangle$ are homotopy equivalent simplicial sets.
\end{theorem}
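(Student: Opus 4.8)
The plan is to transport the statement to $\catcdga$ through the cochain functor $\cinfinito$ and then to invoke the homotopy equivalence recalled just above, the content of \cite[Theorem 6.1]{brown}. Write $\cinfinito(L)=(\Lambda V,d)$ with $V=(sL)^\sharp$. By Proposition \ref{inicial3} the augmentation $\varphi$ is the one, say $\varphi_z$, attached to $z$ under the bijection $\mc(L)\cong\aug\cinfinito(L)$, so $\langle L\rangle_\varphi=\langle\cinfinito(L)\rangle_{\varphi_z}$ and Brown's result yields a homotopy equivalence of simplicial sets $\langle\cinfinito(L)/K_{\varphi_z}\rangle\stackrel{\simeq}{\to}\langle L\rangle_{\varphi_z}$. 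Hence it suffices to produce a CDGA isomorphism $\cinfinito(L)/K_{\varphi_z}\cong\cinfinito(L^{(z)})$.

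First I would use that perturbation by a Maurer-Cartan element is realized, at the level of cochains, by an affine change of coordinates: translating each generator $v\in V$ by the scalar $\varphi_z(v)$, which is nonzero only on the degree-zero part of $V$, extends to a CDGA isomorphism $\cinfinito(L)\cong\cinfinito(L^z)$ that carries $\varphi_z$ to the trivial augmentation $\varepsilon_0$ of $\cinfinito(L^z)$. This is the computation underlying \cite[Corollary 1.2]{ber} and \cite[Theorem 1.1]{bufemu2}, which I would either cite or reproduce in the present notation. Under this isomorphism $K_{\varphi_z}$ corresponds to $K_{\varepsilon_0}$ which, since $\varepsilon_0$ is trivial, is simply the differential ideal of $\cinfinito(L^z)$ generated by the non-positively graded generators $V^{\le0}$; so the remaining task is to identify $\cinfinito(L^z)/K_{\varepsilon_0}$ with $\cinfinito(L^{(z)})$.

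I would obtain this last isomorphism from the structural inclusion. The generalized Jacobi identity for $n=2$ gives $\ell_2^z(\ker\ell_1^z,\ker\ell_1^z)\subseteq\ker\ell_1^z$, so $L^{(z)}$ is a genuine, and still mild, sub-$L_\infty$-algebra of $L^z$; the inclusion $\iota\colon L^{(z)}\hookrightarrow L^z$ is strict, hence a morphism in $\catlinfinito$, and by Definition \ref{cinfinitomorfismo} the induced map $\cinfinito(\iota)\colon\cinfinito(L^z)\to\cinfinito(L^{(z)})$ is just the algebra homomorphism extending the surjection $(sL^z)^\sharp\twoheadrightarrow(sL^{(z)})^\sharp$ dual to $sL^{(z)}\hookrightarrow sL^z$. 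Thus $\cinfinito(\iota)$ is surjective with kernel the ideal generated by the forms on $sL^z$ that vanish on $sL^{(z)}$, namely $V^{\le0}$ together with the annihilator $(\ker\ell_1^z)^\perp$ of $\ker\ell_1^z$ inside the degree-one piece $V^1$. Now a degree count shows that for a degree-zero generator $v$ the only part of $d^zv$ not already in the ideal generated by $V^{\le0}$ is its linear term $d^z_1v\in V^1$, and these terms span $(\ker\ell_1^z)^\perp$; since $d^zv\in K_{\varepsilon_0}$ it follows that $(\ker\ell_1^z)^\perp\subseteq K_{\varepsilon_0}$, so $\ker\cinfinito(\iota)\subseteq K_{\varepsilon_0}$, the reverse inclusion being clear because $\cinfinito(\iota)$ is a CDGA morphism killing $V^{\le0}$. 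Therefore $\cinfinito(L^z)/K_{\varepsilon_0}\cong\cinfinito(L^{(z)})$, and chaining the three identifications gives $\langle L\rangle_\varphi\simeq\langle\cinfinito(L^{(z)})\rangle=\langle L^{(z)}\rangle$. I expect the main obstacle to be the change-of-coordinates step --- checking, with the Koszul signs, that the affine translation conjugates $d$ into the perturbed differential $d^z$ and moves $\varphi_z$ to $\varepsilon_0$ --- though this is exactly the point already secured in \cite{ber,bufemu2}.
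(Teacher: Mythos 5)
Your proposal is correct and follows essentially the same route as the paper: both reduce to Brown's homotopy equivalence $\langle\Lambda V/K_\varphi\rangle\simeq\langle\Lambda V\rangle_\varphi$ and then identify $\cinfinito(L)/K_\varphi$ with $\cinfinito(L^{(z)})$. Your two-step factorization through the translation isomorphism $\cinfinito(L)\cong\cinfinito(L^z)$ and the strict inclusion $L^{(z)}\hookrightarrow L^z$ is precisely the ``straightforward computation'' the paper leaves implicit, spelled out in welcome detail.
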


\begin{proof}
First, observe that, for a given augmentation $f\colon(\Lambda V,d)\to \bk$ of a free CDGA, the quotient  $(\Lambda V, d )/K_f$ is again a  free CDGA $(\Lambda
(\overline{V}^1\oplus V^{\geq 2}),d_f)$ in which $\overline{V}^1$ is the coker of the map $\overline d\colon V^0\to  V^1$ resulting by applying the differential $d$ and then projecting over the ideal generated by $V^{< 0}$ and     $\{ v -f(v ),\, v \in V^0 \}$.

In particular, if $(\Lambda V,d)=\cinfinito(L)$, we write,
$$
\cinfinito(L)/K_{\varphi}=(\Lambda (\overline{V}^1\oplus
V^{\geq 2}),d_{\varphi}) .
$$
A straightforward computation shows that
$
(\Lambda (\overline{V}^1\oplus
V^{\geq 2}),d_{\varphi}) $ is precisely ${C}^\infty(L^{(z)})$. Then,
$$\langle L^{(z)}\rangle=\langle {C}^\infty(L^{(z)})\rangle=\langle\cinfinito(L)/K_{\varphi}\rangle\simeq \langle\cinfinito(L)\rangle_{\varphi}=\langle L\rangle_\varphi.$$
\end{proof}

\begin{remark}
With the notation in Theorem above, we may choose the isomorphism of (non differential!) algebras
$$
\gamma_f\colon\Lambda V\stackrel{\cong}{\longrightarrow}\Lambda V,\qquad\gamma_f(v)=v-f(v),\quad v\in V,
$$
and consider the differential $d_f=\gamma_fd\gamma_f^{-1}$ so that $\gamma_f\colon (\Lambda V,d)\stackrel{\cong}{\to}(\Lambda V,d_f)$ becomes a CDGA isomorphism. Geometrically, this only means that we are considering a different base point. Indeed,   $d_f$  has no scalar terms and thus it defines an $L_\infty$ structure on $L$ if and only if $f$ is an augmentation. We also note that the exponential does not preserves augmentations. For instance, consider the non base point of $S^0$, i.e., the augmentation of the CDGA $\varepsilon\colon (\Lambda (x,y),d)\to \bk$, $\varepsilon(x)=1$. Recall that $x$ is a cycle of degree zero and $dy=\frac{1}{2}(x^2-x)$. Again, one can consider the non differential automorphism
$$
\gamma_{e^\varepsilon}\colon\Lambda (x,y)\stackrel{\cong}{\longrightarrow}\Lambda (x,y), \quad \gamma_{e^\varepsilon}(x)=x+\sum_{n\ge 1}\frac{1}{n!}\varepsilon^n(x)=x+e.
$$
Besides the obvious restriction of working with a field $\bk$ containing the real number $e$, one sees that in $(\Lambda (x,y),d_{e^\varepsilon})$, $d_{e^\varepsilon}y$ contains the scalar $\frac{1}{2}(e^2-e)$ and thus, this CDGA is not the chain algebra of any $L_\infty$ algebra.
\end{remark}

An immediate consequence of theorems \ref{homotopia} and  \ref{invarianzadga} reads as follows.

\begin{corollary} \label{invarianzalie} Let $z_0,z_1\in \mc(L)$ with $z_0\sim z_1$. Then, the  simplicial sets $\langle L^{(z_0)}\rangle$ and $\langle L^{(z_1)}\rangle$ are homotopy equivalent.
\end{corollary}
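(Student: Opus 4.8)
The plan is to transport everything through the cochain functor $\cinfinito$ and reduce the statement to the already-established invariance result for free CDGA's, Theorem \ref{invarianzadga}. The point is that the three ingredients — Theorem \ref{homotopia} (homotopic Maurer--Cartan elements have homotopic augmentations), Theorem \ref{invarianzadga} (homotopic augmentations give homotopy equivalent localizations), and the localization theorem recalled just above this corollary (identifying $\langle L\rangle_{\varphi_z}$ with $\langle L^{(z)}\rangle$) — already do all the work, so the proof is a chaining of three homotopy equivalences.

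Concretely, first I would record that $\cinfinito(L)=(\Lambda V,d)$ is free as a commutative graded algebra (Definition \ref{cocadena}), so Theorem \ref{invarianzadga} applies to it; and that $L\in\catlinfinito$, whence $L^{z_i}$ is mild (as noted after Definition \ref{mild}) and its truncation $L^{(z_i)}$ is again a mild, non-negatively graded $L_\infty$ algebra, so that $\langle L^{(z_i)}\rangle$ is defined. Next, from $z_0\sim z_1$ in $\mc(L)$, Theorem \ref{homotopia} gives at once that the associated augmentations $\varphi_{z_0},\varphi_{z_1}\colon\cinfinito(L)\to\bk$ of Proposition \ref{inicial3} are homotopic CDGA morphisms. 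Applying Theorem \ref{invarianzadga} with $f=\varphi_{z_0}$, $g=\varphi_{z_1}$ yields that $\cinfinito(L)/K_{\varphi_{z_0}}$ and $\cinfinito(L)/K_{\varphi_{z_1}}$ have the same homotopy type and, being positively graded, that
\[
\langle L\rangle_{\varphi_{z_0}}=\langle\cinfinito(L)\rangle_{\varphi_{z_0}}\;\simeq\;\langle\cinfinito(L)\rangle_{\varphi_{z_1}}=\langle L\rangle_{\varphi_{z_1}}.
\]
Finally I would invoke the localization theorem stated just before this corollary: the CDGA identification $\cinfinito(L)/K_{\varphi_{z_i}}\cong\cinfinito(L^{(z_i)})$ gives $\langle L^{(z_i)}\rangle\simeq\langle L\rangle_{\varphi_{z_i}}$ for $i=0,1$, and composing the three equivalences produces $\langle L^{(z_0)}\rangle\simeq\langle L^{(z_1)}\rangle$.

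I do not expect a genuine obstacle here; the only things that need a line of care are bookkeeping: checking that the correspondence $z_i\leftrightarrow\varphi_{z_i}$ used in Theorem \ref{homotopia} is literally the same augmentation $\varphi_{z_i}$ fed into the localization theorem (both being the one attached to $z_i$ via Proposition \ref{inicial3}), and confirming the mildness of the truncated $L^{(z_i)}$ so that all realizations in the chain are legitimately defined. The substantive analytic input was already supplied by the proof of Theorem \ref{invarianzadga}.
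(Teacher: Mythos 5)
Your proposal is correct and follows exactly the paper's own argument: Theorem \ref{homotopia} to pass from $z_0\sim z_1$ to $\varphi_{z_0}\sim\varphi_{z_1}$, Theorem \ref{invarianzadga} to get $\langle L\rangle_{\varphi_{z_0}}\simeq\langle L\rangle_{\varphi_{z_1}}$, and the preceding localization theorem to identify these with $\langle L^{(z_0)}\rangle$ and $\langle L^{(z_1)}\rangle$. The extra bookkeeping you flag (consistency of the correspondence $z\leftrightarrow\varphi_z$ and mildness of the truncations) is sound but is left implicit in the paper.
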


\begin{proof}  If $\varphi_{z_0},\varphi_{z_0}\colon\cinfinito(L)\to\bk$ are the augmentations associated to $z_0$ and $z_1$ respectively, Theorem \ref{homotopia} asserts that $\varphi_{z_0}\sim\varphi_{z_0}$. Then, by Theorem \ref{invarianzadga}, $\langle L\rangle_{\varphi_{z_0}}\simeq \langle L\rangle_{\varphi_{z_1}}$ which, in view of the theorem above is equivalent to stating that $\langle L^{(z_0)}\rangle\simeq\langle L^{(z_1)}\rangle$.
\end{proof}

We may summarize the results in this section by the following.

\begin{theorem}\label{realizacion}
Let $L$ be a mild $L_\infty$ algebra. Then,
$$
\langle L\rangle\simeq \,\stackrel{\cdot}{\cup}_{z\in \widetilde\mc(L)}\langle L^{(z)}\rangle.\eqno{\square}$$
\end{theorem}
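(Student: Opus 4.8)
The plan is to identify the set of path components of $\langle L\rangle$ with $\widetilde\mc(L)$, and then to recognize each individual component as the realization of the corresponding localized $L_\infty$ algebra. First I would recall the standard fact \cite[\S8]{fehatho} that for any CDGA $A$ the Sullivan realization $\langle A\rangle$ is a Kan complex. Consequently every path component of $\langle L\rangle=\langle\cinfinito(L)\rangle$ contains a $0$-simplex, and two $0$-simplices lie in the same component precisely when they are joined by a single $1$-simplex. Now a $0$-simplex of $\langle\cinfinito(L)\rangle$ is by definition an augmentation $\cinfinito(L)\to\bk$, while a $1$-simplex is a CDGA morphism $\cinfinito(L)\to(A_{PL})_1=\Lambda(t,dt)$; hence a $1$-simplex joining two vertices $f_0$ and $f_1$ is exactly a homotopy of augmentations in the sense of Section~4. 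Therefore $\pi_0\langle L\rangle$ is the quotient of $\aug\cinfinito(L)$ by the homotopy relation on augmentations.

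Next I would invoke Proposition~\ref{inicial3}, which provides a natural bijection $\mc(L)\cong\aug\cinfinito(L)$, $z\leftrightarrow\varphi_z$, together with Theorem~\ref{homotopia}, which states that $z_0\sim z_1$ in $\mc(L)$ if and only if $\varphi_{z_0}\sim\varphi_{z_1}$ as augmentations. Combining both with the previous paragraph yields a bijection $\pi_0\langle L\rangle\cong\mc(L)/{\sim}=\widetilde\mc(L)$. In particular $\langle L\rangle$ is the disjoint union of its path components, indexed by $\widetilde\mc(L)$:
$$
\langle L\rangle=\stackrel{\cdot}{\cup}_{[z]\in\widetilde\mc(L)}\langle L\rangle_{\varphi_z}.
$$

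It then remains to identify each summand. For a fixed representative $z$ of a homotopy class, the theorem recalled above (\cite[Corollary~1.2]{ber}, \cite[Theorem~1.1]{bufemu2}) furnishes a homotopy equivalence $\langle L\rangle_{\varphi_z}\simeq\langle L^{(z)}\rangle$, and Corollary~\ref{invarianzalie} guarantees that the right-hand side depends, up to homotopy equivalence, only on the class $[z]$, so that the indexing by $\widetilde\mc(L)$ is legitimate. Splicing these equivalences into the decomposition above produces the asserted homotopy equivalence
$$
\langle L\rangle\simeq\stackrel{\cdot}{\cup}_{[z]\in\widetilde\mc(L)}\langle L^{(z)}\rangle.
$$

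I expect the only genuine subtlety to lie in the first paragraph: one must make sure that $\pi_0$ of the realization really is computed by homotopy classes of vertices. This rests on the Kan property of $\langle A\rangle$ --- so that every component meets the $0$-skeleton and the relation ``joined by a $1$-simplex'' is already an equivalence relation on vertices --- and on the elementary but crucial observation that a $1$-simplex between two augmentations is literally an elementary homotopy through $\Lambda(t,dt)$, which is why Theorem~\ref{homotopia} applies verbatim. Everything else is a formal assembly of Proposition~\ref{inicial3}, Theorem~\ref{homotopia}, Corollary~\ref{invarianzalie} and the localization theorem.
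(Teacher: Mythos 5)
Your proposal is correct and is essentially the paper's own argument: the theorem is stated there as a summary (with no written proof) of exactly the ingredients you assemble, namely the bijection $\mc(L)\cong\aug\cinfinito(L)$ of Proposition \ref{inicial3}, the equivalence of homotopy relations in Theorem \ref{homotopia}, the identification $\langle L\rangle_{\varphi_z}\simeq\langle L^{(z)}\rangle$, and the invariance of Corollary \ref{invarianzalie}. Your explicit justification of $\pi_0\langle A\rangle$ via the Kan property and the observation that a $1$-simplex of $\langle\cinfinito(L)\rangle$ is literally a homotopy of augmentations is a welcome (and correct) elaboration of what the paper leaves implicit.
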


\begin{example}\label{ejemplo}
 The realization of the Lawrence-Sullivan construction  has the homotopy type of $S^0$. Indeed, $\lasu$ has two non homotopic Maurer-Cartan elements $\{0,a\}$ as $a$ and $b$ are gauge homotopic via $x$ (see Remark \ref{gauge}). In both cases $\lasu^{(0)}=\lasu^{(a)}=0$. Hence,
$\langle \lasu\rangle\simeq\langle 0\rangle\stackrel{\cdot}{\cup}\langle 0\rangle\simeq S^0$. On the other hand, if we consider the free DGL $\lib(b)$ generated by the Maurer-Cartan element $b$, the same computation shows that also $\langle \lib(b)\rangle\simeq S^0$. Nevertheless, from a functorial point of view, $\lasu$ is known to be a cylinder of $\lib(b)$ \cite[\S3]{bumu}. As in the based homotopy category, the cylinder of $S^0$ is  $S^0\wedge
I=I^+$  the disjoint union of the interval with an exterior point, it is more accurate to state that
 $$
 \langle\lasu\rangle\simeq I^+.
 $$

 On the other hand, the realization of the inclusion $k\colon \lib(b)\hookrightarrow\lasu$ is, up to homotopy, the based map $S^0\to I^+$ which sends the non base point of $S^0$ to any of the endpoints of the interval. The cofibre of this map is the interval $I$ while the algebraic cofibre of $k$ is  $
\lasu_I=(\widehat{\mathbb{L}}(a,x),\partial)$ where $a$ is a Maurer-Cartan element and $\dlie(x)=-\sum_{i\ge 0}\frac{B_i}{i!}{\rm ad}^i_x(a)$. Observe that $\langle\lasu_I\rangle$ is contractible, as its two Maurer-Cartan elements $0$ and $a$ are homotopic, and thus, we may think of $\lasu_I$ as a model for the interval. This model has been used in \cite[\S4]{bumu6} to find a natural $L_\infty$ model of the based path space.

\end{example}

\section{Algebraic models of  non-connected spaces}

As a result of past sections, we are able to develop here a  procedure  to obtain DGL and CDGA models of non-connected spaces. The initial data is a non necessarily path connected space whose path connected components are nilpotent spaces of the homotopy type of finite type  CW-complexes. In this section all path connected spaces will be of this kind. Equivalently, we may start with  a family of CDGA's or DGL's, modeling in the Sullivan or Quillen sense respectively, each of the path components of the given space. Our  assumptions  let us choose either Quillen models  or  the Quillen functor of CDGA models of the components of the spaces, as they are homotopy equivalent \cite{ma}. We then glue them together to obtain  a  DGL  whose decomposition, following localization at each homotopy class of Maurer-Cartan elements, gives precisely, and up to homotopy, the given  DGL family. In other words, its realization  has the homotopy type of the rationalization of the space under consideration.

Given $L,M\in\catdgl$ we denote by $L*M$ its coproduct. Recall that, given free presentations $L=\lib(U)/I$, $M=\lib(V)/J$, then $L*M=\lib(U\oplus V)/\langle I,J\rangle$.

\begin{lemma}\label{lielema} Let $L$ be a DGL and let $\lib(u)$ be the DGL generated by the Maurer-Cartan element $u$. Then,  $(\lib(u)*L)^{u}$ is an acyclic DGL.
\end{lemma}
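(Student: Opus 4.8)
The plan is to pass to universal enveloping algebras, where the perturbation by $u$ becomes an \emph{inner} derivation whose left‑ and right‑multiplication parts are transverse to the free‑product decomposition, so that $L$ contributes nothing to the final homological computation.

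Since $u$ is a Maurer--Cartan element of $\lib(u)$ and the canonical inclusion $\lib(u)\hookrightarrow\lib(u)*L$ is a DGL morphism, $u\in\mc(\lib(u)*L)$; by the perturbation formulae of Section 1 (recall $\ell_k=0$ for $k\ge 3$ here), $(\lib(u)*L)^{u}$ is the graded Lie algebra $\lib(u)*L$ with differential $D=\partial+\ad_u$. I would first record the standard characteristic‑zero fact that a DGL $M$ is acyclic if and only if $H_*(UM)=\bk$: the Poincaré--Birkhoff--Witt symmetrization map is a chain isomorphism $\Lambda M\stackrel{\cong}{\to}UM$ of the underlying complexes, where $\Lambda M$ denotes the free graded‑commutative algebra on the complex $M$; hence $H_*(UM)\cong\Lambda H_*(M)$, which equals $\bk$ exactly when $H_*(M)=0$. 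Thus it suffices to prove $H_*\big(U(\lib(u)*L)^{u}\big)=\bk$.

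Because $U$ preserves coproducts, $U\big((\lib(u)*L)^{u}\big)=\big(U\lib(u)\amalg UL,\ d+\ad_u\big)$, the free product of augmented differential graded algebras with the inner derivation $\ad_u$, where $\ad_u(w)=uw-(-1)^{|w|}wu$. A direct PBW computation identifies $U\lib(u)$ with the polynomial algebra $(\bk[u],d)$, $|u|=-1$, $du=-u^{2}$ (using $[u,u]=2u^{2}$ inside $U\lib(u)$). Decompose $\bk[u]\amalg UL=\bigoplus_{n\ge 0}C_n$, where $C_n$ is spanned by the reduced alternating words containing exactly $n$ letters from $\overline{UL}$ (so $C_0=\bk[u]$). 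The derivation $d$ preserves augmentation ideals, and $\ad_u$ only prepends or appends a letter of $\overline{\bk[u]}$, so both preserve this decomposition and $D=\bigoplus_n D|_{C_n}$. For $n\ge 1$ there is a chain isomorphism
$$
C_n\;\cong\;\bk[u]\otimes\overline{UL}\otimes\bk[u]\otimes\cdots\otimes\overline{UL}\otimes\bk[u]
$$
($n$ factors $\overline{UL}$ and $n+1$ factors $\bk[u]$ filling the gaps, since $\bk[u]=\bk\oplus\overline{\bk[u]}$ records ``empty gap'' versus ``a block''), with the tensor‑product differential in which the \emph{leftmost} $\bk[u]$ carries $d+\ell_u$ ($\ell_u=$ left multiplication by $u$) and the rightmost carries $d$ plus signed right multiplication by $u$. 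A one‑line computation shows $(\bk[u],d+\ell_u)$ is acyclic: $1\mapsto u$, $u^{2k}\mapsto u^{2k+1}$ and $u^{2k+1}\mapsto 0$, so kernel and image both equal the span of the odd powers. Hence $C_n$ is acyclic for every $n\ge1$ by the Künneth theorem, while $C_0=(\bk[u],d+\ad_u)$ has $H_*(C_0)=\bk$. Therefore $H_*\big(U(\lib(u)*L)^{u}\big)=\bk$, and $(\lib(u)*L)^{u}$ is acyclic.

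I expect the main obstacle to be the structural step: recognizing that applying $U$ turns the perturbing bracket into an inner derivation, and that the left/right‑multiplication pieces of that derivation respect the grading of $U\lib(u)\amalg UL$ by the number of $\overline{UL}$‑letters — this is precisely what decouples $L$ and reduces acyclicity to the elementary complex $(\bk[u],d+\ell_u)$. A secondary point requiring care is the characteristic‑zero PBW chain isomorphism $\Lambda M\cong UM$ used to transport acyclicity from $UM$ back to $M$.
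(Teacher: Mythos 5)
Your proof is correct, but it follows a genuinely different route from the paper's. The paper stays entirely in the Lie world: it filters $(\lib(u)*L)^{u}$ by the decreasing sequence of differential ideals $I_p=\im{\ad_u^p}|_{L}$, obtains a spectral sequence with $E^1=\lib(u)*H(L)$ (where $d^1$ is $\partial_u$ on $\lib(u)$ and $\ad_u$ on $H(L)$), and checks that $H(E^1,d^1)=0$. You instead pass to $UM$ via the characteristic-zero PBW chain isomorphism $\Lambda M\cong UM$, identify $U\bigl((\lib(u)*L)^{u}\bigr)$ with the free product $\bk[u]\amalg UL$ carrying the inner derivation $d+\ad_u$, and exploit the decomposition by the number of $\overline{UL}$-letters, reducing everything to the elementary acyclic complex $(\bk[u],d+\ell_u)$. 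The trade-off is instructive: your decomposition $\bigoplus_n C_n$ is an honest direct sum of subcomplexes, so homology commutes with it and there is no convergence issue to address, whereas the paper's decreasing (and a priori infinite) filtration requires a convergence argument that the published proof does not spell out; on the other hand, you pay for this with the PBW input and with the Koszul-sign bookkeeping needed to see $C_n$ as a tensor product of complexes with $(\bk[u],d+\ell_u)$ as one factor (which you handle correctly: $(d+\ell_u)^2=\ell_{du}+\ell_{u^2}=0$ and kernel equals image equals the span of the odd powers of $u$). Both arguments isolate the same phenomenon — that the perturbation by $u$ only touches the $\lib(u)$-direction — but they localize it differently: the paper in the $E^1$-page, you in the outermost tensor factor of each $C_n$.
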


\begin{proof} Observe that $(\lib(u)*L)^{u}=(\lib(u)*L,\partial_u)$ in which $\partial_u(u)=\frac{1}{2}[u,u]$ and $\partial_u(x)=\partial(x)+[u,x]$ for $x\in L$ with $\partial$ the differential in $L$. Let
$$
(\lib(u)*L)^{u}=I_0\supset\dots\supset I_p\supset I_{p+1}\supset\dots
$$
be the decreasing sequence of differential ideals in which,
for each $p\ge 0$,  $I_p=\im {\ad_u^p}_{|_L}$, i.e., it  is generated by
$$\{\bigl[\underbrace{u,[u,\dots ,[u}_{p}, x]\bigr]\dots \bigr],\, x\in L\}.$$
The zero term of  the resulting spectral sequence  is
$
(E^0,d^0)=(\lib(u)*L,0*\partial)$ while $E^1=\lib(u)*H(L)$ with $d^1=\partial_u$ on $\lib(u)$ and $d^1=\ad_u$ on $H(L)$.

 A straightforward computation shows that $H(E^1,d^1)=0$ and thus $(\lib(u)*L)^{u}$ is acyclic.
\end{proof}

 \begin{proposition}\label{xpunteado} Let $L$ be a non-negatively graded DGL model of a path connected space $X$. then $\lib(u)*L$ is a model of $X^+=X\stackrel{\cdot}{\cup}{*}$, the disjoint union of $X$ and an exterior point.
 \end{proposition}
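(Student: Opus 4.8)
The plan is to apply Theorem \ref{realizacion} to the DGL $M:=\lib(u)*L$, which reduces the statement to identifying the set $\widetilde\mc(M)$ of homotopy classes of Maurer--Cartan elements and, for each class, the homotopy type of $\langle M^{(z)}\rangle$. Two DGL retractions do most of the bookkeeping: the map $p\colon M\to\lib(u)$ which is the identity on $\lib(u)$ and annihilates $L$, and the map $q\colon M\to L$ which is the identity on $L$ and sends $u$ to $0$ (both are DGL morphisms: $q$ because $q[u,u]=0=\partial(qu)$, and $p$ because it kills $\partial\ell\in L$).

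First I determine $\widetilde\mc(M)$. Since $L$ is non-negatively graded, $L_{-1}=0$, hence $\mc(L)=\{0\}$; also $\mc(\lib(u))=\{0,u\}$ (a direct computation, cf. the discussion opening Section~2). Given $z\in\mc(M)$, naturality of $\mc$ forces $q(z)\in\mc(L)=\{0\}$ and $p(z)\in\{0,u\}$, so with $\zeta_0=p(z)$ one has $z-\zeta_0\in\ker p\cap\ker q$, which in degree $-1$ consists of iterated brackets containing exactly one occurrence of $u$ with the remaining entries in $L_0$. An inductive argument along the filtration of $M$ by powers of $\ad_u$ — the filtration used in the proof of Lemma \ref{lielema} — shows such a $z$ can be carried to $\zeta_0$ by a gauge transformation (the gauge action of $M_0$ is available once $L$ is taken to be a free model, cf. Remark \ref{gauge}); hence $\widetilde\mc(M)=\{[0],[u]\}$. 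These classes are distinct: $\widetilde\mc(p)$ sends $[0]\mapsto[0]$, $[u]\mapsto[u]$, and $0\not\sim u$ in $\lib(u)$ since $\langle\lib(u)\rangle\simeq S^0$ is disconnected.

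Now I compute the two pieces. For the class of $u$: by Lemma \ref{lielema} the DGL $M^u$ is acyclic, and a short diagram chase on the definition of $M^{(u)}$ (degree $0$ is replaced by $\ker\ell_1^u$, negative degrees discarded) shows truncation preserves acyclicity; thus $M^{(u)}$ is a non-negatively graded acyclic DGL with no generators in cochain degree $0$, so its cochain algebra is quasi-isomorphic to $\bk$ and $\langle M^{(u)}\rangle=\langle\cinfinito(M^{(u)})\rangle\simeq\langle\bk\rangle$ is a point. For the class of $0$: perturbation by $0$ is the identity, so $M^{(0)}$ is the non-negative truncation of $M$. Taking $L$ cofibrant (free), $\lib(u)$ is an acyclic cofibrant DGL, so $0\to\lib(u)$ is a weak equivalence of cofibrant objects and $q\colon M=\lib(u)*L\to L$ is a quasi-isomorphism; it respects the Maurer--Cartan element $0$, and the induced map $M^{(0)}\to L^{(0)}=L$ stays a quasi-isomorphism of non-negatively graded DGL's (truncation of a quasi-isomorphism, once the negative part and the $0$-th term are controlled as above). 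Passing to cochains and realizing — classical for non-negatively graded finite type objects — gives $\langle M^{(0)}\rangle\simeq\langle L\rangle$, which by hypothesis has the homotopy type of $X$. Assembling the three pieces, Theorem \ref{realizacion} yields $\langle\lib(u)*L\rangle\simeq\langle L\rangle\stackrel{\cdot}{\cup}\{*\}$, i.e. $\lib(u)*L$ is a model of $X^+=X\stackrel{\cdot}{\cup}{*}$.

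The main obstacle is the exhaustiveness half of the Maurer--Cartan computation: that $[0]$ and $[u]$ are the \emph{only} homotopy classes, which is the algebraic shadow of $X^+$ having exactly two path components. Distinctness is immediate from $p$, but excluding ``exotic'' Maurer--Cartan elements genuinely requires controlling the gauge orbits in $\lib(u)*L$, hence local nilpotence of $\ad$, which is the reason one must pass to a free model of $L$ (or to a completion). A secondary nuisance to watch is finiteness: when $\pi_1(X)\neq0$ the DGL $\lib(u)*L$ need not be of finite type, so the argument must be arranged so that the finiteness hypotheses of Sections~1--2 are invoked only for $L$ itself and for the truncations $M^{(z)}$, which are quasi-isomorphic to finite type DGL's; for simply connected $X$ this issue disappears.
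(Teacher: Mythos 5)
Your proposal follows essentially the same route as the paper's: reduce to Theorem \ref{realizacion}, use Lemma \ref{lielema} to get that $M^{u}$, hence $M^{(u)}$, is acyclic, and identify $M^{(0)}$ with $L$. Two remarks on where you diverge. For $M^{(0)}$ the paper argues via the inclusion $L\hookrightarrow M^{(0)}$, using that homology preserves coproducts and that $\lib(u)$ is acyclic, so $H(\lib(u)*L)\cong H(L)$; your route through the projection $q\colon M\to L$ reaches the same conclusion but needs the auxiliary fact that free product with an acyclic cofibrant DGL preserves quasi-isomorphisms, which is more machinery than the direct homology computation. On the Maurer--Cartan count: the paper simply asserts $\widetilde\mc(M)=\{0,u\}$, so you are being more careful than the source here --- but your gauge argument as written does not go through. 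The gauge action of $M_0$ requires local nilpotence of $\ad_x$ (or completeness), and this fails in the uncompleted free product $\lib(u)*L$ as soon as $L_0\neq 0$; replacing $L$ by a free model makes this worse, not better, and only a completion would help. Moreover, degree $-1$ elements of $\ker p\cap\ker q$ need not contain exactly one occurrence of $u$: for instance $\bigl[[u,x],u\bigr]$ with $x\in L_1$ lies there, so the inductive bookkeeping along the $\ad_u$-filtration needs adjustment. A cleaner way to get exhaustiveness is to argue directly on the Maurer--Cartan equation graded by the number of occurrences of $u$, after first using $p$ to pin the coefficient of $u$ to $0$ or $1$.
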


 \begin{proof}
 Since $L$ is non-negatively graded, $\widetilde\mc(\lib(u)*L)=\{0,u\}$. Observe that, as homology preserves coproducts and $\lib(u)$ is acyclic, $H(\lib(u)*L)\cong H(\lib(u))*H(L)\cong H(L)$. In particular $H\bigl((\lib(u)*L)^{(0)})=H_{\ge 0}(\lib(u)*L)\cong H(L)$ and the natural inclusion $L\stackrel{\simeq}{\hookrightarrow} (\lib(u)*L)^{(0)}$ is a quasi-isomorphism between non-negatively graded DGL's. This shows that $(\lib(u)*L)^{(0)}$ is a model of $X$. On the other hand, by Lemma \ref{lielema} above,  $(\lib(u)*L)^{u}$ and thus $(\lib(u)*L)^{(u)}$ is acyclic. Theorem \ref{realizacion} finishes the proof.
  \end{proof}

 Observe that $\lib(u)*L$  mimics in the algebraic setting the wedge of $S^0$ (see example \ref{ejemplo}) and $X$ which yields $X^+$. However, in these constructions, the  base points are $0\in \lib(u)*L$ and the one in $X$ for $X^+$. We need to rearrange this if we want to describe  in the DGL setting the disjoint union $X\stackrel{\cdot}{\cup}Y$ as the wedge $X^+\vee Y$. Here, the base point of $X^+$ is now the exterior point. For that, we first observe that
perturbing a DGL does not affect the homotopy type of its geometric realization. It only rearranges the basepoints of its components.

\begin{proposition}
Let $L$ be a DGL  and $z\in\mc(L)$. Then,
$$
\langle L^z\rangle\simeq\langle L\rangle.
$$
\end{proposition}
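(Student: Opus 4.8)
Recall that the realization functor factors as the composite of $\cinfinito\colon\catlinfinito\to\catcdga$ with the Sullivan realization $\langle-\rangle\colon\catcdga\to\catsimpset$, and that the latter, being a functor, carries isomorphisms to isomorphisms; also $L^z$ is again mild (and of finite type), so $\cinfinito(L^z)$ is defined. Hence it is enough to produce an isomorphism in $\catcdga$,
$$\cinfinito(L^z)\ \cong\ \cinfinito(L),$$
because then $\langle L^z\rangle=\langle\cinfinito(L^z)\rangle$ and $\langle L\rangle=\langle\cinfinito(L)\rangle$ are isomorphic, hence a fortiori homotopy equivalent, simplicial sets.

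Write $\cinfinito(L)=(\Lambda V,d)$. Since $L^z$ has the same underlying graded vector space as $L$, the cochain algebra $\cinfinito(L^z)$ is $(\Lambda V,d^z)$ for the \emph{same} free graded commutative algebra, only with a perturbed differential. As $L$ is a DGL, $\ell_k=0$ for $k\ge 3$ and the perturbation formula degenerates to $\ell_1^z=\partial+[z,-]$, $\ell_2^z=[\,,\,]$, $\ell_k^z=0$ for $k\ge 3$; so $L^z$ is again the DGL $(L,\partial+[z,-])$ with unchanged bracket. Dualizing through $(2)$ of Section $1$, we get $d^z=d+d'$, where $d=d_1+d_2$ and $d'$ is the degree-$1$ locally finite derivation of $\Lambda V$ which vanishes off $V^0$ and, on $V^0$, is obtained by contracting the quadratic part of $d$ against $sz$: up to the Koszul sign, $\langle d'v;sx\rangle=\pm\langle d_2 v;\,sz\wedge sx\rangle$.

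Now let $\varphi_z\colon\cinfinito(L)\to\bk$ be the augmentation corresponding to $z$ under the bijection $\mc(L)\cong\aug\cinfinito(L)$ of Proposition \ref{inicial3}, and let
$$\gamma_{\varphi_z}\colon\Lambda V\ \xrightarrow{\ \cong\ }\ \Lambda V,\qquad \gamma_{\varphi_z}(v)=v-\varphi_z(v),$$
be the ``change of basepoint'' algebra automorphism of the Remark following Theorem \ref{invarianzadga}; concretely $\gamma_{\varphi_z}$ is the identity on $V^{\ne 0}$ and the scalar shift $v\mapsto v-\langle v;sz\rangle$ on $V^0$, the evaluation $\langle v;sz\rangle$ being a scalar since $sz$ has degree $0$. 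The claim, which finishes the proof once we apply $\langle-\rangle$ to it, is that $\gamma_{\varphi_z}$ intertwines the differentials,
$$\gamma_{\varphi_z}\circ d\ =\ d^z\circ\gamma_{\varphi_z},$$
so that $\gamma_{\varphi_z}\colon\cinfinito(L)\to\cinfinito(L^z)$ is an isomorphism in $\catcdga$. This is verified on the generators $v\in V$ using formula $(2)$ and bilinearity of the pairing: expanding $dv$ in $\Lambda V$ and replacing each degree-zero generator that occurs by itself minus $\langle-;sz\rangle$ produces, as cross terms coming from the quadratic part $d_2$, exactly the extra contribution $d'v$ recorded above, while $d^z\gamma_{\varphi_z}(v)=d^z v=(d+d')v$. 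Applying the Sullivan realization functor to the isomorphism $\gamma_{\varphi_z}$ yields an isomorphism of simplicial sets between $\langle L^z\rangle$ and $\langle L\rangle$.

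The one non-formal point is this intertwining identity, which is precisely the dual, at the level of cochain algebras, of the familiar fact that perturbing an $L_\infty$ structure by a Maurer--Cartan element amounts to conjugating its cochain differential by a change of basepoint; its proof is a direct, if slightly tedious, sign bookkeeping with the dualities of Section $1$. Alternatively, one may reach the same conclusion through Theorem \ref{realizacion}, checking that $w\mapsto w-z$ is a bijection $\mc(L)\to\mc(L^z)$ which is compatible with the homotopy relation and satisfies $(L^z)^{w-z}=L^w$, hence $(L^z)^{(w-z)}=L^{(w)}$, so that the two disjoint unions of Theorem \ref{realizacion} agree term by term.
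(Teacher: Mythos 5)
Your argument is correct, and its primary route is genuinely different from the paper's. The paper deduces the statement from Theorem \ref{realizacion}: it records that $\mc(L^z)=\{a-z:a\in\mc(L)\}$ and that, for a DGL, $(L^z)^{a-z}=L^a$, so the two disjoint unions of localized realizations agree term by term --- this is exactly the ``alternative'' you sketch in your final sentence, and you are right that one must also check that $a\mapsto a-z$ descends to $\widetilde\mc$, a point the paper leaves implicit. Your main route instead promotes the paper's own (unlabelled) remark on basepoint changes to an actual isomorphism $\cinfinito(L)\cong\cinfinito(L^z)$ in $\catcdga$. This buys a stronger conclusion --- the realizations are \emph{isomorphic} simplicial sets, not merely homotopy equivalent --- and it makes the compatibility of $a\mapsto a-z$ with the homotopy relation automatic via Theorem \ref{homotopia}; what the paper's route buys is that it needs no sign bookkeeping and directly exhibits the componentwise matching reused in Theorem \ref{modelo}. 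One caveat on the computation you defer: the shift must be $\gamma(v)=v+\varphi_z(v)$, not $v-\varphi_z(v)$. With the minus sign (copied from the paper's remark, which is itself inconsistent with its own $\gamma_{e^\varepsilon}(x)=x+e$ example) the conjugated differential acquires nonzero scalar terms: for $L=\lib(u)$, $z=u$, $\varphi_u(x)=1$, one finds $\gamma(dy)=\frac{1}{2}(x^2-3x+2)$, whereas $v\mapsto v+\varphi_u(v)$ gives $\frac{1}{2}(x^2+x)$, which is indeed the differential of $\cinfinito(\lib(u)^u)$. With that sign fixed, the scalar part of $\gamma(dv)$ is $\varphi_z(d_1v)+\varphi_z(d_2v)=\varphi_z(dv)=0$ precisely because $\varphi_z$ is an augmentation, the cross terms of the quadratic part produce the dual of $[z,-]$, and the intertwining identity holds as you claim.
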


\begin{proof}
This is a direct consequence of the following facts. On the one hand, and for any $L_\infty$ algebra, it is easy to check \cite[Lemma 4.8]{ber} that $\mc(L^z)=\{a-z,\,a\in\mc(L)\}$. On the other hand, a trivial computation shows that, whenever $L$ is a DGL, $(L^z)^{a-z}=L^a$. The proposition follows at once from Theorem \ref{realizacion}.
\end{proof}

With this in mind, let $X$ be a space with path components $\{Y,X_j\}_{j\in J}$ and let $\{L,L_j\}_{j\in J}$ be a family of non-negatively graded DGL's, each of which  modeling the corresponding component. For each $j\in J$ consider the perturbed DGL $M_j=(\lib(u_j)*L_j)^{u_j}$. That is, $M_j=(\lib(u_j)*L_j,\partial_{u_j})$ in which $\partial_{u_j}(u_j)=\frac{1}{2}[u_j,u_j]$ and $\partial_{u_j}x=\partial_jx+[u_j,x]$, with  $x\in L_j$ and $\partial_j$ the differential in $L_j$. Then we have the following.

\begin{theorem}\label{modelo} The DGL $
M=*_{j\in J}M_j*L
$
is a model of $X$.
\end{theorem}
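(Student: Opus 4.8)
The plan is to identify the homotopy type of $\langle M\rangle$ by computing the homotopy classes of Maurer--Cartan elements of $M$ and the corresponding localizations $M^{(z)}$, and then appealing to Theorem~\ref{realizacion}. Since $M$ is a coproduct of the $M_j$ and $L$, and homology commutes with coproducts, the first step is to understand $\mc(M)$ and $H(M)$. Each $M_j=(\lib(u_j)*L_j)^{u_j}$ is acyclic by Lemma~\ref{lielema}, and $L$ is non-negatively graded with $\mc(L)$ corresponding to the basepoint of $Y$. I expect that $\widetilde\mc(M)=\{0\}\cup\{u_j\}_{j\in J}$, where the class $0$ ``sees'' the component $Y$ and each class $u_j$ ``sees'' the component $X_j$; the key point is that each $M_j$ contributes exactly one homotopy class of Maurer--Cartan elements, namely $u_j$ regarded inside $M$ (note that $u_j$ becomes a genuine, non-gauge-trivial Maurer--Cartan element only because $M_j$ was already perturbed by $u_j$, so that $0\in M_j$ sits where the acyclic part lives). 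One must check carefully that distinct $u_j$ and $u_{j'}$ are not homotopic in $M$, which follows because localizing at $u_j$ kills the whole $M_j$-summand but leaves the others, so the realizations $\langle M^{(u_j)}\rangle$ are distinguished.

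**Key steps, in order.** First, compute $\widetilde\mc(M)$: using that $M=*_{j}M_j*L$, a Maurer--Cartan element of a coproduct of DGL's, after truncation/localization, is controlled componentwise in degree zero, and the non-negatively graded hypothesis forces each homotopy class to be represented by one of $0$ (for the $L$-factor) or $u_j$ (for the $M_j$-factors). Second, for the class $0$, compute $M^{(0)}$: since each $M_j^{u_j}$-type summand is acyclic and $u_j$ lies in it (in the sense that $0\in M_j$ is the ``localizable'' basepoint), localization at $0$ collapses each $M_j$ up to quasi-isomorphism and leaves $L$; more precisely $M^{(0)}\simeq L$ as non-negatively graded DGL's, so $\langle M^{(0)}\rangle\simeq\langle L\rangle\simeq Y$ (rationally). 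Third, for each class $u_j$, compute $M^{(u_j)}$: perturbing $M$ by $u_j$ sends $M_j$ to $(\lib(u_j)*L_j)^{u_j+u_j}$-type expression; the crucial computation is that $(M_j)^{u_j}$, i.e. the perturbation of the already-$u_j$-perturbed algebra by $u_j$ again, is quasi-isomorphic to $\lib(u_j)*L_j$ localized at $0$, which gives back $L_j$, while all the other summands $M_{j'}$ ($j'\neq j$) and $L$ get perturbed to acyclic algebras. Hence $M^{(u_j)}\simeq L_j$ up to the non-negatively graded truncation, so $\langle M^{(u_j)}\rangle\simeq\langle L_j\rangle\simeq X_j$ rationally. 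Fourth, assemble: by Theorem~\ref{realizacion},
$$
\langle M\rangle\simeq\langle M^{(0)}\rangle\,\stackrel{\cdot}{\cup}\,\stackrel{\cdot}{\bigcup_{j\in J}}\langle M^{(u_j)}\rangle\simeq\langle Y\rangle\,\stackrel{\cdot}{\cup}\,\stackrel{\cdot}{\bigcup_{j\in J}}\langle X_j\rangle,
$$
which is the rationalization of $X$.

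**The main obstacle.** The hard part will be the third step: pinning down the homotopy type of $M^{(u_j)}$. Perturbing an already-perturbed DGL by a Maurer--Cartan element lying in the ``acyclic tail'' requires care, because one must show both that the $M_j$-summand is restored to something quasi-isomorphic to $L_j$ and, simultaneously, that every other summand $M_{j'}$ ($j'\ne j$) and $L$ becomes acyclic after this perturbation. For the latter I would reuse the spectral-sequence argument of Lemma~\ref{lielema}: perturbing $L$ (or $M_{j'}$) by $u_j$ introduces the operator $\ad_{u_j}$, and filtering by powers of $\ad_{u_j}$ gives an acyclic $E^1$-page exactly as before. For the former, I would use the identity $(L^z)^{a-z}=L^a$ for DGL's (proved in the excerpt) together with the fact that $0$ is the ``base'' Maurer--Cartan element of $\lib(u_j)*L_j$ whose localization yields $L_j$, as in Proposition~\ref{xpunteado}. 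One must also verify the compatibility of these componentwise computations with the coproduct, i.e. that localizing $M$ at $u_j$ respects the coproduct decomposition up to quasi-isomorphism; this is routine once one observes that $K_{\varphi_{u_j}}$, or equivalently the perturbed differential $\partial_{u_j}$ on $M$, acts on each coproduct summand separately on generators and the interaction terms only increase filtration degree.
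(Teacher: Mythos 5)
Your overall strategy --- compute $\widetilde\mc(M)$, identify each localization $M^{(z)}$ with $L$ or with one of the $L_j$ up to quasi-isomorphism, and conclude by Theorem \ref{realizacion} --- is exactly the paper's. However, there is a concrete error in your identification of the Maurer-Cartan elements contributed by the factors $M_j$, and it breaks your third step as written. The element $u_j$ is \emph{not} a Maurer-Cartan element of $M_j=(\lib(u_j)*L_j)^{u_j}$: the perturbed differential satisfies $\partial_{u_j}(u_j)=+\frac{1}{2}[u_j,u_j]\neq-\frac{1}{2}[u_j,u_j]$, because $[u_j,u_j]\neq 0$ in a free Lie algebra on a degree $-1$ generator. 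What the perturbation does to the Maurer-Cartan set is a shift, $\mc(L^z)=\{a-z,\ a\in\mc(L)\}$; applied to $\lib(u_j)*L_j$, whose Maurer-Cartan classes are $0$ and $u_j$, this yields $\mc(M_j)=\{-u_j,0\}$ up to homotopy. The correct statement is therefore $\widetilde\mc(M)=\{0\}\cup\{-u_j\}_{j\in J}$.

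Consequently your ``double perturbation'' $(M_j)^{u_j}$ is not legitimate: since $u_j$ is not Maurer-Cartan in $M_j$, the operator $\partial_{u_j}+\ad_{u_j}$ does not square to zero, and $(\lib(u_j)*L_j)^{2u_j}$ is not a DGL either, as $2u_j$ is not Maurer-Cartan. The identity $(L^z)^{a-z}=L^a$ that you invoke does repair this, but only with the correct sign: taking $z=u_j$ and $a=0$ gives $(M_j)^{-u_j}=\lib(u_j)*L_j$, whose localization at $0$ is quasi-isomorphic to $L_j$ by (the proof of) Proposition \ref{xpunteado}. With $-u_j$ in place of $u_j$ throughout, the rest of your argument goes through and coincides with the paper's; the only cosmetic difference is that the paper packages the acyclicity of ``everything except $L_j$'' after perturbing by $-u_j$ as a single application of Lemma \ref{lielema}, by rewriting $M^{-u_j}\cong(\lib(u_j)*N)^{u_j}*L_j$ with $N$ the coproduct of the remaining factors, rather than running a separate filtration argument for each summand as you propose.
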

An immediate consequence reads as follows.

\begin{corollary}
$\cinfinito(M)$ is a free CDGA model of X.\hfill$\square$
\end{corollary}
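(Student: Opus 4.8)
The plan is to apply the componentwise decomposition of Theorem~\ref{realizacion} to $M$. By that theorem $\langle M\rangle\simeq\,\stackrel{\cdot}{\cup}_{[z]\in\widetilde\mc(M)}\langle M^{(z)}\rangle$, so it suffices to establish: (i) $\widetilde\mc(M)=\{[0]\}\cup\{[-u_j]\}_{j\in J}$, with these classes pairwise distinct; (ii) $\langle M^{(0)}\rangle\simeq\langle L\rangle$; and (iii) $\langle M^{(-u_j)}\rangle\simeq\langle L_j\rangle$ for each $j\in J$. Granting (i)--(iii), since $L$ models $Y$ and each $L_j$ models $X_j$ we obtain $\langle M\rangle\simeq Y\,\stackrel{\cdot}{\cup}\,\stackrel{\cdot}{\cup}_{j\in J}X_j$, i.e.\ the rationalization of $X$, proving Theorem~\ref{modelo}; the Corollary then follows at once, as $\cinfinito(M)$ is free by construction (Definition~\ref{cocadena}).

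For (i): since $L$ and the $L_j$ are non-negatively graded, $\mc(L)=\mc(L_j)=\{0\}$, hence $\mc(M_j)=\mc(\lib(u_j)*L_j)-u_j\supseteq\{0,-u_j\}$. Writing a general Maurer--Cartan element of the coproduct $M$ and using the non-negative grading of $L$ and the $L_j$ together with the linear independence of the brackets $[u_i,u_{i'}]$ ($i\neq i'$) in degree $-2$, the argument which in the proof of Proposition~\ref{xpunteado} gives $\widetilde\mc(\lib(u)*L)=\{0,u\}$ yields $\widetilde\mc(M)=\{[0]\}\cup\{[-u_j]\}_{j\in J}$. For distinctness, killing every generator of $M$ except $u_j$ defines a mild DGL morphism $\pi_j\colon M\twoheadrightarrow(\lib(u_j))^{u_j}$ in $\catlinfinito$ with $\mc(\pi_j)(-u_j)=-u_j$ and $\mc(\pi_j)(z)=0$ for every other standard Maurer--Cartan element $z$; since $(\lib(u_j))^{u_j}\simeq\lib(u_j)$ models $S^0$, in which $-u_j\not\sim 0$, and $\mc(\pi_j)$ preserves the homotopy relation, no two of the classes in $\widetilde\mc(M)$ coincide.

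For (ii): each $M_j$ is acyclic by Lemma~\ref{lielema}, so (homology preserves coproducts) the coproduct inclusion $L\hookrightarrow M$ induces an isomorphism $H(L)\stackrel{\cong}{\to}H(M)$. As $L$ is non-negatively graded we have $L=L^{(0)}$, and, exactly as in the proof of Proposition~\ref{xpunteado}, the inclusion $L=L^{(0)}\hookrightarrow M^{(0)}$ is then a quasi-isomorphism of non-negatively graded DGL's; hence $\langle M^{(0)}\rangle\simeq\langle L\rangle\simeq Y$. For (iii): regroup $M=M_j*N$ with $N=*_{i\neq j}M_i*L$. Since $(K^w)^{-w}=K$ for any DGL $K$, in the perturbed DGL $M^{-u_j}$ the differential restricts on the factor $M_j$ to the \emph{unperturbed} structure of $\lib(u_j)*L_j$, while on $N$ it is perturbed by $-\ad_{u_j}$ (a twisted coproduct of $\lib(u_j)*L_j$ with $N$). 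Killing the differential ideal $J\subset M^{-u_j}$ generated by the generators of $N$ gives a DGL surjection $q\colon M^{-u_j}\twoheadrightarrow\lib(u_j)*L_j$, a model of $X_j^+$. The crux is that $J$ is acyclic: this is proved by a filtration/spectral-sequence argument in the spirit of Lemma~\ref{lielema} (filtering by word-length in the generators of $N$, respectively by powers of $\ad_{u_j}$), using that each $M_i$, $i\neq j$, is acyclic and that the perturbation term $-\ad_{u_j}$ contributes a contracting operator on the associated-graded complex of $J$, so the spectral sequence collapses and $H(J)=0$. Hence $q$ is a quasi-isomorphism; restricting to the non-negative truncation at the common Maurer--Cartan element $0\in\mc(M^{-u_j})$, the induced map $M^{(-u_j)}=(M^{-u_j})^{(0)}\stackrel{\simeq}{\to}(\lib(u_j)*L_j)^{(0)}$ is a quasi-isomorphism of non-negatively graded DGL's, and since $(\lib(u_j)*L_j)^{(0)}\simeq L_j$ (proof of Proposition~\ref{xpunteado}) we get $\langle M^{(-u_j)}\rangle\simeq\langle L_j\rangle\simeq X_j$.

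The main obstacle is step (iii), namely the acyclicity of $J$: the twisted coproduct $M^{-u_j}$ is genuinely more delicate than the single perturbation treated in Lemma~\ref{lielema}, and the filtration must be chosen so that the perturbation $-\ad_{u_j}$ becomes visible as a homotopy killing $H$ of the associated graded of $J$. A secondary point requiring care is the ``$\subseteq$'' part of the enumeration in (i) --- that \emph{every} Maurer--Cartan element of $M$ is homotopic to a standard one --- and, when $J$ is infinite, the verification that $M$ still lies in the appropriate subcategory of $\catlinfinito$ together with the corresponding colimit argument.
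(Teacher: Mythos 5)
As a proof of the Corollary itself, your opening paragraph already contains the entire argument and it is exactly the paper's: Theorem \ref{modelo} gives $\langle\cinfinito(M)\rangle\simeq X$ rationally, and $\cinfinito(M)$ is a free CDGA by construction (Definition \ref{cocadena}), so nothing more is needed. The issue is that you elect to re-derive Theorem \ref{modelo} rather than quote it, and your re-derivation has a genuine gap at its key step, namely your point (iii). You regroup $M=M_j*N$, pass to the quotient by the differential ideal $J$ generated by the generators of $N$, and reduce everything to the acyclicity of $J$; but that acyclicity is only asserted ``in the spirit of Lemma \ref{lielema}'' via an unspecified filtration in which the perturbation $-\ad_{u_j}$ is supposed to produce a contracting homotopy on the associated graded. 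You flag this yourself as the main obstacle, and rightly so: Lemma \ref{lielema} as stated does not cover a twisted coproduct of this shape, and the contracting homotopy is exactly what would have to be constructed. As written, step (iii) is not a proof.

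The paper closes this gap by regrouping in the opposite direction: an easy computation identifies $M^{-u_j}=(\lib(u_j)*N)^{u_j}*L_j$ with $N=*_{i\neq j}M_i*L$, i.e.\ the perturbing generator $u_j$ is absorbed into the factor containing everything \emph{except} $L_j$, which is left unperturbed and split off as a free factor of the coproduct. Lemma \ref{lielema} then applies verbatim to show $(\lib(u_j)*N)^{u_j}$ is acyclic, and since homology preserves coproducts the inclusion $L_j\hookrightarrow M^{(-u_j)}$ is a quasi-isomorphism --- no quotient ideal and no new spectral sequence are needed. I would replace your step (iii) by this decomposition. Your step (ii) coincides with the paper's argument, and your extra care in step (i) (the projections $\pi_j$ onto $(\lib(u_j))^{u_j}$ separating the classes $[0]$ and $[-u_j]$, and the caveats about infinite $J$) usefully fleshes out what the paper dispatches with ``clearly, in light of the proposition above.''
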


\begin{proof}[Proof of Theorem \ref{modelo}] Clearly, in light  of the proposition above and its proof, $$\widetilde\mc(M)=\{0\}\cup\{-u_j\}_{j\in J}.$$
 We prove that $M^{(0)}$ and each $M^{(-u_j)}$ are non-negatively graded DGL's of the same homotopy type of $Y$ and $X_j$ respectively. By Lemma \ref{lielema} each $M_j$ is acyclic. Thus $H(M^{(0)})=H_{\ge 0}(M)\cong H(L)$ and the inclusion $L\stackrel{\simeq}{\hookrightarrow}M^{(0)}$ is a quasi-isomorphism.

On the other hand, an easy computation shows that
$$
M^{-u_j}=(\lib(u_j) * N)^{u_j} * L_j,\qquad\text{with}\qquad N=*_{i\not=j} M_j* L.
$$
Again by Lemma \ref{lielema}, $(\lib(u_j)*N)^{u_j}$ is acyclic and the inclusion $L_j\stackrel{\simeq}{\hookrightarrow}M^{(-u_j)}$ is  a quasi-isomorphism. Theorem \ref{realizacion} finishes the proof.
\end{proof}

\begin{remark} Observe that for any  $L\in\catdgl$, we may consider its decomposition via localization, $\{L^{(z)}\}_{z\in\widetilde\mc(L)}$, and then obtain  $M\in\catdgl$ as in the theorem above. Hence, $\langle L\rangle\simeq\langle M\rangle$. That is, in geometric terms, L is homotopy equivalent to a free (not bounded below!) DGL generated by a vector space concentrated in degrees greater than or equal to $-1$ and with a minimum set of Maurer-Cartan elements, $\widetilde\mc(L)=\mc(M)$.
\end{remark}

\begin{example} (1) {\em Model of a disjoint union of spheres.} Let $X=\stackrel{\cdot}{\cup}_{i\in I}S^{n_i}$ be the disjoint union of spheres with $n_i\ge 1$. Fix $i_0\in I$ and observe that, by Theorem \ref{modelo}, a DGL model of $X$ is
$
(\lib(W),\partial)$ in which $W$ is generated by $\{u_i\}_{i\not=i_0}$ and $\{a_i\}_{i\in I}$ with $|u_i|=-1$, $|a_i|=n_i-1$, and the differential is given by $\partial u_i=\frac{1}{2}[u_i,u_i]$, $\partial a_{i_0}=0$ and $\partial a_i=[a_i,u_i]$ if $i\not=i_0$.

(2) We invite the reader to check that applying Proposition \ref{xpunteado} to  the model for the interval $\lasu_I=(\widehat{\mathbb{L}}(a,z),\partial)$  given in Example \ref{ejemplo}, we recover precisely $\lasu$.
\end{example}

\bigskip
\bigskip
\bigskip

\noindent{\sc Institut de Math\'ematique, Universit\'e Catholique de Louvain , 2 Chemin du Cyclotron B-1348, Louvain-la-Neuve, Belgium}.\hfill\break
{urtzibuijs@gmail.com}

\bigskip

\noindent {\sc Departamento de \'Algebra, Geometr\'{\i}a y Topolog\'{\i}a, Universidad de M\'alaga, Ap.\ 59, 29080 M\'alaga, Spain}.\hfill\break {aniceto@uma.es}

\end{document}